\documentclass[12pt]{article}
\overfullrule = 0pt

\usepackage{amssymb,amsmath,amsthm,epsfig}
\usepackage{latexsym, enumerate}
\usepackage{eepic}
\usepackage{epic}
\usepackage{graphicx}
\usepackage{color}
\usepackage{ifpdf}
\usepackage{subfigure}

\usepackage{dsfont}
\usepackage{multirow}
\usepackage{algorithm}

\usepackage{mathrsfs}
\usepackage{dsfont}

\topmargin -0.5in
\textheight 9.0in
\textwidth 6.5in
\oddsidemargin 0.0in
\evensidemargin 0.0in

\theoremstyle{plain}

\newtheorem{lem}{Lemma}[section]
\newtheorem{thm}[lem]{Theorem}


\theoremstyle{definition}

\theoremstyle{remark}

\begin{document}
\title{ \large\bf Local-global model reduction method  for stochastic optimal control problems constrained by partial differential equations}
\author{Lingling Ma\thanks{College of Mathematics and Econometrics, Hunan University,
Changsha 410082, China. Email: hudalingling@126.com}
\and
Qiuqi Li\thanks{College of Mathematics and Econometrics, Hunan University, Changsha 410082, China. Email:qiuqili@hnu.edu.cn.}
\and
Lijian Jiang\thanks{Institute  of Mathematics, Hunan University, Changsha 410082, China. Email: ljjiang@hnu.edu.cn. Corresponding author}
}

\date{}
\maketitle
\begin{center}{\bf ABSTRACT}
\end{center}\smallskip
In this paper, a local-global model reduction method is presented  to solve stochastic optimal control problems governed by partial
differential equations (PDEs).  If the optimal control problems involve uncertainty,  we need to use a few random variables to
parameterize the uncertainty.  The stochastic optimal control problems require solving coupled optimality system for
a large number of samples in the stochastic space to  quantify the statistics of the system response and explore the uncertainty quantification.
Thus the computation is prohibitively expensive.  To overcome the difficulty, model reduction is necessary to significantly reduce the computation complexity.
We exploit the advantages from both reduced basis method and Generalized Multiscale Finite Element Method (GMsFEM) and develop the local-global model reduction method
for stochastic optimal control problems with PDE constraints.   This local-global model reduction can  achieve much more computation  efficiency than using only local model reduction approach and only global model
reduction approach.  We recast the stochastic optimal problems in the framework of saddle-point problems and analyze the existence and uniqueness of the optimal solutions of the reduced model.
In the local-global approach, most of computation steps are independent of each other.  This is very desirable for scientific computation. Moreover, the online computation for each random sample
is very fast via the proposed model reduction method.  This allows us to compute the  optimality system for a large number of samples.
To demonstrate the performance of the local-global model reduction method,
 a few numerical examples are provided for different stochastic optimal control problems.

\smallskip
{\bf keywords:}
stochastic optimal control problem, reduced basis method, generalized multiscale finite element method,
local-global model reduction

\section{Introduction}
Optimal control problems are often constrained  with partial differential equations (PDEs) when modeling
physical processes in  sciences and engineering.   For problems that involve uncertainty,  stochastic information needs
to be incorporated into the control problems.  This leads to stochastic optimal control problems.  To
characterize the uncertainty,  we often use random variables to parameterize the stochastic functions.
 In practical applications, uncertainties may arise  from various
sources such as the PDE coefficients, boundary conditions, external loadings and shape of physical
domain. The uncertainty  may have significant impact on the optimal solution. For deterministic optimal control problems,
mathematical theories and computational methods have been developed and investigated for many years
(see, e.g. \cite{gl96, jl71, rdw10}), while the development of stochastic optimal control problem governed
by stochastic PDE have gained substantial attention from  the last decades \cite{cqr13, gll11, hlm11, ll13, rw12}.

In this paper, we  consider the stochastic optimal control problems with quadratic cost functional
constrained by stochastic PDEs. For PDE-constrained optimization problems, there is a choice of
whether to discretize-then-optimize or optimize-then-discretize, and there are different opinions regarding
which route to take (see, e.g. \cite{ch02, p13} for more  discussion).   We choose to optimize-discretize-then-reduce approach in this work. Existence and uniqueness of an optimal solution to stochastic optimal control problems
was studied  in \cite{fb74,hlm11, ll13, nrmq13} in the framework of traditional FE method. It is known that  the
numerical solution of PDE-constrained optimization problems is  computationally expensive because it
requires the solutions of a system of PDEs arising from the optimality conditions: the state problem, the adjoint problem and a set of equations ensuring the optimality of the solution. Especially in the many-query context for parameterized PDEs, this computation  becomes much more challenging and may  lead to the ``curse-of-dimensionality" in high dimensional stochastic spaces, which
makes numerical computation very extensive.

There exist  some efficient methods to solve the stochastic optimal control problems. The Monte
Carlo method is one of the most straightforward  schemes to get the approximate  optimal solution in the
stochastic space. However, it is to be blamed for its low convergence rate, thus leading to heavy computational
cost when a full deterministic optimal control problem has to be solved for every sample. Stochastic Galerkin method
has been proved to converge exponentially fast for smooth problems \cite{btz05, hlm11}. Unfortunately, the tensor-product projection scheme produces a large-scale tensor system to be solved, bringing further computational difficulties. The works \cite{cqr13, khr13, rw12} made use of the stochastic collocation method, which can avoid the  tensor-product large algebraic system encountered by the stochastic Galerkin method. However, when
the optimality system is  in high dimensional stochastic spaces, these techniques are needed to solve the optimality system for many times corresponding to the collocation nodes.
To overcome these issues,  model order reduction methods are necessary to solve large-scale stochastic optimal control problems in high dimensional stochastic spaces.
 Roughly speaking, there exist two categories for model order reduction.  One is global model order reduction such as proper orthogonal decomposition and reduce basis (RB) method \cite{gmnp07}.  The other is local model order reduction such as
 coarsen finite element methods and sparse basis methods \cite{hw97, jl17}.   The global model reduction method involves solving a few global problems.  The local model reduction method may still
 have a large number of degree of freedoms and depend on the random parameters.   To exploit the advantages from both local model reduction and global model reduction,  we present a local-global model reduction
 approach to solve the stochastic optimal control problems in the paper.  The local-global model reduction using POD and GMsFEM has been used in flows in heterogeneous porous media \cite{ac15}.
 Our local-global model reduction here is based on reduced basis method and GMsFEM to solve stochastic optimal control problems.

Inspired by \cite{nw14}, we leverage  inexpensive low-fidelity models to provide important  information abut the high-fidelity model outputs and substantially reduce computation complexity.
We build  a low-fidelity model based on a RB method. RB method is one of global model order reduction methods and it allows to recast a
computational demanding problem (``truth" problem)  into a  the reduced problem \cite{bmppg12, chmr10, gk11, qmn15, rhp08} with fast and reliable low-dimensional formulation.
 The main features of RB method \cite{qmn15, rhp08} are: (i) a posterior bound error estimation for choosing some optimal parameters; (ii) a fast  convergent global approximation onto snapshot spaces;  (iii) an offline-online procedure, yielding the RB functions in the offline stage and obtaining the online calculation for each new input parameter with inexpensive cost.

For the RB method, we can use FE method in a fine grid to get the accurate snapshot functions. For the computation of snapshot functions in large-scale or  multiscale models,
the number of degree of freedoms  may be very large  to resolve all scales in fine grid.  Thus, the computation for snapshots may be very expensive.
 So it is desirable  to develop an  inexpensive low-fidelity model to obtain  the snapshot functions.  As a local model reduction  method, Multiscale finite element method (MsFEM)  \cite{hw97, jem10} is an efficient method to achieve a good trade-off between accuracy and efficiency.  The main idea is to divide the fine scale problem into many local problems and use the solutions of the local problems to form a coarse scale model \cite{jem10}.  MsFEM  incorporates the small-scale information to multiscale basis functions and capture the impact of small-scale features on the coarse-scale through a variational formulation. One of the most important features for MsFEM is that the multisacle basis functions can be computed in the offline stage  and used repeatedly for the
model with different source terms, boundary conditions and the  coefficients with similar
multiscale structures \cite{et09}. To capture complex heterogeneities and continuum scales in the models,
one may need to use multiple basis functions per coarse block. To this end, Generalized Multiscale Finite Element Method (GMsFEM) \cite{egh13,egw11} have been developed  in the framework of generalized finite element method \cite{bbo04, mb96}. This approach extends  MsFEM and constructs multiscale basis functions in each coarse element via local spectral problem. In each coarse element, the number  of multiscale basis functions is much less than the  number of fine-scale basis functions.

It is crucial  to select the parameter
samples with an optimal strategy when applying reduce basis method to solve stochastic optimal control problems.  We will adopt the greedy algorithm \cite{lgm96} to choose optimal samples for snapshots in this paper. We
can also use other methods to construct snapshots such as proper orthogonal decomposition and  cross-validation \cite{jl16}.
In Fig.~\ref{Rb-schema}, we describe the schema to get the reduced order model using the proposed  local-global
model reduction  method.
\begin{figure}[!h]
\centering
\includegraphics[width=4.8in, height=3.2in,angle=0.3]{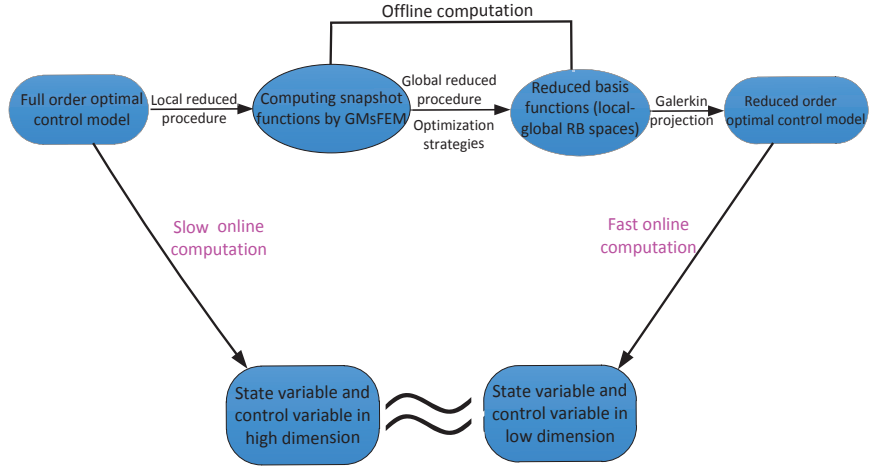}
\caption{\em{Schema of local-global model reduction  for stochastic optimal control problems}}
\label{Rb-schema}
\end{figure}

The paper is structured as follows. In Section \ref{sect2}, we present some preliminaries and notations, the stochastic optimal control problems, and the global existence and uniqueness of optimal solutions. In Section \ref{FEM-va}, FE approximation for the optimal control problems is discussed. Section \ref{sect4} is to present the construction of the global RB method and the equivalent reduced saddle point system
and the local model reduction  method via GMsFEM.  In section \ref{sect5}, we address  the local-global model reduction method and greedy sampling method used for constructing the optimal RB functions.
In Section \ref{sect6},  we present a few numerical examples  for different stochastic optimal control problems to  illustrate the local-global model reduction method. Finally, some conclusions and comments are outlined in the last section.


\section{Stochastic optimal control problems}
\label{sect2}
In this subsection, we first introduce some notations used for the rest of paper and then present
the stochastic optimal control problems. Then, we will recast the original optimal control problems
into the framework of the saddle-point problem and consider the well-posedeness of the saddle-point system.

\subsection{Preliminaries and notations}
Let $(\mathcal{D},\mathcal{F},\mathcal{P})$ be a complete probability space. Here $\mathcal{D}$ is a set of outcomes $\omega \in \mathcal{D}$, $\mathcal{F} \subset 2^{\mathcal{D}}$ is the $\sigma$-algebra of events, and $\mathcal{P}: \mathcal{F} \rightarrow [0,1]$ is a probability measure. We assume that $\Omega$ is a convex bounded polygonal domain in $\mathbb{R}^{d}~(d\geq 1)$ with Lipschitz continuous boundary $\partial \Omega$. Let $v : \Omega \times \mathcal{D} \rightarrow \mathbb{R}$ represent a real-valued random field, i.e., real-valued random function defined in $\mathcal{D}$. For computation, the random field is usually approximated using a prescribed a finite number of random variables,  $\mu(\omega)=\{\mu_1, \mu_2,\cdots, \mu_m\}$, i.e., $\mu(\cdot): \mathcal{D}\rightarrow \Gamma \subset \mathbb{R}^{m}~(m\geq1)$. Thus, we use the random vector $\mu(\omega)$ to characterize the stochastic property of the random field.  Let $\rho$ be the joint probability density function of $\mu(\omega)$.

We define the following tensor-product Hilbert space
\begin{equation*}
\begin{split}
  H^{s}(\Omega)\otimes L^{2}_{\rho} (\Gamma) :=\bigg\{v : \Omega \times \Gamma\rightarrow
\mathbb{R}| v(\cdot,\mu)\in H^{s}(\Omega)~\text{and}~ \|v(\cdot, \mu)\|_{H^{s}(\Omega)}
\in L^{2}_{\rho} (\Gamma), \forall \mu \in \Gamma\bigg\}.
\end{split}
\end{equation*}
We denote $\mathscr{H}^{s}(\Omega):=  H^{s}(\Omega)\otimes L^{2}_{\rho} (\Gamma) $ to shorten the  notation,
and equip it with the following norm
\[\|v\|_{\mathscr{H}^{s}(\Omega)} := \bigg(\int_{\Gamma} \|v(\cdot, \mu)\|^{2}_{H^{s}(\Omega)}\rho d\mu\bigg)^{1/2}.\]
In particular, $\mathscr{H}^{s}_{0}(\Omega)= \{ v\in \mathscr{H}^{s}(\Omega): v |_{\partial \Omega}=0\}$.   When $s=0$, we employ the abbreviated notion $\mathscr{L}^{2}(\Omega)$  to denote  $\mathscr{H}^{0}(\Omega)$ by convention.

\subsection{Problem definition}
 For a stochastic optimal control problem, the  aim is to choose the control function $f(x,\mu(\omega))$ in such a way that the corresponding state function $u(x,\mu(\omega))$ is the best possible approximation to a desired state function $\hat{u}(x,\mu(\omega))$. The stochastic optimal control  problem leads to minimizing an  objective functional with some constraints.
 In the paper, we focus on the following optimal control problem
\begin{equation}
\label{cost-functional}
 \min\limits_{\substack {u\in \mathscr{H}^{1}(\Omega)\\ f \in \mathscr{L}^{2}(\Omega)}} J(u,f)=\frac{1}{2}\|u(x,\mu(\omega))-\hat{u}(x,\mu(\omega))\|
 ^2_{\mathscr{L}^2(\Omega)}+\beta\|f(x,\mu(\omega))\|^2_{\mathscr{L}^2(\Omega)}
\end{equation}
constrained by a stochastic PDE with the weak formulation
\begin{equation}
\label{constraint}
a(u,v;\mu(\omega)) = (f,v;\mu(\omega))~~ \forall v \in \mathscr{H}^{1}_{0}(\Omega),
\end{equation}
subject to the  Dirichlet boundary condition $u|_{\partial \Omega}=g(x)$. Here $J(u,f): \Omega \times \mathcal{D} \rightarrow \mathbb{R}$ is a cost-functional and $\beta$ is a positive constant. In the model problem, we assume that the state  $u(x,\mu(\omega))$ and the control  $f(x,\mu(\omega))$ are random fields represented by the  random vector  $\mu$.

The second  term  in (\ref{cost-functional}) has a regularizing effect and is  called a Tikhonov regularization. The $\beta$ is called a regularization parameter, which  counteracts the tendency of the control to become locally unbounded and the cost functional $J(u,f)$ to approach its minimum.
 For the existence and uniqueness of the solution of the constraint (\ref{constraint}), we  assume that the bilinear form $a(\cdot,\cdot;\mu)$ is coercive and bounded over $H^{1}(\Omega)$ for any $\mu\in \Gamma$, i.e., there exist constants $\tilde{\alpha}_{0}, \tilde{\alpha}_{1} > 0$ such that
\[\tilde{\alpha}(\mu) = \inf\limits_{v\in H^{1}(\Omega)}\frac{a(v,v;\mu)}{\|v\|^{2}_{H^{1}(\Omega)}}\geq \tilde{\alpha}_{0},\quad \forall \mu \in \Gamma,\]
and
\[a(u,v;\mu)\leq \tilde{\alpha}_{1}\|u\|_{H^{1}(\Omega)}\|v\|_{H^{1}(\Omega)},\quad \forall \mu \in \Gamma,\]
for all $u, v\in H^{1}(\Omega)$.\\
\subsection{Saddle-point formulation for optimal control problem}
\label{sad-sys}
In this subsection we introduce the variational formulation of the distributed stochastic optimal control problem. We apply Lagrangian approach \cite{ft10} for the derivation of the stochastic optimality system for the optimal control problem (\ref{cost-functional}-\ref{constraint}). We define  the following stochastic Lagrangian functional as
\begin{equation}
\label{Lagran-fun}
\mathcal{L}(u,f,\lambda)= J(u,f)+a(u,\lambda;\mu)-(f,\lambda;\mu),
\end{equation}
where $\lambda \in \mathscr{H}^{1}_{0}(\Omega)$ is the Lagrangian parameter or adjoint variable.
By taking the Fr$\acute{e}$chet derivative of the Lagrangian functional (\ref{Lagran-fun}) with respect to the variables
$u, f, \lambda$ and evaluating  at $\tilde{u}, \tilde{f}, \tilde{\lambda}$, we can get the first order necessary optimality conditions \cite{cqr13, cq14} of stochastic control problem (\ref{cost-functional}-\ref{constraint}), i.e.,
\begin{equation}
\label{optimal-system1}
\begin{cases}
\begin{split}
a(u,\widetilde{u};\mu)&=(f,\widetilde{u};\mu) ~\forall \widetilde{u} \in \mathscr{H}^{1}_{0}(\Omega) ~~ (\text{state equation}),\\
a'(\lambda,\widetilde{\lambda};\mu)&=-(u-\hat{u},\widetilde{\lambda};\mu) ~\forall \widetilde{\lambda} \in \mathscr{H}^{1}_{0}(\Omega)~~(\text{adjoint equation}),\\
2\beta(f,\widetilde{f};\mu)&=(\widetilde{f},\lambda;\mu) ~\forall \widetilde{f} \in \mathscr{L}^{2}(\Omega)~~(\text {gradient equation}),
\end{split}
\end{cases}
\end{equation}
where $a'(\lambda,\widetilde{\lambda};\mu)= a(\widetilde{\lambda},\lambda;\mu)$ is the adjoint bilinear form, and $(\cdot,\cdot;\mu)$ represents the general $L^{2}$ inner product.

As shown in \cite{gll11, hlm11, rw12}, the optimality system (\ref{optimal-system1}) only has local optimal solutions.
To demonstrate  the global existence and uniqueness of the optimal solution, we will derive a stochastic saddle point
formulation of the optimal control problem (\ref{cost-functional}-\ref{constraint}).

First of all, we introduce the new variables $\underline{u}=(u,f) \in \mathscr{U}$ and $\underline{v}=(v,h)\in \mathscr{U}$, where the tensor space $\mathscr{U}=\mathscr{H}^{1}(\Omega)\times \mathscr{L}^{2}(\Omega)$ is
equipped with graph norm $\|\underline{u}\|_{\mathscr{U}}= \|u\|_{\mathscr{H}^{1}(\Omega)}+\|f\|_{\mathscr{L}^{2}(\Omega)}$.
We define the bilinear forms $\mathcal{A}(\cdot,\cdot): \mathscr{U}\times \mathscr{U}\rightarrow \mathbb{R}$ and $\mathcal{B}(\cdot,\cdot): \mathscr{U}\times \mathscr{H}^{1}_{0}(\Omega)\rightarrow \mathbb{R}$ by
\begin{equation}
\label{bilinear-forms}
\begin{cases}
\begin{split}
\mathcal{A}(\underline{u},\underline{v})&:=(u,v)+2\beta(f,h;\mu),\\
\mathcal{B}(\underline{u},q)&:=a(u,q;\mu)-(f,q;\mu),\\
\end{split}
\end{cases}
\end{equation}
respectively. With the new definitions, we have the following minimization problem equivalent to the original optimal problem  (\ref{cost-functional}-\ref{constraint}), that is
\begin{equation}
\label{mini-formu}
\begin{cases}
\begin{split}
\min\limits_{\underline{u}\in \mathscr{U}_{ad}}\mathcal{J}(\underline{u})&=\frac{1}{2}\mathcal{A}(\underline{u},\underline{u})-(\underline{\hat{u}},
\underline{u}),\\
s.t. ~~\mathcal{B}(\underline{u},\widetilde{u})&=(g,\widetilde{u})_{\partial \Omega} \quad \forall \widetilde{u} \in \mathscr{H}^{1}_{0}(\Omega),
\end{split}
\end{cases}
\end{equation}
where $\underline{\hat{u}}=(\hat{u},0)$, $(\underline{\hat{u}}, \underline{u})=(\hat{u},u)$ and $\mathscr{U}_{ad} \subset \mathscr{U}$ is the admissible set. Moreover, the equivalent saddle point problem for (\ref{mini-formu}) is to find: $(\underline{u}, \lambda) \in \mathscr{U}\times \mathscr{H}^{1}(\Omega)$ such that
\begin{equation}
\label{optimal-system2}
\begin{cases}
\begin{split}
\mathcal{A}(\underline{u},\underline{v})+\mathcal{B}(\underline{v},\lambda)&=(\underline{\hat{u}},\underline{v})\quad \forall \underline{v} \in \mathscr{U},\\
\mathcal{B}(\underline{u},\widetilde{u})&=(g,\widetilde{u})_{\partial \Omega} \quad \forall \widetilde{u} \in \mathscr{H}^{1}_{0}(\Omega).
\end{split}
\end{cases}
\end{equation}
\begin{lem}
\label{equivalence}
Let $\mathscr{U}_{0}:= \{\underline{u}\in \mathscr{U}: \mathcal{B}(\underline{u}, \widetilde{u})=0~~\forall \widetilde{u}\in \mathscr{H}^{1}_{0}(\Omega)\}$ be the kernel space of bilinear form $\mathcal{B}(\cdot, \cdot)$. Then the minimization problem (\ref{mini-formu}) is equivalent to the saddle point formulation (\ref{optimal-system2}). Furthermore, the original minimization (\ref{cost-functional}-\ref{constraint}) and (\ref{optimal-system1}) is also equivalent.
\end{lem}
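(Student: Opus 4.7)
The plan is to establish the two equivalences by standard saddle-point/Lagrange-multiplier reasoning, relying on the assumed coercivity and continuity of $a(\cdot,\cdot;\mu)$.

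First, I would verify that \eqref{mini-formu} is just a rewriting of \eqref{cost-functional}--\eqref{constraint}. By plugging the definitions \eqref{bilinear-forms} into $\mathcal{J}(\underline{u})=\tfrac{1}{2}\mathcal{A}(\underline{u},\underline{u})-(\underline{\hat u},\underline{u})$, one recovers $\tfrac{1}{2}\|u\|_{\mathscr{L}^2}^2+\beta\|f\|_{\mathscr{L}^2}^2-(\hat u,u)$, which differs from $J(u,f)$ only by the constant $\tfrac{1}{2}\|\hat u\|_{\mathscr{L}^2}^2$. Likewise, $\mathcal{B}(\underline{u},\widetilde u)=(g,\widetilde u)_{\partial\Omega}$ for all $\widetilde u\in\mathscr{H}^1_0(\Omega)$ is precisely the weak form \eqref{constraint} together with the incorporation of the Dirichlet datum. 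So \eqref{mini-formu} and \eqref{cost-functional}--\eqref{constraint} share the same minimizers.

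Next, for the equivalence between \eqref{mini-formu} and the saddle-point system \eqref{optimal-system2}, I would invoke the Brezzi framework. The bilinear form $\mathcal{A}$ is manifestly symmetric and, because $(u,v)=\|u\|_{\mathscr{L}^2}^2$ restricted to the kernel $\mathscr{U}_0$ dominates $\|u\|_{\mathscr{H}^1}^2$ via the constraint equation (Poincar\'e plus the coercivity of $a$), $\mathcal{A}$ is coercive on $\mathscr{U}_0$. The continuity of $\mathcal{A}$ and $\mathcal{B}$ on $\mathscr{U}\times\mathscr{U}$ and $\mathscr{U}\times\mathscr{H}^1_0(\Omega)$, respectively, is immediate from Cauchy--Schwarz and the boundedness of $a$. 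The crucial ingredient is the inf-sup condition for $\mathcal{B}$: given $q\in\mathscr{H}^1_0(\Omega)$, the choice $\underline{v}=(0,-q)\in\mathscr{U}$ yields $\mathcal{B}(\underline{v},q)=(q,q;\mu)=\|q\|_{\mathscr{L}^2}^2$, while $\|\underline{v}\|_{\mathscr{U}}=\|q\|_{\mathscr{L}^2}$, so one obtains a uniform lower bound on $\sup_{\underline v}\mathcal{B}(\underline v,q)/\|\underline v\|_{\mathscr{U}}$ in the $\mathscr{L}^2$-norm; combined with coercivity of $a$ and the Poincar\'e inequality one upgrades this to an inf-sup bound in the $\mathscr{H}^1$-norm on $q$. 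With these Brezzi hypotheses satisfied, there exists a unique saddle point $(\underline{u},\lambda)\in\mathscr{U}\times\mathscr{H}^1(\Omega)$ of the associated Lagrangian; the first component $\underline{u}$ is precisely the constrained minimizer of $\mathcal{J}$, and the second is the multiplier $\lambda$. This proves the equivalence of \eqref{mini-formu} and \eqref{optimal-system2}.

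Finally, to close the chain to \eqref{optimal-system1}, I would test the first equation of \eqref{optimal-system2} with $\underline v=(\widetilde u,0)$ to recover the adjoint equation, and with $\underline v=(0,\widetilde f)$ to recover the gradient equation, while the second equation of \eqref{optimal-system2} is the state equation. In that identification the state equation in \eqref{optimal-system1} corresponds to $\mathcal{B}(\underline{u},\widetilde u)=(g,\widetilde u)_{\partial\Omega}$ once one accounts for the boundary data. The upshot is that the three systems are algebraically equivalent, so any solution of one is a solution of the others, and uniqueness transfers across all three formulations.

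The main obstacle will be the inf-sup verification: one must be careful to establish it in the correct norm on $\mathscr{H}^1_0(\Omega)$ rather than merely in $\mathscr{L}^2(\Omega)$, which is where the coercivity assumption $\tilde\alpha(\mu)\ge\tilde\alpha_0$ on $a(\cdot,\cdot;\mu)$ enters decisively. Everything else is bookkeeping once Brezzi's theorem applies.
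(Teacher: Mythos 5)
Your overall route---verifying the Brezzi conditions (symmetry/continuity and kernel-coercivity of $\mathcal{A}$, continuity and inf-sup of $\mathcal{B}$) and then reading off the optimality system---is the same as the paper's, and your kernel-coercivity step matches the paper's argument (on $\mathscr{U}_{0}$ the constraint gives $\|u\|_{\mathscr{H}^{1}(\Omega)}\leq C_{1}\|f\|_{\mathscr{L}^{2}(\Omega)}$, so $\mathcal{A}(\underline{u},\underline{u})$ controls $\|\underline{u}\|^{2}_{\mathscr{U}}$). The genuine gap is in your inf-sup verification. The choice $\underline{v}=(0,-q)$ gives $\mathcal{B}(\underline{v},q)=\|q\|^{2}_{\mathscr{L}^{2}(\Omega)}$ and $\|\underline{v}\|_{\mathscr{U}}=\|q\|_{\mathscr{L}^{2}(\Omega)}$, hence only $\sup_{\underline{v}}\mathcal{B}(\underline{v},q)/\|\underline{v}\|_{\mathscr{U}}\geq \|q\|_{\mathscr{L}^{2}(\Omega)}$. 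This cannot be ``upgraded'' to a bound by $\|q\|_{\mathscr{H}^{1}(\Omega)}$ using the Poincar\'e inequality: Poincar\'e reads $\|q\|_{\mathscr{L}^{2}(\Omega)}\leq C_{P}\|\nabla q\|_{\mathscr{L}^{2}(\Omega)}$, i.e.\ it goes the wrong way, and a function may have small $\mathscr{L}^{2}$-norm while its $\mathscr{H}^{1}$-norm is arbitrarily large. Since the multiplier space here is $\mathscr{H}^{1}_{0}(\Omega)$ with its $\mathscr{H}^{1}$ topology, the bound you obtain is strictly weaker than the inf-sup condition the Brezzi theory requires, so as written this step fails.

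The paper closes exactly this step by using the state slot instead of the control slot: because the test function $\widetilde{u}\in\mathscr{H}^{1}_{0}(\Omega)$ also lies in the state space, one takes $\underline{v}=(\widetilde{u},0)$ and invokes the coercivity of $a(\cdot,\cdot;\mu)$, obtaining $\mathcal{B}((\widetilde{u},0),\widetilde{u})=a(\widetilde{u},\widetilde{u};\mu)\geq \tilde{\alpha}(\mu)\|\widetilde{u}\|^{2}_{\mathscr{H}^{1}(\Omega)}$ with $\|(\widetilde{u},0)\|_{\mathscr{U}}=\|\widetilde{u}\|_{\mathscr{H}^{1}(\Omega)}$, which yields the inf-sup constant $\tilde{\alpha}(\mu)\geq\tilde{\alpha}_{0}$ directly in the correct norm (the choice $\underline{v}=(\widetilde{u},-\widetilde{u})$ would also work, with constant $\tilde{\alpha}_{0}/2$). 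With that correction the remainder of your argument---the identification of (\ref{mini-formu}) with (\ref{cost-functional})--(\ref{constraint}) up to the constant $\tfrac{1}{2}\|\hat{u}\|^{2}_{\mathscr{L}^{2}(\Omega)}$, and the recovery of the state, adjoint and gradient equations by testing (\ref{optimal-system2}) with $(\widetilde{u},0)$ and $(0,\widetilde{f})$---goes through and coincides with the paper's Lemma \ref{equivalence} and the lemma following it.
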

\begin{proof}
To prove the equivalence, we need to verify the continuity and coercivity properties of $\mathcal{A}(\cdot,\cdot)$ and the inf-sup condition for $\mathcal{B}(\cdot,\cdot)$ \cite{bg09,bf91}. By the definition of bilinear form $\mathcal{A}(\cdot,\cdot)$ in (\ref{bilinear-forms}), we have $\mathcal{A}(\underline{u},\underline{v})
= \mathcal{A}(\underline{v},\underline{u})$ and $\mathcal{A}(\underline{u},\underline{u})\geq 0$. So
$\mathcal{A}$ is symmetric and nonnegative. Because
\begin{equation*}
\begin{split}
\mathcal{A}(\underline{u},\underline{v})&\leq \|u\|_{\mathscr{L}^{2}(\Omega)}\|v\|_{\mathscr{L}^{2}(\Omega)}+\gamma
\|f\|_{\mathscr{L}^{2}(\Omega)}\|h\|_{\mathscr{L}^{2}(\Omega)}\\
&\leq \|u\|_{\mathscr{H}^{1}(\Omega)}\|v\|_{\mathscr{H}^{1}(\Omega)}+\gamma
\|f\|_{\mathscr{L}^{2}(\Omega)}\|h\|_{\mathscr{L}^{2}(\Omega)}\\
&\leq \gamma_{a}\|\underline{u}\|_{\mathscr{U}}\|\underline{v}\|_{\mathscr{U}},
\end{split}
\end{equation*}
$\mathcal{A}(\cdot,\cdot)$ is continuous on $\mathscr{U}\times \mathscr{U}$ and $\gamma_{a}$ is a constant depending on $\gamma$. By the definition of kernel space $\mathscr{U}_{0}$, we have $\mathcal{B}(\underline{u},\widetilde{u})=0$, that is $a(u,\widetilde{u};\mu)=(f,\widetilde{u};\mu),~\forall \widetilde{u} \in \mathscr{H}^{1}_{0}(\Omega)$. Hence, by the coercivity property of $a(\cdot,\cdot;\mu)$ and Cauchy-Schwarz inequality, it holds that $\|u\|_{\mathscr{H}^{1}(\Omega)} \leq C_{1} \|f\|_{\mathscr{L}^{2}(\Omega)}$ with appropriate constant $C_{1}$. Then the coercivity of $\mathcal{A}(\cdot,\cdot)$ follows,
\begin{equation*}
\begin{split}
\mathcal{A}(\underline{u},\underline{u})&=\|u\|^{2}_{\mathscr{L}^{2}(\Omega)}+ \beta \|f\|^{2}_{\mathscr{L}^{2}(\Omega)}\\
&\geq\|u\|^{2}_{\mathscr{L}^{2}(\Omega)}+\frac{\beta}{2C_{1}}\|u\|^{2}_{\mathscr{H}^{1}(\Omega)}+\frac{\beta}{2} \|f\|^{2}_{\mathscr{L}^{2}(\Omega)}\\
&\geq \gamma_{b} \|\underline{u}\|^{2}_{\mathscr{U}},
\end{split}
\end{equation*}
where the coefficient $\gamma_{b}=\max\{\frac{\beta}{2C_{1}},\frac{\beta}{2}\}$.

Next, by the definition of $\mathcal{B}(\cdot,\cdot)$ and the continuity property of bilinear form $a(\cdot,\cdot;\mu)$, we have
\begin{equation*}
\begin{split}
\mathcal{B}(\underline{u},\widetilde{u})&\leq \tilde{\alpha}_{1}\|u\|_{\mathscr{H}^{1}(\Omega)}\|\widetilde{u}\|_{\mathscr{H}^{1}(\Omega)}+
\|f\|_{\mathscr{L}^{2}(\Omega)}\|\widetilde{u}\|_{\mathscr{L}^{2}(\Omega)}\\
&\leq \max \{\tilde{\alpha}_{1},1\}\|\underline{u}\|_{\mathscr{U}}\|\widetilde{u}\|_{\mathscr{H}^{1}(\Omega)},
\end{split}
\end{equation*}
where $\tilde{\alpha}_{1}$ is the continuity constant of the bilinear form $a(\cdot,\cdot;\mu)$.

Finally, we exploit the fact that state variable and adjoint variable belong to the same space
$\mathscr{H}^{1}(\Omega)$ and the coercivity property of the bilinear form $a(\cdot,\cdot;\mu)$. The inf-sup condition of $\mathcal{B}(\cdot,\cdot)$ on $\mathscr{U}\times\mathscr{H}^{1}(\Omega)$ follows
\begin{equation*}
\begin{split}
\sup\limits_{0\neq\underline{u}\in \mathscr{U}} \frac{\mathcal{B}(\underline{u},\widetilde{u})}{\|\underline{u}\|_{\mathscr{U}}}
&=\sup\limits_{0\neq(u,h)\in \mathscr{H}^{1}(\Omega)\times\mathscr{L}^{2}(\Omega)}\frac{a(u,\widetilde{u};\mu)-(h,\widetilde{u};\mu)}
{\|u\|_{\mathscr{H}^{1}(\Omega)}+\|h\|_{\mathscr{L}^{2}(\Omega)}}\\
&\geq \sup\limits_{0\neq(u,0)\in \mathscr{H}^{1}(\Omega)\times\mathscr{L}^{2}(\Omega)}\frac{a(u,\widetilde{u};\mu)}
{\|u\|_{\mathscr{H}^{1}(\Omega)}}\\
&\geq \tilde{\alpha}(\mu)\|\widetilde{u}\|_{\mathscr{H}^{1}(\Omega)}.
\end{split}
\end{equation*}
Here $\tilde{\alpha}(\mu)$ is the infimum of the bilinear form $a(\cdot,\cdot;\mu)$.\\
\end{proof}
\begin{lem}
The saddle point problem (\ref{optimal-system2}) is equivalent to the stochastic optimality system (\ref{optimal-system1}).
\end{lem}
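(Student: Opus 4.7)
The plan is to show equivalence by splitting the tensor‑product test function $\underline{v}=(v,h)\in\mathscr{U}$ into its two components and reading off the three relations of (\ref{optimal-system1}) one by one, then reversing the argument.

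First I would handle the direction ``saddle point $\Rightarrow$ optimality system.'' Starting from the second line of (\ref{optimal-system2}) and using the definition $\mathcal{B}(\underline{u},\widetilde{u})=a(u,\widetilde{u};\mu)-(f,\widetilde{u};\mu)$, the test functions $\widetilde{u}\in\mathscr{H}^{1}_{0}(\Omega)$ kill the boundary term and produce exactly the state equation. Then, in the first line of (\ref{optimal-system2}), I would specialize to $\underline{v}=(v,0)$ with $v\in\mathscr{H}^{1}_{0}(\Omega)$: the $\mathcal{A}$ term reduces to $(u,v)$, the $\mathcal{B}$ term reduces to $a(v,\lambda;\mu)$, and the right‑hand side reduces to $(\hat u,v)$, yielding
\[
a(v,\lambda;\mu) = -(u-\hat u,v;\mu).
\]
Using $a'(\lambda,v;\mu):=a(v,\lambda;\mu)$ this is the adjoint equation. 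Next I would specialize to $\underline{v}=(0,h)$ with $h\in\mathscr{L}^{2}(\Omega)$: the $\mathcal{A}$ term becomes $2\beta(f,h;\mu)$, the $\mathcal{B}$ term becomes $-(h,\lambda;\mu)$, and the right‑hand side vanishes because $\underline{\hat u}=(\hat u,0)$. This gives the gradient equation $2\beta(f,h;\mu)=(h,\lambda;\mu)$.

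For the converse direction, assume the three equations of (\ref{optimal-system1}) hold. The state equation, together with the Dirichlet condition $u|_{\partial\Omega}=g$, reproduces the second line of (\ref{optimal-system2}) since the test functions live in $\mathscr{H}^{1}_{0}(\Omega)$. To reproduce the first line, I would take an arbitrary $\underline{v}=(v,h)\in\mathscr{U}$, expand $\mathcal{A}(\underline{u},\underline{v})+\mathcal{B}(\underline{v},\lambda)$ by bilinearity into the two pieces $[(u,v)+a(v,\lambda;\mu)]+[2\beta(f,h;\mu)-(h,\lambda;\mu)]$, and substitute the adjoint and gradient equations with test functions $\widetilde{\lambda}=v$ and $\widetilde{f}=h$ respectively. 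The first bracket collapses to $(\hat u,v)$ and the second to $0$, reconstructing the right‑hand side $(\underline{\hat u},\underline{v})$.

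The main technical subtlety—not really an obstacle, but the point to be careful about—is the role of the boundary data $g$: the trace pairing $(g,\widetilde u)_{\partial\Omega}$ drops out for test functions in $\mathscr{H}^{1}_{0}(\Omega)$, so the Dirichlet condition enters (\ref{optimal-system2}) only through the trial space for $u$, and I must make sure the adjoint and gradient derivations use admissible variations that preserve this condition. Once the test–function splitting is made explicit, the rest is straightforward bilinear bookkeeping, and well‑posedness of each reduced equation is already guaranteed by the inf‑sup and coercivity properties verified in Lemma~\ref{equivalence}.
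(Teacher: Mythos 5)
Your proposal is correct and follows essentially the same route as the paper: the forward direction restricts the test pair $\underline{v}=(v,h)$ to $(v,0)$ and $(0,h)$ to read off the adjoint and gradient equations (with the second line giving the state equation), and the converse adds the adjoint and gradient equations back together, exactly as in the paper's proof. The remark about the boundary pairing and the test-space bookkeeping is a fair observation but does not change the argument.
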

\begin{proof}
Equation (\ref{optimal-system2}) amounts to finding $(u,f,\lambda)\in \mathscr{H}^{1}(\Omega)\times\mathscr{L}^{2}(\Omega)\times\mathscr{H}^{1}_{0}(\Omega)$ such that
\begin{equation}
\label{full-optimal-system}
\begin{cases}
\begin{split}
(u,v)+2\beta(f,h;\mu)+a(v,\lambda;\mu)-(h,\lambda;\mu)&=(\hat{u},v)  \quad \forall v \in \mathscr{H}^{1}(\Omega),~\forall h \in \mathscr{L}^{2}(\Omega) \\
a(u,\widetilde{u};\mu)-(f,\widetilde{u};\mu)&=(g,\widetilde{u})
_{\partial \Omega}\quad \forall \widetilde{u} \in \mathscr{H}^{1}_{0}(\Omega).
\end{split}
\end{cases}
\end{equation}
As we can see, $(\ref{full-optimal-system})_{2}$ coincides with the state equation in (\ref{optimal-system1}).
Furthermore, we can obtain the adjoint equation of system (\ref{optimal-system1}) by taking $h=0$ in $(\ref{full-optimal-system})_{1}$. With $v=0$, we can recover the optimality conditions $(\ref{optimal-system1})_{3}$.
Conversely, $(\ref{full-optimal-system})_{1}$ is retrieved by adding the adjoint equation and gradient equation in (\ref{optimal-system1}).
\end{proof}
\begin{thm}
Provided that assumptions in Lemma \ref{equivalence} are satisfied, we can obtain the global existence of a unique solution to the minimization problem (\ref{optimal-system2}). Furthermore, we have the following estimates:
\begin{equation*}
\begin{split}
\|\underline{u}\|_{\mathscr{U}}\leq \gamma_{1}\|\hat{u}\|_{\mathscr{L}^{2}(\Omega)}+\varrho_{1}\|g\|_{\mathscr{L}^{2}(\partial \Omega)},\\
\|\lambda\|_{\mathscr{U}}\leq
\gamma_{2}\|\hat{u}\|_{\mathscr{L}^{2}(\Omega)}+\varrho_{2}\|g\|_{\mathscr{L}^{2}(\partial \Omega)},\\
\end{split}
\end{equation*}
where $\gamma_{1}$, $\gamma_{2}$, $\varrho_{1}$ and $\varrho_{2}$ are all positive constants.
\end{thm}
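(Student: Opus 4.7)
The plan is to read this as a direct application of the Brezzi theory for saddle-point problems, since Lemma \ref{equivalence} has already verified every abstract hypothesis required. Specifically, the continuity of $\mathcal{A}(\cdot,\cdot)$ on $\mathscr{U}\times\mathscr{U}$, its coercivity on the kernel $\mathscr{U}_0$ with constant $\gamma_b$, the continuity of $\mathcal{B}(\cdot,\cdot)$ on $\mathscr{U}\times\mathscr{H}^1(\Omega)$, and the inf-sup estimate with constant bounded below by $\tilde{\alpha}_0$ are all in hand. An immediate application of the Brezzi theorem then yields the global existence and uniqueness of the solution pair $(\underline{u},\lambda)\in\mathscr{U}\times\mathscr{H}^1(\Omega)$ to (\ref{optimal-system2}).

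For the a priori bounds I would proceed in two stages. First, to bound $\underline{u}$, I would use the surjectivity of $\mathcal{B}$ (equivalent to the inf-sup condition) to select a lifting $\underline{u}_g\in\mathscr{U}$ with $\mathcal{B}(\underline{u}_g,\widetilde{u})=(g,\widetilde{u})_{\partial\Omega}$ and $\|\underline{u}_g\|_{\mathscr{U}}\leq C_g\|g\|_{\mathscr{L}^2(\partial\Omega)}$, where $C_g$ depends on $\tilde{\alpha}_0$ and the trace constant. Writing $\underline{u}=\underline{u}_0+\underline{u}_g$ with $\underline{u}_0\in\mathscr{U}_0$ and testing the first line of (\ref{optimal-system2}) against $\underline{v}=\underline{u}_0$ kills the $\mathcal{B}$-term, so coercivity on the kernel and Cauchy--Schwarz give
\[
\gamma_b\|\underline{u}_0\|_{\mathscr{U}}^2\leq\|\hat{u}\|_{\mathscr{L}^2(\Omega)}\|\underline{u}_0\|_{\mathscr{U}}+\gamma_a\|\underline{u}_g\|_{\mathscr{U}}\|\underline{u}_0\|_{\mathscr{U}}.
\]
Dividing out and invoking the triangle inequality yields the first desired estimate, with $\gamma_1,\varrho_1$ expressed in terms of $\gamma_a,\gamma_b,C_g$.

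Second, for the bound on $\lambda$, I would exploit the inf-sup condition directly. Rearranging the first line of (\ref{optimal-system2}) gives $\mathcal{B}(\underline{v},\lambda)=(\underline{\hat{u}},\underline{v})-\mathcal{A}(\underline{u},\underline{v})$ for every $\underline{v}\in\mathscr{U}$. Dividing by $\|\underline{v}\|_{\mathscr{U}}$, taking the supremum, and applying the inf-sup bound from Lemma \ref{equivalence} on the left with continuity of $\mathcal{A}$ on the right yields
\[
\tilde{\alpha}_0\|\lambda\|_{\mathscr{H}^1(\Omega)}\leq\|\hat{u}\|_{\mathscr{L}^2(\Omega)}+\gamma_a\|\underline{u}\|_{\mathscr{U}}.
\]
Substituting the previously obtained bound on $\|\underline{u}\|_{\mathscr{U}}$ produces the second estimate, identifying $\gamma_2,\varrho_2$ accordingly.

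The main obstacle I expect is the treatment of the inhomogeneous Dirichlet datum $g$ within the abstract saddle-point framework: the functional $\widetilde{u}\mapsto(g,\widetilde{u})_{\partial\Omega}$ must be shown to define a continuous element of the dual of $\mathscr{H}^1(\Omega)$ with norm controlled by $\|g\|_{\mathscr{L}^2(\partial\Omega)}$, and the corresponding lifting must be produced explicitly enough that the constants $\varrho_1,\varrho_2$ are genuinely determined by $\tilde{\alpha}_0$, $\gamma_a$, $\gamma_b$, and the trace constant. Once this bookkeeping is done, the remainder is a routine Brezzi estimate.
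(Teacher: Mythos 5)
Your proposal is correct and follows exactly the route the paper intends: the paper states this theorem without an explicit proof, relying on the Brezzi saddle-point theory (via the references cited in the proof of Lemma \ref{equivalence}) together with the continuity, kernel-coercivity and inf-sup properties verified there, which is precisely what you invoke. Your added derivation of the a priori bounds (lifting of the constraint datum, testing on the kernel, and the inf-sup estimate for $\lambda$) is the standard argument the paper leaves implicit, so nothing further is needed beyond the bookkeeping on the boundary functional that you already flag.
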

Due  to the above equivalence, it is sufficient to compute the solution  of
(\ref{optimal-system2}) or (\ref{optimal-system1}) to solve the original optimal control
problem (\ref{cost-functional}-\ref{constraint}).
\section{FE approximation for optimal control problem}
\label{FEM-va}
Before giving the framework of FE approximation for stochastic optimal control problems, we will make an assumption that both the parametric bilinear form $a(\cdot,\cdot;\mu)$ and the parametric
linear form $(f,\cdot;\mu)$ are affine with respect to $\mu$ , i.e.,
\begin{equation}
\label{affine}
\begin{cases}
\begin{split}
a(u,v;\mu) &=\sum\limits^{Q_{a}}_{q=1} Q^{q}_{a}(\mu)a^{q}(u,v) \quad \forall u,v \in H^{1}(\Omega),~\forall \mu \in \Gamma,\\
(f,v;\mu) &=\sum\limits^{Q_{f}}_{q=1} Q^{q}_{f}(\mu)(f^{q},v)   \quad \forall f^{q}\in L^{2}(\Omega),~v \in H^{1}(\Omega),~\forall \mu \in \Gamma.\\
\end{split}
\end{cases}
\end{equation}

In the above, for $q=1,\cdots,Q_{a}$, each $Q^{q}_{a}(\mu): \Gamma \rightarrow \mathbb{R}$ is a $\mu$-dependent
function and $a^{q}(\cdot,\cdot): H^{1}(\Omega)\times H^{1}(\Omega) \rightarrow \mathbb{R}$ is a symmetric bilinear form independent of $\mu$. Similarly, $Q^{q}_{f}(\mu): \Gamma \rightarrow \mathbb{R}, q=1,\cdots,Q_{f},$ are $\mu$-dependent functions and $(f^{q},\cdot): L^{2}(\Omega)\times H^{1}(\Omega) \rightarrow \mathbb{R}, q=1,\cdots,Q_{f}$,  are continuous functionals independent of $\mu$. The affine assumptions $(\ref{affine})$ will play a crucial computational role in the offline-online computational procedure. For the nonaffine parametric bilinear form $a(\cdot,\cdot;\mu)$ and parametric linear form $(f,\cdot;\mu)$, we can use the Empirical Interpolation Method (EIM), see, e.g. \cite{bmna04, gmnp07, qmn15}, to approximate them  with an affine representation.

Let $\mathcal{T}_{h}$ be a uniform partition of the physical domain $\Omega$. Let $V^{h}(\Omega)\subset H^{1}(\Omega)$ be the FE space on the fine grid $\mathcal{T}_{h}$ and $V^{h}_{0}(\Omega) \subset V^{h}(\Omega)$ with vanish boundary values. We let $N_{h}$ be the number of vertices, $N_{e}$ be the number of elements in the fine mesh and the dimension of FE space be $\mathcal{N}$. Furthermore, we assume that $M_{h}(\Omega)$ is the finite dimensional subspace of $L^{2}(\Omega)$ and $\mathcal{M}_{h}(\Omega)$ is also the finite dimensional subspace of $\mathscr{L}^{2}(\Omega)$ with $\mathcal{M}_{h}(\Omega)= M_{h}(\Omega)\otimes L^{2}_{\rho}(\Gamma)$.

Given any $\mu \in \Gamma$, by applying Galerkin projection of $\mathscr{U}_{h}\otimes\mathscr{V}^{h}_{0}(\Omega) \subset \mathscr{U}\otimes\mathscr{H}^{1}_{0}(\Omega)$, where $\mathscr{U}_{h}:= \mathscr{V}^{h}_{0}(\Omega) \otimes \mathcal{M}_{h}(\Omega)$, we obtain FE formulation of the saddle point problem (\ref{optimal-system2}) as: $(\underline{u}_{h}, \lambda_{h}) \in \mathscr{U}_{h}\times \mathscr{V}^{h}_{0}(\Omega)$ such that
\begin{equation}
\label{opt-sys-fem}
\begin{cases}
\begin{split}
\mathcal{A}(\underline{u}_{h},\underline{v}_{h})+\mathcal{B}(\underline{v}_{h},\lambda_{h})
&=(\underline{\hat{u}},\underline{v}_{h})\quad \forall \underline{v}_{h} \in \mathscr{U}_{h},\\
\mathcal{B}(\underline{u}_{h},\widetilde{u}_{h})&=(g,\widetilde{u}_{h})_{\partial \Omega} \quad \forall \widetilde{u}_{h} \in \mathscr{V}^{h}_{0}(\Omega).
\end{split}
\end{cases}
\end{equation}
Mimicking the proofs in subsection \ref{sad-sys}, we can easily show that $\mathcal{A}(\cdot,\cdot)$ is bounded and coercive. Moreover, the bilinear form $\mathcal{B}(\cdot,\cdot)$ satisfies inf-sup condition.

The  Galerkin formulation of (\ref{opt-sys-fem}) is to find $(u_{h},f_{h},\lambda_{h}) \in \mathscr{V}^{h}_{0}(\Omega)\otimes \mathcal{M}_{h}(\Omega)\otimes \mathscr{V}^{h}_{0}(\Omega)$
such that
\begin{equation}
\label{optimal-system-fem}
\begin{cases}
\begin{split}
a(u_{h},\widetilde{u}_{h};\mu)&=(f_{h},\widetilde{u}_{h};\mu) \quad\forall \widetilde{u}_{h} \in \mathscr{V}^{h}_{0}(\Omega),\\
a(\lambda_{h},\widetilde{\lambda}_{h};\mu)&=-(u_{h}-\hat{u},\widetilde{\lambda}_{h};\mu) \quad \forall \widetilde{\lambda}_{h} \in \mathscr{V}^{h}_{0}(\Omega),\\
2\beta(f_{h},\widetilde{f}_{h};\mu)&=(\widetilde{f}_{h},\lambda_{h};\mu) \quad \forall \widetilde{f}_{h} \in \mathcal{M}_{h}(\Omega),
\end{split}
\end{cases}
\end{equation}
where $\mathscr{V}^{h}_{0}(\Omega):= V^{h}_{0}(\Omega)\otimes L^{2}_{\rho}(\Gamma)$.
If the basis functions of spaces $\mathcal{M}_{h}(\Omega)$ and $V^{h}_{0}(\Omega)$ are denoted by
$\{\phi_{k}\}^{N_{e}}_{k=1}$ and $\{\psi_{i}\}^{N_{h}}_{i=1}$,   we can represent the variables in (\ref{optimal-system-fem}) with the linear combination of the corresponding basis functions as
\[
u_{h}=\sum\limits^{N_{h}}_{i=1}u_{h,i}\psi_{i},\quad  f_{h}=\sum\limits^{N_{e}}_{j=1}f_{h,j}\phi_{j}, \quad
\lambda_{h}=\sum\limits^{N_{h}}_{k=1}\lambda_{h,k}\psi_{k}.
\]
Furthermore, if affine assumption (\ref{affine})
holds, the optimality system (\ref{optimal-system-fem}) will be
\begin{equation*}
\begin{cases}
\begin{split}
&\sum\limits^{Q_{a}}_{q=1} \sum\limits^{N_{h}}_{i=1}Q^{q}_{a}(\mu) u_{h,i}(\mu) a^{q}(\psi_{i},\psi_{i'})= \sum\limits^{N_{e}}_{j=1}f_{h,j}(\mu)(\phi_{j},\psi_{i'}),\\
&\sum\limits^{Q_{a}}_{q'=1} \sum\limits^{N_{h}}_{k=1}Q^{q'}_{a}(\mu)\lambda_{h,k}(\mu)
a^{q'}(\psi_{k},\psi_{k'})+\sum\limits^{N_{h}}_{i=1}u_{h,i}(\mu)(\psi_{i},\psi_{k'})=
-\sum\limits^{Q_{u}}_{p=1}\widehat{u}_{p}(\mu)(\overline{\hat{u}}_{p},\psi_{k'}),\\
&2\beta\sum\limits^{N_{e}}_{j=1}\sum\limits^{N_{e}}_{j'=1}f_{h,j}(\mu)f_{h,j'}(\mu)
(\phi_{j},\phi_{j'})=\sum\limits^{N_{h}}_{k=1}\lambda_{h,k}(\mu)(\phi_{j'},\psi_{k}).
\end{split}
\end{cases}
\end{equation*}

Then the algebraic formulation
of (\ref{optimal-system-fem}) reads
\begin{equation}
\label{full-matrix}
\underbrace{\left[
  \begin{array}{ccc}
    2\beta M_{1,h}(\mu) & 0              & -M^{T}_{2,h}(\mu)\\
    0                    & M_{3,h}(\mu)  & K^{T}_{h}(\mu)\\
    -M_{2,h}(\mu)       &K_{h}(\mu)       &0
  \end{array}
\right]}\limits_{\Lambda_{h}(\mu)\in \mathbb{R}^{(2N_{h}+N_{e})\times(2N_{h}+N_{e})}}
\left[
\begin{array}{c}
F(\mu)\\
u(\mu)\\
\lambda(\mu)
\end{array}
\right]
=  \left[
\begin{array}{c}
0\\
\overline{\widehat{U}}\\
d
\end{array}
\right],
\end{equation}
where
\begin{equation}
\label{matrix-def}
\begin{split}
&(M_{1,h})_{j,j'} =(\phi_{j},\phi_{j'}), 1\leq j,j'\leq N_{e},\\
&(M_{2,h})_{k,j'} = (\phi_{j'},\psi_{k}), 1\leq j'\leq N_{e}, 1\leq k\leq N_{h},\\
&(M_{3,h})_{i,k'} =(\psi_{i},\psi_{k'}), (K^{q}_{h})_{k,k'}
=a^{q}(\psi_{k},\psi_{k'}), 1\leq i,k,k'\leq N_{h},\\
&(\widehat{U}_{p})_{k'}=(\overline{\hat{u}}_{p}, \psi_{k'}), 1\leq k'\leq N_{h},\\
&K_{h}=\sum^{Q_{a}}_{q=1}Q^{q}_{a}(\mu)K^{q}_{h}, \overline{\widehat{U}}=\sum^{Q_{u}}_{p=1}\widehat{u}_{p}(\mu)\widehat{U}_{p}.
\end{split}
\end{equation}
Here $u(\mu)$, $F(\mu)$ and $\lambda(\mu)$ denote the vectors of the coefficients in the expansion of $u_{h}(\mu)$, $f_{h}(\mu)$, $\lambda_{h}(\mu)$, respectively. The term coming from the boundary values of $u_{h}$ is denoted by $d$. For simplicity of notation, we have suppressed the size of zero-blocks. In the following, we will change the size of zeros-blocks according to the size of the related  nonzero-blocks.

With the affine assumption (\ref{affine}), stiffness-matrix and mass-matrices are performed only once at the offline stage with expensive computational cost. At the online stage,  for each new parameter sample $\mu \in \Gamma$, all the coefficients $Q^{q}_{a}(\mu)$ and  $\widehat{u}_{p}(\mu)$ are evaluated, and the $(2N_{h}+N_{e})\times (2N_{h}+N_{e})$ linear system (\ref{full-matrix}) is assembled and solved. The online operation count is $O(Q_{a}\mathcal{N}^{2})+ O(Q_{u}\mathcal{N})$ to perform the sum of the last line in (\ref{matrix-def}), and is $O((3\mathcal{N})^{3})$ to invert the matrix $\Lambda_{h}(\mu)$. Thus, the total online operation count to get the FE optimal solutions for each sample $\mu$ is
\[O(Q_{a}\mathcal{N}^{2})+ O(Q_{u}\mathcal{N})+O((3\mathcal{N})^{3}),\]
which depends on the dimension of FE space $\mathcal{N}$.
We define the spaces for the state, control, and adjoint variable, respectively, as
\begin{equation}
\label{FE-space}
\begin{cases}
\begin{split}
X^{\mathcal{N}}_{h}(\Omega)= \text{span} \{u_{h}(\mu), \forall \mu \in \Gamma\},\\
Y^{\mathcal{N}}_{h}(\Omega)= \text{span} \{f_{h}(\mu), \forall \mu \in \Gamma\},\\
Z^{\mathcal{N}}_{h}(\Omega)= \text{span} \{\lambda_{h}(\mu), \forall \mu \in \Gamma\}.
\end{split}
\end{cases}
\end{equation}
\section{The global reduced basis method and the local model reduction method}
\label{sect4}
In Section \ref{sad-sys} and Section \ref{FEM-va}, we can see that both the minimization problems and the equivalent optimality systems are related to the parameter sample $\mu$. This means that we need to compute the optimality system for one time with a given random sample. This is a many-query problem  and the computational cost will be very expensive. To overcome the difficulty, we build  a reduced  model to improve computation efficiency. In this section, we introduce a local-global model reduction method to construct a low-fidelity optimal control model. When performing the optimal process for a new given configuration, the solution will be computed efficiently.

\subsection{The global reduced basis approximation}
\label{glo-RB}
In this section, we follow the framework \cite{hrsp07, jl16, jl17,  qrm11, rhp08} to present the global RB method for solving stochastic optimal control problems. The essential components of the RB method: RB projection and optimality system.   An offline-online computational stratagem will be presented.

\subsubsection{Construction of global reduced basis approximation spaces and Galerkin projection}
\label{RB-space}
 We assume that we have been given FE approximation spaces $V^{h}(\Omega)$, $M_{h}(\Omega)$ in a fine grid with (typically very large) dimensions $\mathcal{N}$ and $N_{e}$. The spaces $\mathscr{V}_{h}(\Omega)$ and $\mathcal{M}_{h}(\Omega)$ are automatically given.
 Given a positive integer $N_{max}$, we then introduce an associated sequence of (what shall ultimately be reduced basis) approximation spaces: for $N=1, \cdots, N_{max}$, $X^{N}_{h}(\Omega)$ is an N-dimensional subspace of $X^{\mathcal{N}}_{h}(\Omega)$. Let $\{X^{n}_{h}\}^{N_{max}}_{n=1}$ be nested, that is, $X^{1}_{h} \subset X^{2}_{h} \subset \cdots \subset X^{N_{max}}_{h} \subset X^{\mathcal{N}}_{h}(\Omega)\subset \mathscr{V}^{h}(\Omega)$. For the control variable $f$, we also define the similar subspaces: $Y^{1}_{h} \subset Y^{2}_{h} \subset \cdots \subset Y^{N_{max}}_{h} \subset Y^{\mathcal{N}}_{h}(\Omega)\subset \mathcal{M}_{h}(\Omega)$. For the adjoint variable $\lambda$, the set of nested subspaces is: $Z^{1}_{h} \subset Z^{2}_{h} \subset \cdots \subset Z^{N_{max}}_{h} \subset Z^{\mathcal{N}}_{h}(\Omega)\subset \mathscr{V}^{h}(\Omega)$. These nested subspaces are crucial  in ensuring (memory) efficiency of the resulting RB approximation.

In order to define a  sequence of Lagrange spaces $\{X^{N}_{h}(\Omega), Y^{N}_{h}(\Omega), Z^{N}_{h}(\Omega), 1\leq N\leq N_{max}\}$, we first use
\[S_{N}=\{\mu^{1},\cdots,\mu^{N},1\leq N\leq N_{max}\}\]
to denote the set of properly selected parameter samples from $\Gamma$. The associated Lagrange RB spaces are
\begin{equation}
\label{rb-space}
\begin{split}
\begin{cases}
X^{N}_{h}(\Omega)=\text{span} \{u_{h}(\mu^{n}), 1\leq n \leq N\},\\
Y^{N}_{h}(\Omega)=\text{span} \{f_{h}(\mu^{n}), 1\leq n \leq N\},\\
Z^{N}_{h}(\Omega)=\text{span} \{\lambda_{h}(\mu^{n}), 1\leq n \leq N\},
\end{cases}
\end{split}
\end{equation}
and denote $\mathcal{U}^{N}_{h}(\Omega)= X^{N}_{h}(\Omega) \times Y^{N}_{h}(\Omega)$.

The $u_{h}(\mu^{n}), f_{h}(\mu^{n}), \lambda_{h}(\mu^{n})$, $1\leq n \leq N_{max}$, are often referred to as ``snapshots" of the parametric manifolds and are obtained by solving (\ref{optimal-system-fem}) for $\mu^{n}$, $1\leq n\leq N_{max}$. The choice of the parameters $\{\mu^{n}\}^{N_{max}}_{n=1}$ will effect on  the fidelity of the reduced model to approximate the original model. To  choose the set of optimal parameter samples, we will use a sampling strategy, greedy algorithm,  in Section \ref{greedy-sample}.

By using  Galerkin projection onto the low-dimensional subspace $\mathscr{U}^{N}_{h}(\Omega)\times Z^{N}_{h}(\Omega)$, the following RB approximation can be obtained: given $\mu \in \Gamma$, find $(\underline{u}^{N}_{g}, \lambda^{N}_{g}) \in \mathscr{U}^{N}_{h}(\Omega)\times Z^{N}_{h}(\Omega)$ such that
\begin{equation}
\label{opt-sys-grb}
\begin{cases}
\begin{split}
\mathcal{A}(\underline{u}_{g},\underline{v}_{g})+\mathcal{B}(\underline{v}_{g},\lambda_{g})
&=(\underline{\hat{u}},\underline{v}_{g})\quad \forall \underline{v}_{g} \in \mathscr{U}^{N}_{h},\\
\mathcal{B}(\underline{u}_{g},\widetilde{u}_{g})&=(g,\widetilde{u}_{g})_{\partial \Omega} \quad \forall \widetilde{u}_{g} \in Z^{N}_{h}(\Omega).
\end{split}
\end{cases}
\end{equation}

Next, we discuss the well-posedness of the global RB approximation (\ref{opt-sys-grb}).
The continuity  of the bilinear forms is automatically inherited from the FE spaces.
Similarly, with the kernel space of bilinear form $\mathcal{B}(\cdot,\cdot;\mu)$ as $\mathscr{U}^{N}_{0}:= \{ \underline{u}_{g}\in \mathscr{U}^{N}_{h}: \mathcal{B}(\underline{u}_{g}, \widetilde{u}_{g})=0~~\forall \widetilde{u}_{g}\in Z^{N}_{h}(\Omega)\}$, the coercivity of bilinear form $\mathcal{A}(\cdot,\cdot;\mu)$ can be guaranteed. In the continuous case (or approximated by FE method), both the state and adjoint spaces belong to  $\mathscr{H}^{1}(\Omega)$ (or $\mathscr{V}^{h}(\Omega)$) and the bilinear form $\mathcal{B}(\cdot,\cdot;\mu)$ satisfies the inf-sup condition. But with the choice (\ref{rb-space}), we lose this property on the  Lagrange RB spaces, i.e., $X^{N}_{h}(\Omega)\neq Z^{N}_{h}(\Omega)$.

In order to recovery the inf-sup condition for system (\ref{opt-sys-grb}), we therefore need to enrich in some way
at least one of the RB spaces involved. With the method considered in some previous works \cite{ld10, nrmq13},
we use an enriched RB space $Q^{N}_{h}(\Omega)$ as the union $X^{N}_{h}(\Omega)$ and $Z^{N}_{h}(\Omega)$, i.e.,
\begin{equation}
\label{enrich-space}
Q^{N}_{h}(\Omega) = X^{N}_{h}(\Omega) \cup Z^{N}_{h}(\Omega) = \text{span} \{u_{h}(\mu^{n}), \lambda_{h}(\mu^{n}), 1\leq n \leq N\},
\end{equation}
and we let
\begin{equation}
\label{new-space}
X^{N}_{h}(\Omega)=Q^{N}_{h}(\Omega),\quad \mathcal{U}^{N}_{h}(\Omega)=X^{N}_{h}(\Omega)\times Y^{N}_{h}(\Omega),\quad Z^{N}_{h}(\Omega)=Q^{N}_{h}(\Omega).
\end{equation}
\begin{lem}
With the definition of reduced spaces $\mathcal{U}^{N}_{h}(\Omega)$ and $Z^{N}_{h}(\Omega)$ in (\ref{enrich-space})-(\ref{new-space}), then the bilinear form $\mathcal{B}(\cdot,\cdot;\mu)$ satisfies the inf-sup
condition. Moreover, for the inf-sup value $\beta^{N}(\mu)$, we have
\[\beta^{N}(\mu)\geq \tilde{\alpha}^{\mathcal{N}}(\mu),\]
where $\tilde{\alpha}^{\mathcal{N}}(\mu)$ is the coercivity constant of the bilinear form $\mathcal{A}(\cdot,\cdot;\mu)$ with the FE approximation.
\end{lem}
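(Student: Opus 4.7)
The plan is to exploit the enrichment identity $X^{N}_{h}(\Omega) = Z^{N}_{h}(\Omega)$ imposed by (\ref{new-space}) and combine it with the coercivity of $a(\cdot,\cdot;\mu)$ inherited on the FE subspace. Concretely, I would start from the definition
\[
\beta^{N}(\mu) := \inf_{0\neq\widetilde{u}\in Z^{N}_{h}(\Omega)}\sup_{0\neq\underline{u}\in \mathcal{U}^{N}_{h}(\Omega)} \frac{\mathcal{B}(\underline{u},\widetilde{u})}{\|\underline{u}\|_{\mathscr{U}}\,\|\widetilde{u}\|_{\mathscr{H}^{1}(\Omega)}},
\]
fix an arbitrary $0 \neq \widetilde{u}\in Z^{N}_{h}(\Omega)$, and then use a single well-chosen test element to bound the inner supremum from below.

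The key observation is that because $X^{N}_{h}(\Omega)$ and $Z^{N}_{h}(\Omega)$ were both set equal to the enriched space $Q^{N}_{h}(\Omega)$, the element $\widetilde{u}$ itself belongs to $X^{N}_{h}(\Omega)$. Hence the pair $\underline{u}^{*} = (\widetilde{u},0)$ lies in $\mathcal{U}^{N}_{h}(\Omega) = X^{N}_{h}(\Omega) \times Y^{N}_{h}(\Omega)$, where I use that $0 \in Y^{N}_{h}(\Omega)$ by linearity. Evaluating $\mathcal{B}$ on this pair using (\ref{bilinear-forms}) gives
\[
\mathcal{B}(\underline{u}^{*},\widetilde{u}) \;=\; a(\widetilde{u},\widetilde{u};\mu) - (0,\widetilde{u};\mu) \;=\; a(\widetilde{u},\widetilde{u};\mu),
\]
and coercivity of $a(\cdot,\cdot;\mu)$ on the FE space with constant $\tilde{\alpha}^{\mathcal{N}}(\mu)$ yields $\mathcal{B}(\underline{u}^{*},\widetilde{u}) \geq \tilde{\alpha}^{\mathcal{N}}(\mu)\|\widetilde{u}\|^{2}_{\mathscr{H}^{1}(\Omega)}$. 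Since the graph norm on $\mathscr{U}$ gives $\|\underline{u}^{*}\|_{\mathscr{U}} = \|\widetilde{u}\|_{\mathscr{H}^{1}(\Omega)} + 0$, dividing by $\|\underline{u}^{*}\|_{\mathscr{U}}\|\widetilde{u}\|_{\mathscr{H}^{1}(\Omega)}$ and passing to the supremum over $\underline{u}\in\mathcal{U}^{N}_{h}(\Omega)$ gives the lower bound $\tilde{\alpha}^{\mathcal{N}}(\mu)$; taking the infimum over $\widetilde{u}$ delivers the claimed estimate.

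There is no serious obstacle here; the argument is essentially a one-direction supremum test enabled entirely by the enrichment step (\ref{enrich-space})--(\ref{new-space}), and this is precisely why the construction was introduced. The only items requiring a moment of care are: verifying that $\underline{u}^{*}$ is admissible (which requires the enrichment applied to the state space, not just the adjoint space), using the graph-norm definition of $\|\cdot\|_{\mathscr{U}}$ correctly when the control component vanishes, and reading $\tilde{\alpha}^{\mathcal{N}}(\mu)$ as the FE-level coercivity constant of the state bilinear form $a(\cdot,\cdot;\mu)$ (consistent with the bounds established in the proof of Lemma \ref{equivalence}). No error estimator or Brezzi-type compatibility condition on $Y^{N}_{h}$ is needed for this particular bound.
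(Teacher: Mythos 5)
Your proposal is correct and follows essentially the same route as the paper: restrict the supremum to pairs of the form $(\widetilde{u},0)$ (admissible precisely because the enrichment forces $X^{N}_{h}(\Omega)=Z^{N}_{h}(\Omega)=Q^{N}_{h}(\Omega)$) and invoke the FE-level coercivity of $a(\cdot,\cdot;\mu)$ to obtain the lower bound $\tilde{\alpha}^{\mathcal{N}}(\mu)$. You merely make explicit the choice $u_{g}=\widetilde{u}$ that the paper leaves implicit in its final inequality, so no further comparison is needed.
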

\begin{proof}
 In fact,
\begin{equation*}
\begin{split}
\sup\limits_{0\neq\underline{u}_{g}\in \mathcal{U}^{N}_{h}(\Omega)}\frac{\mathcal{B}(\underline{u}_{g},\tilde{u}_{g};\mu)}
{\|\underline{u}_{g}\|_{\mathcal{U}^{N}_{h}(\Omega)}}
&=\sup\limits_{0\neq (u_{g},h_{g})\in X^{N}_{h}(\Omega)\times Y^{N}_{h}(\Omega)}\frac{a(u_{g},\tilde{u}_{g};\mu)-(h_{g},\tilde{u}_{g};\mu)}
{\|u_{g}\|_{X^{N}_{h}(\Omega)}+\|h_{g}\|_{Y^{N}_{h}(\Omega)}}\\
&\geq \sup\limits_{0\neq (u_{g},0)\in X^{N}_{h}(\Omega)\times Y^{N}_{h}(\Omega)}\frac{a(u_{g},\tilde{u}_{g};\mu)}
{\|u_{g}\|_{X^{N}_{h}(\Omega)}}\\
&\geq \tilde{\alpha}^{N}(\mu)\|\tilde{u}_{g}\|_{Z^{N}_{h}(\Omega)}.
\end{split}
\end{equation*}
Here $\tilde{\alpha}^{N}(\mu)$ represents the coercivity constant of the bilinear form $\mathcal{A}(\cdot,\cdot;\mu)$ with the FE approximation. Thus, we complete the proof.
\end{proof}

\subsubsection{Algebraic formulation and offline-online computation}
\label{offline-online}
Given the spaces $X^{N}_{h}(\Omega), Y^{N}_{h}(\Omega), Z^{N}_{h}(\Omega)$, the associated optimality system
is: given $\mu \in \Gamma$, find $u^{N}_{h} \in X^{N}_{h}(\Omega), f^{N}_{h} \in Y^{N}_{h}(\Omega), \lambda^{N}_{h}\in Z^{N}_{h}(\Omega)$ such that
\begin{equation}
\label{optimal-system-rb}
\begin{cases}
\begin{split}
a(u^{N}_{h},\widetilde{u}'_{h};\mu)&=(f^{N}_{h},\widetilde{u}'_{h};\mu) ~\forall \widetilde{u}'_{h} \in X^{N}_{h}(\Omega),\\
a(\lambda^{N}_{h},\widetilde{\lambda}'_{h};\mu)&=-(u^{N}_{h}-\hat{u},\widetilde{\lambda}'_{h};\mu) ~\forall \widetilde{\lambda}'_{h} \in Z^{N}_{h}(\Omega),\\
2\beta(f^{N}_{h},\widetilde{f}'_{h};\mu)&=(\widetilde{f}'_{h},\lambda^{N}_{h};\mu) ~\forall \widetilde{f}'_{h} \in Y^{N}_{h}(\Omega).
\end{split}
\end{cases}
\end{equation}
For the sake of algebraic stability in assembling the RB matrices and performing Galerkin projection \cite{rhp08}, we orthonormalize the snapshots in the RB spaces $X^{N}_{h}(\Omega)$, $Y^{N}_{h}(\Omega)$ and $Z^{N}_{h}(\Omega)$ by the Gram--Schmidt process with respect to the $L^{2}$-inner products, yielding
\[ X^{N}_{h}(\Omega)=\{\delta_{i}, 1\leq i\leq 2N\},
\quad Y^{N}_{h}(\Omega)=\{\xi_{j}, 1\leq j \leq N\},
\quad Z^{N}_{h}(\Omega)=\{\delta_{i}, 1\leq i\leq 2N\}.
\]
So the solutions $(u^{N}_{h},f^{N}_{h},\lambda^{N}_{h})$ can be represented  by
\begin{equation}
\label{Rb-repre}
u^{N}_{h} = \sum\limits^{2N}_{i=1} u^{N}_{i}(\mu) \delta_{i},~~
f^{N}_{h} = \sum\limits^{N}_{j=1} f^{N}_{j}(\mu) \xi_{j},~~
\lambda^{N}_{h} = \sum\limits^{2N}_{k=1} \lambda^{N}_{k}(\mu) \delta_{k}.
\end{equation}
By plugging $\widetilde{u}'_{h}=\delta_{i'}, \widetilde{f}'_{h}=\xi_{j'}, \widetilde{\lambda}'_{h}=\delta_{k'}$
into model (\ref{optimal-system-rb}), we  get
\begin{equation}
\label{os-dis1}
\begin{cases}
\begin{split}
&\sum\limits^{2N}_{i=1}u^{N}_{i}(\mu)a(\delta_{i},\delta_{i'};\mu)
=\sum\limits^{N}_{j=1}f^{N}_{j}(\mu)(\xi_{j},\delta_{i'}),\\
&\sum\limits^{2N}_{k=1}\lambda^{N}_{k}(\mu)a(\delta_{k},\delta_{k'};\mu)+
\sum\limits^{2N}_{i=1}u^{N}_{i}(\mu)(\delta_{i},\delta_{k'})=(\hat{u},\delta_{k'};\mu),\\
&2\beta\sum\limits^{N}_{j=1}f^{N}_{j}(\mu)(\xi_{j},\xi_{j'})
=\sum\limits^{2N}_{k=1} \lambda^{N}_{k}(\mu) (\xi_{j'},\delta_{k}).
\end{split}
\end{cases}
\end{equation}

The system (\ref{os-dis1}) implies a linear algebraic system with $5N$ unknowns. The stiffness matrix, mass matrices
and load vector from system (\ref{os-dis1}) involve the computation of inner products $a(\delta_{i},\delta_{i'})$, $(\delta_{i},\delta_{k'})$, $(\xi_{j},\xi_{j'})$, $(\xi_{j},\delta_{i'})$ and $(\hat{u}, \delta_{k'})$. For a new parameter sample $\mu\in \Gamma$, we need to compute the matrices and the load vector for one time. When the number  of parameter samples is large, the computation of the system will be expensive. If the assumption (\ref{affine}) of affine assumption holds, the system will be
\begin{equation}
\label{os-dis}
\begin{cases}
\begin{split}
&\sum\limits^{Q_{a}}_{q=1}\sum\limits^{2N}_{i=1}u^{N}_{i}(\mu)Q^{q}_{a}(\mu)a^{q}(\delta_{i},\delta_{i'})
=\sum\limits^{N}_{j=1}f^{N}_{j}(\mu)(\xi_{j},\delta_{i'}),\\
&\sum\limits^{Q_{a}}_{q'=1}\sum\limits^{2N}_{k=1}\lambda^{N}_{k}(\mu) Q^{q'}_{a}(\mu)a^{q'}(\delta_{k},\delta_{k'})+
\sum\limits^{2N}_{i=1}u^{N}_{i}(\mu)(\delta_{i},\delta_{k'})=
\sum\limits^{Q_{u}}_{p=1}\widehat{u}_{p}(\mu)(\overline{\widehat{u}}_{p},\delta_{k'}),\\
&2\beta\sum\limits^{N}_{j=1}f^{N}_{j}(\mu)(\xi_{j},\xi_{j'})
=\sum\limits^{2N}_{k=1} \lambda^{N}_{k}(\mu) (\xi_{j'},\delta_{k}).
\end{split}
\end{cases}
\end{equation}
Because basis function $\delta_{i}$ belongs to the FEM space $V^{h}_{0}(\Omega)$, it can be written as
\[\delta_{i}= \sum^{M}_{k=1}(\mathcal{Z}_{1})_{i,k}\psi_{k},~ i=1,\cdots, 2N.\]
Similarly, with the piecewise constant basis of $\mathcal{M}_{h}(\Omega)$ as $\{\phi_{j}\}^{N_{f}}_{j=1}$,
we can obtain
\[\xi_{i}= \sum^{N_{f}}_{s=1}(\mathcal{Z}_{2})_{i,s}\phi_{s},~ i=1,\cdots, N.\]
The linear system (\ref{os-dis}) will become
\begin{equation}
\label{aff-rep-glo}
\begin{cases}
\begin{split}
&\sum\limits^{Q_{a}}_{q=1}\sum\limits^{2N}_{i=1}\sum\limits^{M}_{\ell=1}\sum\limits^{M}_{\ell'=1}u^{N}_{i}(\mu)
Q^{q}_{a}(\mu)(\mathcal{Z}_{1})_{i,\ell}(\mathcal{Z}_{1})_{i',\ell'}a^{q}(\psi_{i,\ell},\psi_{i',\ell'})\\
&=\sum\limits^{N}_{j=1}\sum\limits^{M}_{\ell'=1}\sum\limits^{N_{f}}_{s=1}(\mathcal{Z}_{1})_{i',\ell'}
(\mathcal{Z}_{2})_{j,s}f^{N}_{j}(\mu)(\phi_{j,s},\psi_{i',\ell'}),\\
&\sum\limits^{Q_{a}}_{q'=1}\sum\limits^{2N}_{k=1}\sum\limits^{M}_{\ell=1}\sum\limits^{M}_{\ell'=1}
\lambda^{N}_{k}(\mu)Q^{q'}_{a}(\mu)(\mathcal{Z}_{1})_{k,\ell}(\mathcal{Z}_{1})_{k',\ell'}a^{q'}
(\psi_{k,\ell},\psi_{k',\ell'})\\
&+\sum\limits^{2N}_{i=1}\sum\limits^{M}_{\ell=1}\sum\limits^{M}_{\ell'=1}u^{N}_{i}(\mu)(\mathcal{Z}_{1})_{i,\ell}
(\mathcal{Z}_{1})_{k',\ell'}(\psi_{i,\ell},\psi_{k',\ell'})=\sum\limits^{Q_{u}}_{p=1}\sum\limits^{M}_{\ell'=1}
\widehat{u}_{p}(\mu)(\overline{\widehat{u}}_{p},\psi_{k',\ell'}),\\
&2\beta\sum\limits^{N}_{j=1}\sum\limits^{N_{f}}_{s=1}\sum\limits^{N_{f}}_{s'=1}(\mathcal{Z}_{2})_{j,s}
(\mathcal{Z}_{2})_{j',s'}f^{N}_{j}(\mu)(\phi_{s},\phi_{s'})
=\sum\limits^{2N}_{k=1}\sum\limits^{N_{f}}_{s'=1}\sum\limits^{M}_{\ell=1} (\mathcal{Z}_{2})_{j',s'}(\mathcal{Z}_{1})_{k,\ell}\lambda^{N}_{k}(\mu) (\phi_{j'},\psi_{k}).
\end{split}
\end{cases}
\end{equation}
This gives rise to the matrix form
\begin{equation}
\label{matrix-sep}
\begin{cases}
\begin{split}
&\sum\limits^{Q_{a}}_{q=1} Q^{q}_{a}(\mu)(\mathcal{Z}^{T}_{1}K^{q}\mathcal{Z}_{1})\mathbf{u}^{N}(\mu)
= \mathcal{Z}^{T}_{1}M^{T}_{2}\mathcal{Z}_{2} \mathbf{F}^{N}(\mu),\\
&\sum\limits^{Q_{a}}_{q'=1} Q^{q'}_{a}(\mu)(\mathcal{Z}^{T}_{1}{K}^{q'}\mathcal{Z}_{1})\pmb{\lambda}^{N}(\mu)
+\mathcal{Z}^{T}_{1} M_{3}\mathcal{Z}_{1} \mathbf{u}^{N}(\mu)
=\sum\limits^{Q_{u}}_{p=1}\mathbf{\widehat{u}_{p}}(\mu)(\mathcal{Z}^{T}_{1}\overline{\widehat{U}}_{p}),\\
&2\beta \mathcal{Z}^{T}_{2}M_{1}\mathcal{Z}^{T}_{2} \mathbf{F}^{N}(\mu) = \mathcal{Z}^{T}_{2}M_{2} \mathcal{Z}_{1} \pmb{\lambda}^{N}(\mu),
\end{split}
\end{cases}
\end{equation}
where $(K^{q})_{i,i'}=a^{q}(\delta_{i},\delta_{i'})$,
$(M_{1})_{j,j'}=(\xi_{j},\xi_{j'})$, $(M_{2})_{j,i'}=(\xi_{j},\delta_{i'})$,
$(M_{3})_{i,k'}=(\delta_{i},\delta_{k'})$.
Thanks to the affine assumption, these inner products are computed for only one time in the offline phase.
For a given parameter sample $\mu$, we only need to update the coefficients $Q^{q}_{a}(\mu)$ and $\mathbf{\widehat{u}_{p}}(\mu)$ in the online phase.
With the notations
\begin{equation*}
\begin{cases}
\begin{split}
&K_{g}(\mu)=\sum\limits^{Q_{a}}_{q=1} Q^{q}_{a}(\mu)(\mathcal{Z}^{T}_{1}K^{q})\mathcal{Z}_{1},\\
&\overline{\widehat{U}}_{g}=\sum\limits^{Q_{u}}_{p=1} \mathbf{\widehat{u}_{p}}(\mu)(\mathcal{Z}^{T}_{1}\overline{\widehat{U}}_{p}),\\
&M_{1,g}=\mathcal{Z}^{T}_{2}M_{1}\mathcal{Z}^{T}_{2}, M_{2,g}=\mathcal{Z}^{T}_{1}M^{T}_{2}\mathcal{Z}_{2},\\ &M_{3,g}=\mathcal{Z}^{T}_{1} M_{3}\mathcal{Z}_{1},
\end{split}
\end{cases}
\end{equation*}
 we can further get
\begin{equation}
\label{matrix}
\underbrace{\left[
  \begin{array}{ccc}
    2\beta M_{1,g}(\mu) & 0              & -M^{T}_{2,g}(\mu)\\
    0                    & M_{3,g}(\mu)  & K^{T}_{g}(\mu)\\
    -M_{2,g}(\mu)       &K_{g}(\mu)       &0
  \end{array}
\right]}_{\Lambda_{g}(\mu)\in \mathbb{R}^{5N\times5N}}
\left[
\begin{array}{c}
\vec{F}_{g}(\mu)\\
\vec{u}_{g}(\mu)\\
\vec{\lambda}_{g}(\mu)
\end{array}
\right]
=  \left[
\begin{array}{c}
0\\
\overline{\widehat{U}}_{g}\\
d_{g}
\end{array}
\right],
\end{equation}
where $d_{g}$ represents the corresponding boundary part with $d_{g}=K_{g,bd} \vec{u}_{g}(\mu)$.

Similarly, the online operation count depends on $N$, $Q^{a}$, $Q^{u}$ but independent of $\mathcal{N}$. At the online
stage, we need $O(Q^{a}N^{2})+ O(Q_{u}N)$ operations to assemble the matrix $\Lambda_{g}(\mu)$, right vector in
(\ref{matrix}) and $O((5N)^{3})$ to invert the matrix. Thus, for a new random sample, the computation complexity is
\[O(Q^{a}N^{2})+ O(Q_{u}N)+O((5N)^{3})\]
to get the global RB optimal solutions.  Due to the hierarchical condition among $\{X^{h}_{n}\}^{Nmax}_{n=1}$,  the online storage is  only
\[O(Q^{a}N^{2}_{max})+O(Q^{u}N_{max}).\]
The online computation cost (operation count and storage) to evaluate $\mu\rightarrow (u_{g}(\mu), f_{g}(\mu), \lambda_{g}(\mu))$ is thus independent of $\mathcal{N}$. When $N\ll \mathcal{N}$, we can get the solutions very rapidly.

If we defined the matrix $\mathcal{Z}$ as
\begin{equation}
\label{matrix-rb}
\mathcal{Z}=\left[
  \begin{array}{ccc}
    \mathcal{Z}^{T}_{2} & 0               & 0\\
    0               &\mathcal{Z}^{T}_{1}  & 0\\
    0               &0                &\mathcal{Z}^{T}_{1}
  \end{array}
  \right]\in \mathbb{R}^{5N\times(N_{e}+2N_{h})},
\end{equation}
the reduced optimality matrix can be obtained by $\Lambda_{g}(\mu)=\mathcal{Z}\Lambda_{h}(\mu)\mathcal{Z}^{T}$.

Then we downscale the RB solution to fine scale solution by using RB functions with
\[u_{g}(\mu)=\mathcal{Z}^{T}_{1} \vec{u}_{g}(\mu), \quad f_{g}(\mu)=\mathcal{Z}^{T}_{2} \vec{F}_{g}(\mu), \quad \lambda_{g}(\mu)=\mathcal{Z}^{T}_{1} \vec{\lambda}_{g}(\mu).\]

\subsection{Local model order reduction via GMsFEM}
\label{gmsfem}
To construct the Lagrange RB spaces in Section \ref{RB-space}, we need to compute the system (\ref{optimal-system-fem}) for many training samples. In general, the number of the training  samples is large and it will require a demanding computational cost. Furthermore, the constraint in the optimal problem may be PDEs with multiscale structure. To overcome the difficulty, we will construct a local reduced  model with suitable fidility. As a local model reduction approach, generalized multiscale finite element method (GMsFEM)  is one of efficient model reduction methods. In this subsection, we will give a brief overview of the local model reduction using  GMsFEM. For motivation and details regarding GMsFEM, we refer the readers to \cite{bo83} and the references therein.

First, we set the scene of the numerical discretization, as demonstrated in Fig.~\ref{mesh-GMs}.
\begin{figure}[hbtp]  \centering
  \includegraphics[width=0.4\textwidth]{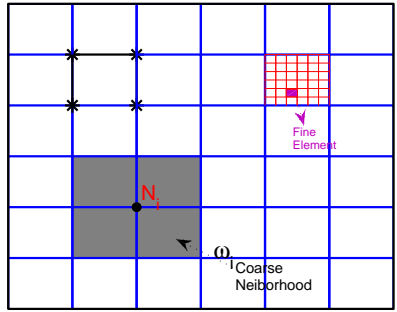}
  \caption{\em{Illustration of the discretization configuration. The computational domain $\Omega$ is equipped with a coarse mesh partition (the element marked by $\ast$) and a fine mesh partition (depicted by the red lines for a  coarse element). The gray region illustrates the neighborhood $\omega_{i}$ associated with the coarse node $N_{i}$.}}
  \label{mesh-GMs}
\end{figure}
We assume that the computational domain $\Omega$ is partitioned uniformly by a coarse mesh $\mathcal{T}_{H}$ and a fine mesh $\mathcal{T}_{h}$, with mesh size $H$ and $h$, respectively. Moreover, $\mathcal{T}_{h}$ is obtained by refining the coarse mesh $\mathcal{T}_{H}$. The nodes of the the coarse mesh are denoted by $\{N_{i}\}^{\mathcal{N}_{c}}_{i=1}$, where
$\mathcal{N}_{c}$ represents the number of coarse nodes. The neighborhood $\omega_{i}$ of the node $N_{i}$ consists of
all the coarse mesh elements for which node $N_{i}$ is a vertex, i.e.,
\[\omega_{i} = \cup \{K_{s} \in \mathcal{T}_{H} | x_{i} \in \overline {K_{s}}\}.\]

Generalized multiscle finite element method uses two stages: offline stage and online stage. At the offline stage of GMsFEM, we firstly construct the space of ``snapshots", $V^{(i)}_{snap}$, and then introduce the construction of local reduced basis functions. For the snapshot space, we can construct it by various ways \cite{ceh16}: (1) all fine-grid functions; (2) harmonic snapshots; (3) oversampling harmonic snapshots; and (4) forced-based snapshots. In this paper, we adopt the second choice to form a snapshot space. For each fine-grid
function $\delta^{h}_{j}(x)$, which is defined by $\delta^{h}_{j}(x_{k})=\delta_{j,k}$, $\forall j,k \in J_{h}(\omega_{i})$, where $J_{h}(\omega_{i})$ denotes the fine-grid nodes on the boundary $\partial \omega_{i}$. We obtain a snapshot function $\zeta^{\omega_{i}}_{j}(x)$ by solving
\[\mathcal{L}(\zeta^{\omega_{i}}_{j}(x))=0 ~\text{in} ~\omega_{i} \]
with the boundary condition, $\zeta^{\omega_{i}}_{j}(x)= \delta^{h}_{j}(x)$ on $\partial \omega_{i}$, and $\delta_{j,k}=1$ if $j=k$ and $\delta_{j,k}=0$ if $j\neq k$. Here, $\mathcal{L}$ is the differential operator corresponding to the state equation and adjoint equation in (\ref{optimal-system1}).
For brevity of notation we now omit the superscript $\omega_{i}$, yet it is assumed throughout this section that the local reduced space computations are localized to respective coarse subdomains.
We let $l_{i}$ be the number of functions in the snapshot space, and
\[V_{snap}(\omega_{i}) =\text{span} \{\zeta_{j}(x), 1\leq j\leq l_{i}\},\]
for each subregion $\omega_{i}$.
Components of $V_{snap}$ are linear combinations of the fine-grid basis functions with coefficients stored in $R_{snap}$, i.e.,
\[R_{snap}=[\zeta_{1},\zeta_{2},\cdots,\zeta_{l_{i}}.]\]

After obtaining the snapshot space $V_{snap}$, we move on to the construction of the local reduced space $V_{lr}(\Omega)$ following the similar procedure. We will solve local eigenvalue problem in the snapshot space $V_{snap}(\omega_{i})$. Given the neighborhood $\omega_{i}$, the local eigenvalue problem is defined by
\begin{equation}
\label{m-eig}
\begin{split}
 A^{lr} \phi^{lr}_{i,\ell} = \lambda_{\ell} S^{lr} \phi^{lr}_{i,\ell}.
 \end{split}
\end{equation}
If the state equation is a diffusion equation with diffusion coefficient $\kappa(x)$, we specify the matrixes in (\ref{m-eig}) by
 \[A^{lr}:=[a^{lr}_{mn}]=\int_{\omega_{i}}\kappa(x) \nabla \zeta_{m}\nabla \zeta_{n}dx, \quad
S^{lr}:=[s^{lr}_{mn}]=\int_{\omega_{i}}\kappa(x) \zeta_{m} \zeta_{n}dx.
\]
To accelerate the procedure of solving the local eigenvalue problem, we can replace the coefficient $\kappa(x)$ by $\widetilde{\kappa(x)}=\sum\limits^{\mathcal{N}_{c}}_{i=1}H^{2}|\nabla \chi_{i}|^{2}$ and $\{\chi_{i}\}$ is a set of partition of unity functions  \cite{bbo04, bl11} corresponding to the grid the grid $\mathcal{T}_{H}$.

Let $\{L_{p}(x)\}$ be the standard basis functions defined on the fine mesh, which belong to the FE approximation space $V_{h}(\Omega)$. Since the snapshot functions in $V_{snap}(\omega_{i})$ can be represented by the linear combinations of standard basis functions, $A^{lr}$ and $S^{lr}$ can be written as
\begin{equation*}
A^{lr}=(R_{snap})^{T}A R_{snap} \quad and \quad S^{lr}=(R_{snap})^{T}S R_{snap},
\end{equation*}
respectively.
Here $A$ and $S$ are the counterparts of $A^{lr}$ and $S^{lr}$ built with the fine-grid basis functions with the following expressions:
\begin{equation*}
A:=[a_{pq}]=\int_{\omega_{i}}\kappa(x) \nabla L_{p}\nabla L_{q}dx \quad and \quad
S:=[s_{pq}]=\int_{\omega_{i}}\kappa(x) L_{p} L_{q}dx.
\end{equation*}

We then let $\lambda^{i}_{1}\leq \lambda^{i}_{2} \leq \cdots$ be the eigenvalues and let $\phi^{lr}_{i,1}, \phi^{lr}_{i,2}, \cdots$ be the corresponding eigenvectors. Finally, we downscale the eigenvectors generated by snapshot functions to local fine scale solution by using basis functions in $V_{snap}$ and denote the eigenfunctions by $\{\rho_{i,\ell}\}$ on local fine grid $\omega_{i}$.

We note that $\{\zeta_{m}(x)\}$ (or $\{L_{m}(x)\}$) are functions of the spatial variables $x$,  whereas $\{\phi^{lr}_{i,\ell}\}$ (or $\{\rho_{i,\ell}\}$) are discrete vectors. The linear combination of the snapshot functions (or the fine grid basis functions), i.e., $\zeta^{i,\ell}$ (or $L^{i,\ell}$ ), with $\phi^{lr}_{i,\ell}$ (or $\rho_{i,\ell}$) as the coefficient vector can be understood as the eigenfunction of the continuous problem corresponding to (\ref{m-eig}).

The number of eigenvectors that satisfy (\ref{m-eig}) is the same as the number of the fine-grid basis functions defined
on the neighborhood $\omega_{i}$. However, we only retain a few of them that correspond to the smallest eigenvalues.
We choose the $M_{i}$ lowermost eigenvalues and the corresponding eigenvectors of the eigenvalue problem (\ref{m-eig}),
i.e. the eigenvalues and eigenvectors denoted by
$\{\lambda^{(i)}_{\ell}\}^{M_{i}}_{\ell=1}$ and $\{\rho^{(i)}_{\ell}\}^{M_{i}}_{\ell=1}$.
The local reduced space $V^{(i)}_{lr} := \{\rho^{(i)}_{\ell}, 1 \leq \ell \leq M_{i}\}$ on the local region $\omega_{i}$.
We  use partition of unity functions $\{\chi_{i}\}$ to paste the snapshot functions and get the multiscale basis function space
\[
V_{lr}(\Omega) := \text{span} \{\rho_{k}: 1\leq k\leq M\} = \text{span} \{\rho_{i,\ell}: \rho_{i,\ell} =
\chi_{i} \rho^{(i)}_{\ell}, 1\leq i\leq \mathcal{N}_{c}, 1\leq \ell \leq M_{i}\},
\]
where $M = \sum\limits^{\mathcal{N}_{c}}_{i=1} M_{i}$ is the total number of eigenvectors for reduced space.
Components of $V_{lr}$ are linear combinations of the fine-grid basis functions with coefficients stored
in $R^{l}$, i.e.,
\begin{equation}
\label{convert-matrix}
R^{l} = [\rho_{1}, \rho_{2}, \cdots, \rho_{M}].
\end{equation}
At the online stage, the multiscale basis functions can be repeatedly used.
Compared with direct numerical simulation on fine grid, GMsFEM can significantly improve the computation efficiency.

 We note that GMsFEM is only applied to the approximation space for  state variable and adjoint variable. While for control variable,
  we do not use GMsFEM approximation.

\section{Local-global model reduction method}
\label{sect5}
In this section, we will discuss the construction of the local-global model reduction framework and present  a greedy algorithm to get  the optimal samples from a training set.

\subsection{Local-global model reduction method for optimality system}

\label{lg-framework}
We apply local model reduction method into the optimality system (\ref{optimal-system-fem}), i.e., finding $u_{l}(\mu)\in \mathscr{V}_{lr}(\Omega)$, $f_{l}(\mu)\in \mathcal{M}_{h}(\Omega)$, $\lambda_{l}(\mu) \in \mathscr{V}_{lr}(\Omega)$ such that
\begin{equation}
\label{optimal-system-gms}
\begin{cases}
\begin{split}
a(u_{l},\widetilde{u}_{l};\mu)&=(f_{l},\widetilde{u}_{l};\mu) ~\forall \widetilde{u}_{l} \in \mathscr{V}_{lr}(\Omega),\\
a(\lambda_{l},\widetilde{\lambda}_{l};\mu)&=-(u_{l}-\hat{u},\widetilde{\lambda}_{l};\mu) ~\forall \widetilde{\lambda}_{l} \in \mathscr{V}_{lr}(\Omega),\\
2\beta(f_{l},\widetilde{f}_{l};\mu)&=(\widetilde{f}_{l},\lambda_{l};\mu) ~\forall \widetilde{f}_{l} \in \mathcal{M}_{h}(\Omega).
\end{split}
\end{cases}
\end{equation}
Here $\mathscr{V}_{lr}(\Omega):=L^{2}_{\rho}(\Gamma)\otimes V_{lr}(\Omega) \subset
\mathscr{V}_{h}(\Omega)$.

In the matrix notation, the reduced optimality matrix corresponding to (\ref{optimal-system-gms}) is defined by
\begin{equation}
\underbrace{\label{full-matrix}
\left[
  \begin{array}{ccc}
    2\beta M_{1}(\mu)        & 0                              & -M^{T}_{2}(\mu)R^{l}\\
    0                           & (R^{l})^{T}M_{3}(\mu)R^{l}  & (R^{l})^{T}K^{T}(\mu)R^{l}\\
    -(R^{l})^{T}M_{2}(\mu)   &(R^{l})^{T}K(\mu)R^{l}       &0
  \end{array}
\right]}\limits_{\Lambda_{l}(\mu)\in \mathbb{R}^{(N_{e}+2M)\times (N_{e}+2M)}}
\left[
\begin{array}{c}
\vec{F}_{l}(\mu)\\
\vec{u}_{l}(\mu)\\
\vec{\lambda}_{l}(\mu)
\end{array}
\right]
=  \left[
\begin{array}{c}
0\\
(R^{l})^{T}\widehat{U}\\
(R^{l})^{T}d
\end{array}
\right].
\end{equation}
Here if we set
\begin{equation*}
R=\left[
  \begin{array}{ccc}
    I     &0           &0\\
    0 &(R^{l})^{T}     &0\\
    0     &0     &(R^{l})^{T}\\
  \end{array}
\right] \in \mathbb{R}^{(N_{e}+2M)\times(N_{e}+2N_{h})},
\end{equation*}
then  $\Lambda_{l}(\mu)=R \Lambda_{l}(\mu) R^{T}$. Thus,
\[
u_{l}(\mu)= (R^{l})^{T} \vec{u}_{l}(\mu), \quad  f_{l}(\mu)= (R^{l})^{T} \vec{F}_{l}(\mu), \quad \lambda_{l}(\mu)= (R^{l})^{T} \vec{\lambda}_{l}(\mu).
\]

Similarly in Section \ref{glo-RB}, we can define the local-global reduced spaces for the state, control, adjoint variables, respectively, by
\begin{equation}
\label{lg-rb-space}
\begin{split}
\begin{cases}
X^{N}_{l}(\Omega)=\text{span} \{u_{l}(\mu^{n}), 1\leq n \leq N\}\\
Y^{N}_{l}(\Omega)=\text{span} \{f_{l}(\mu^{n}), 1\leq n \leq N\}\\
Z^{N}_{l}(\Omega)=\text{span} \{\lambda_{l}(\mu^{n}), 1\leq n \leq N\}.
\end{cases}
\end{split}
\end{equation}
Moreover, we define the enriched space by
\[Q^{N}_{lg}(\Omega)= X^{N}_{l}(\Omega) \cup Z^{N}_{l}(\Omega) = \text{span} \{u_{l}(\mu^{n}), \lambda_{l}(\mu^{n}), 1\leq n \leq N\},\]
and set
\[ X^{N}_{lg}(\Omega)=Q^{N}_{lg}(\Omega),\quad Y^{N}_{lg}(\Omega)=Y^{N}_{l}(\Omega),\quad Z^{N}_{lg}(\Omega)=Q^{N}_{lg}(\Omega),\quad \mathcal{U}^{N}_{lg}(\Omega)= X^{N}_{lg}(\Omega)\times Y^{N}_{lg}(\Omega).\]

On the low-dimensional subspace $\mathscr{U}^{N}_{lg}(\Omega)\times Z^{N}_{lg}(\Omega)$, the local-global reduced approximation is: for $\forall \mu \in \Gamma$, find $(\underline{u}^{N}_{lg}, \lambda^{N}_{lg}) \in \mathscr{U}^{N}_{lg}(\Omega)\times Z^{N}_{lg}(\Omega)$ such that
\begin{equation}
\label{opt-sys-glrb}
\begin{cases}
\begin{split}
\mathcal{A}(\underline{u}_{lg},\underline{v}_{lg})+\mathcal{B}(\underline{v}_{lg},\lambda_{lg})
&=(\underline{\hat{u}},\underline{v}_{lg})\quad \forall \underline{v}_{lg} \in \mathscr{U}^{N}_{lg},\\
\mathcal{B}(\underline{u}_{lg},\widetilde{u}_{lg})&=(g,\widetilde{u}_{lg})_{\partial \Omega} \quad \forall \widetilde{u}_{lg} \in Z^{N}_{lg}(\Omega).
\end{split}
\end{cases}
\end{equation}
In a similar way, we can show the continuity and coercivity properties for the local-global reduced saddle point problem (\ref{opt-sys-glrb}).

The corresponding optimality system is: for $\forall \mu \in \Gamma$, find $(u^{N}_{lg}, f^{N}_{lg}, \lambda^{N}_{lg}) \in X^{N}_{lg}(\Omega)\times Y^{N}_{lg}(\Omega)\times Z^{N}_{lg}(\Omega)$ such that
\begin{equation}
\label{optimal-system-lg}
\begin{cases}
\begin{split}
a(u_{lg},\widetilde{u}_{lg};\mu)&=(f_{lg},\widetilde{u}_{lg};\mu) ~\forall \widetilde{u}_{lg} \in X^{N}_{lg}(\Omega),\\
a(\lambda_{lg},\widetilde{\lambda}_{lg};\mu)&=-(u_{lg}-\hat{u},\widetilde{\lambda}_{lg};\mu) ~\forall \widetilde{\lambda}_{lg} \in Z^{N}_{lg}(\Omega),\\
2\beta(f_{lg},\widetilde{f}_{lg};\mu)&=(\widetilde{f}_{lg},\lambda_{lg};\mu) ~\forall \widetilde{f}_{lg} \in Y^{N}_{lg}(\Omega).
\end{split}
\end{cases}
\end{equation}

Let $\{\tau_{j}\}^{2N}_{j=1}=\{u_{l}(\mu^{n})\}^{N}_{n=1}\cup\{\lambda_{l}(\mu^{n})\}^{N}_{n=1}$ such that
$Q^{N}_{lg}(\Omega)= \text{span} \{\tau_{j}, j=1,\cdots,2N\}$, and we can express the local-global reduced state, adjoint, and control solutions as
\begin{equation*}
\begin{split}
\begin{cases}
u_{lg}=\sum\limits^{2N}_{i=1}u^{i}_{lg}\tau_{i}=\sum\limits^{2N}_{i=1}
\sum\limits^{2M}_{k=1}u^{i}_{lg}Z^{i}_{1,k}\rho_{k}
=\sum\limits^{2N}_{i=1}\sum\limits^{2M}_{k=1}\sum\limits^{N_{h}}_{n=1}u^{i}_{lg}Z^{i}_{1,k}R_{k,n}\psi_{n},\\
f_{lg}=\sum\limits^{N}_{j=1}f^{j}_{lg}f_{l}(\mu^{j})=\sum\limits^{N}_{j=1}
\sum\limits^{N_{e}}_{\ell=1}f^{j}_{lg}Z_{2,\ell}\phi_{\ell},\\
\lambda_{lg}=\sum\limits^{2N}_{m=1}\lambda^{m}_{lg}\tau_{m}=\sum\limits^{2N}_{m=1}
\sum\limits^{2M}_{n=1}\lambda^{m}_{lg}Z^{m}_{1,n}\rho_{n}=\sum\limits^{2N}_{m=1}
\sum\limits^{2M}_{n=1}\sum\limits^{N_{h}}_{s=1}\lambda^{m}_{lg}Z^{m}_{1,n}R_{n,s}\psi_{s}.
\end{cases}
\end{split}
\end{equation*}
With the affine assumption (\ref{affine}), we can get the similar linear system to (\ref{aff-rep-glo})
in the local-global reduced case. Hence, given a random sample $\mu$, the reduced linear system associated
to the system (\ref{optimal-system-lg}) can be written as:
\begin{equation}
\label{system-lg}
\underbrace{\left[
  \begin{array}{ccc}
    2\beta M_{1,lg}(\mu) & 0              & -M^{T}_{2,lg}(\mu)\\
    0                    & M_{3,lg}(\mu)  & K^{T}_{lg}(\mu)\\
    -M_{2,lg}(\mu)       &K_{lg}(\mu)       &0
  \end{array}
\right]}\limits_{\Lambda_{N}(\mu)\in \mathbb{R}^{5N \times 5N}}
\left[
\begin{array}{c}
F_{lg}(\mu)\\
u_{lg}(\mu)\\
\lambda_{lg}(\mu)
\end{array}
\right]
=  \left[
\begin{array}{c}
0\\
\overline{\widehat{U}}_{lg}\\
d_{lg}
\end{array}
\right].
\end{equation}
With the definition of the basis matrix: \begin{equation*}
\mathcal{Z}=\left[
  \begin{array}{ccc}
    \mathcal{Z}^{T}_{2}     &0       &0\\
    0         &\mathcal{Z}^{T}_{1}   &0\\
    0         &0       &\mathcal{Z}^{T}_{1}\\
  \end{array}
\right] \in \mathbb{R}^{5N\times(N_{e}+2M)},
\end{equation*}
$\Lambda_{N}(\mu)$ is given by $\Lambda_{N}(\mu)= \mathcal{Z}(R\Lambda_{}(\mu)R^{T})\mathcal{Z}^{T}$. Although being dense rather than sparse as in the FE case, the system matrix $\Lambda_{N}(\mu)$ is very small and still symmetric with saddle-point structure. Finally,  we can  downscale the local-global reduced solutions to fine scale solution by
\[
\vec{u}_{lg}(\mu)=Z_{1}R_{l}u_{lg}(\mu),\quad  \vec{f}_{lg}(\mu)=Z_{2}f_{lg}(\mu),\quad  \vec{\lambda}_{lg}(\mu)=Z_{1}R_{l}\lambda_{lg}(\mu).
\]

\subsection{Sampling strategy}
\label{greedy-sample}
In order to find a few optimal parameter samples $\mu^{n}, 1\leq n \leq N$ to construct  the hierarchical Lagrange RB approximation spaces and to assure the fidelity of the reduced model to approximate the original model, we use the sampling strategy based on the  greedy algorithm \cite{rhp08,qrm11}. We denote the finite-dimensional sample set by $\Xi_{train} \subset \Gamma$. The cardinality of $\Xi_{train}$ will be denoted $|\Xi_{train}|= n_{train}$ and we assume that $\Xi_{train}$ is a good surrogate for the set $\Gamma$. The idea of the greedy procedure is that, starting with a train sample $\Xi_{train}$,  we adaptively select $N_{max}$ parameters $\mu_{1}, \cdots, \mu_{N_{max}}$ and form the hierarchical sequence of RB spaces $X^{N}_{lg}(\Omega), Y^{N}_{lg}(\Omega), Z^{N}_{lg}(\Omega)$. At the $N-$th iteration, the greedy algorithm enriches  the retained snapshots by a particular candidate snapshot over all candidates snapshots $(u_{l}(\mu), f_{l}(\mu), \lambda_{l}(\mu)), \mu \in \Xi_{train}$, which  is least well approximated by $X^{N-1}_{lg}(\Omega) \times Y^{N-1}_{lg}(\Omega) \times \lambda^{N-1}_{lg}(\Omega)$.

Firstly, we consider the residual errors for local reduced  model. By the first equation of system (\ref{optimal-system-gms}), we can get
\[a(u_{l}-u^{N}_{l}+u^{N}_{l}, \widetilde{u}_{l};\mu)=(f_{l}-f^{N}_{l}+f^{N}_{l};\mu),\quad \forall \widetilde{u}_{l} \in \mathscr{V}_{lr}(\Omega),\]
that is,
\[a(u_{l}-u^{N}_{l}, \widetilde{u}_{l};\mu)-(f_{l}-f^{N}_{l}, \widetilde{u}_{l};\mu)= -a(u^{N}_{l}, \widetilde{u}_{l}; \mu)+(f^{N}_{l}, \widetilde{u}_{l};\mu),\quad \forall \widetilde{u}_{l} \in \mathscr{V}_{lr}(\Omega).\]

Let the error $e_{u}(\mu):= u_{l}(\mu)-u^{N}_{l}(\mu), e_{f}(\mu):= f_{l}(\mu)-f^{N}_{l}(\mu)$ and $r_{1}(\widetilde{u}_{l};\mu) \in \mathscr{V}^{*}_{lr}(\Omega)$ (the dual space of $\mathscr{V}_{lr}(\Omega)$) be the residual
\[ r_{1}(\widetilde{u}_{l};\mu)= -a(u^{N}_{l}, \widetilde{u}_{l}; \mu)+(f^{N}_{l},\widetilde{u}_{l};\mu),\quad \forall \widetilde{u}_{l} \in \mathscr{V}_{lr}(\Omega).\]
Then we can obtain
\[a(e_{u}(\mu),\widetilde{u}_{l};\mu)-(e_{f}(\mu),\widetilde{u}_{l};\mu) = r_{1}(\widetilde{u}_{l};\mu).\]
By the Riesz representation theory, there exists a function $\hat{e}_{1}(\mu)$ such that
\begin{equation}
\label{error-function}
(\hat{e}_{1}(\mu),\widetilde{u}_{l})_{\mathscr{V}_{l}} = r_{1}(\widetilde{u}_{l};\mu), \quad \forall \widetilde{u}_{l} \in \mathscr{V}_{lr}(\Omega).
\end{equation}
Consequently, the dual norm of the residual $r_{1}(\widetilde{u}_{l};\mu)$ can be evaluated by the Riesz representation,
\[\|r_{1}(\widetilde{u}_{l};\mu)\|_{\mathscr{V}^{*}_{lr}}:= \sup_{\widetilde{u}_{l}\in\mathscr{V}_{lr}}\frac{r_{1}(\widetilde{u}_{l};\mu)}{\|\widetilde{u}_{l}\|_{\mathscr{V}_{lr}}}
=\|\hat{e}_{1}(\mu)\|_{\mathscr{V}_{lr}}.\]

The computation of the residual is important to perform the offline-online computation decomposition. Combined by  (\ref{affine}) and (\ref{Rb-repre}), the residual can be expressed as
\begin{equation}
\label{Residual}
\begin{split}
r_{1}(\widetilde{u}_{l};\mu)&= -a(u^{N}_{l}, \widetilde{u}_{l};\mu)+(f^{N}_{l},\widetilde{u}_{l};\mu)\\
&=-\sum^{2N}_{i=1}u^{\mathcal{N}}_{i}(\mu) a(\delta_{i}, \widetilde{u}_{l};\mu)+ \sum^{N}_{j}f^{\mathcal{N}}_{j=1}(\mu)(\xi_{j},\widetilde{u}_{l})\\
&=-\sum^{Q_{a}}_{q=1}\sum^{2N}_{i=1}\theta_{q}(\mu) u^{\mathcal{N}}_{i}(\mu) a_{q}(\delta_{i}, \widetilde{u}_{l})+ \sum^{N}_{j=1}f^{\mathcal{N}}_{j=1}(\mu)(\xi_{j},\widetilde{u}_{l}).
\end{split}
\end{equation}
By (\ref{error-function}) and (\ref{Residual}), we have
\[(\hat{e}_{1}(\mu),\widetilde{u}_{l})_{\mathscr{V}_{lr}}=-\sum^{Q_{a}}_{q=1}\sum^{2N}_{i=1}\theta_{q}(\mu) u^{\mathcal{N}}_{i}(\mu) a_{q}(\delta_{i},\widetilde{u}_{l})
+\sum^{N}_{j=1}f^{\mathcal{N}}_{j}(\mu)(\xi_{j},\widetilde{u}_{l}).
\]
This implies that
\begin{equation}
\label{residual-expression}
\begin{split}
\hat{e}_{1}(\mu)=-\sum^{Q_{a}}_{q=1}\sum^{2N}_{i=1}\theta_{q}(\mu) u^{\mathcal{N}}_{i}(\mu)L^{\delta}_{i}+
\sum^{N}_{j=1}f^{\mathcal{N}}_{j}(\mu) L^{\xi}_{j},
\end{split}
\end{equation}
where $L^{\delta}_{i}, L^{\xi}_{j}$ are Riesz representations of $a_{q}(\delta_{i},\widetilde{u}_{l})$ and $(\xi_{j},\widetilde{u}_{l})$, i.e., $(L^{\delta}_{i},\widetilde{u}_{l})_{\mathscr{V}_{lr}}= a_{q}(\delta_{i},\widetilde{u}_{l})$ and
$(L^{\xi}_{j},\widetilde{u}_{l})_{\mathscr{V}_{lr}}=(\xi_{j},\widetilde{u}_{l})$ for all $\widetilde{u}_{l} \in \mathscr{V}_{lr}(\Omega)$, respectively. We thus obtain
\begin{equation}
\label{residual-norm}
\begin{split}
\|\hat{e}_{1}(\mu)\|^{2}_{\mathscr{V}_{lr}}&= \sum^{Q_{a}}_{q=1}\sum^{Q_{a}}_{q'=1}\sum^{2N}_{i=1}\sum^{2N}_{i'=1} \theta_{q}(\mu)\theta_{q'}(\mu)u^{\mathcal{N}}_{i}u^{\mathcal{N}}_{i'}(L^{\delta}_{i}, L^{\delta}_{i'})_{\mathscr{V}_{lr}}\\
&+\sum^{N}_{j=1}\sum^{N}_{j'=1}f^{\mathcal{N}}_{j}(\mu)f^{\mathcal{N}}_{j'}(\mu)
(L^{\xi}_{j}, (L^{\xi}_{j'})_{\mathscr{V}_{lr}}
-2\sum^{Q_{a}}_{q=1}\sum^{2N}_{i=1}\sum^{N}_{j=1}\theta_{q}(\mu)f^{\mathcal{N}}_{j}(\mu)
(L^{\delta}_{i},L^{\xi}_{j})_{\mathscr{V}_{lr}}.
\end{split}
\end{equation}
Combining (\ref{error-function}) and (\ref{residual-norm}) gives the calculation of the dual norm of the residual $\|r_{1}(\widetilde{u}_{l};\mu)\|_{\mathscr{V}^{*}_{lr}}$.
With the similar way, we can define residuals $r_{2}(\widetilde{\lambda}_{l};\mu)$ and $r_{3}(\widetilde{f}_{l};\mu)$ for the rest equations of system (\ref{optimal-system-gms}) with the corresponding functions $\hat{e}_{2}(\mu)$ and  $\hat{e}_{3}(\mu)$.

Next, we define the error estimator as
\begin{equation}
\label{error-estimator}
\begin{split}
\vartriangle_{N}(\mu)&= \bigg(\|r_{1}(\widetilde{u}_{l};\mu)\|^{2}_{\mathscr{V}^{*}_{lr}}+\|r_{2}(\widetilde{u}_{l};\mu)\|^{2}_{\mathscr{V}^{*}_{lr}}
+\|r_{3}(\widetilde{u}_{l};\mu)\|^{2}_{L^{2}}\bigg)^{\frac{1}{2}}\\
&=(\|\hat{e}_{1}(\mu)\|^{2}_{\mathscr{V}_{lr}}+\|\hat{e}_{2}(\mu)\|^{2}_{\mathscr{V}_{lr}}
+\|\hat{e}_{3}(\mu)\|^{2}_{L^{2}})^{\frac{1}{2}}.
\end{split}
\end{equation}
Let  $\varepsilon^{*}_{tol}$ be
a chosen tolerance for the stopping criterium. The greedy sampling strategy is described  in
Algorithm \ref{greedy-alg}.\\
\begin{algorithm}
\caption{Greedy algorithm for selecting optimal parameter samples $S_N$}
\textbf{Input}:  A training set $\Xi_{train}\subset \Gamma$, a tolerance $\varepsilon^{*}_{tol}$, a maximum number $N_{max}$\\
 \textbf{Output}:  $S_{N}$ and $X^{N}_{lg}, Y^{N}_{lg}, Z^{N}_{lg}$\\
~1:~~  \textbf{Initialization}: $\mu_{1}= \text{mean}(\Xi_{train})$, $S_{1}=\{\mu_{1}\}$;\\
~2:~~  compute $u_{l}(\mu_{1}), f_{l}(\mu_{1}), \lambda_{l}(\mu_{1})$ by local model reduction method;\\
~3:~~  obtain $X^{1}_{lg}=\text{span}\{u_{l}(\mu_{1}),\lambda_{l}(\mu_{1})\}$, $Y^{1}_{lg}=\text{span}\{f_{l}(\mu_{1})\}$, $Z^{1}_{lg}=X^{1}_{lg}$;\\
~4:~~  $\Xi_{train}$ = $\Xi_{train}\setminus \mu_{1}$;\\
~6:~~  apply Gram-Schmidt process to $X^{1}_{lg}$, $Y^{1}_{lg}$, $Z^{1}_{lg}$;\\
~7:~~  $\varepsilon_{1}=\max\limits_{\mu \in \Xi_{train}}\vartriangle_{1}(\mu)$;\\
~8:~~  N=1;\\
~9:~~  $\mathbf{while}~\varepsilon_{N} \leq \varepsilon^{*}_{tol}$ or $N \leq N_{max}$\\
~10:~~  $N=N+1$;\\
~11:~~  $\mu_{N}= \text{argmax}_{\mu \in \Xi_{train}} \vartriangle_{N-1}(\mu)$;\\
~12:~~  $S_{N} =S_{N-1} \cup \{\mu_{N}\}$; \\
~13:~~  compute $u_{l}(\mu_{N}), f_{l}(\mu_{N}), \lambda_{l}(\mu_{N})$ by local model reduction method;\\
~14:~~  update the reduced spaces $X^{N}_{lg}=X^{N-1}_{lg}\cup\text{span}\{u_{l}(\mu_{N}),\lambda_{l}(\mu_{N})\}$,\\
       $~~~~~$ $f^{N}_{lg} =f^{N-1}_{lg}\cup\text{span}\{f_{l}(\mu_{N})\}$, $Z^{N}_{lg} = X^{N}_{lg}$;\\
~15:~~  apply Gram-Schmidt process to $X^{N}_{lg}$, $Y^{N}_{lg}$, $Z^{N}_{lg}$;\\
~16:~~  $\Xi_{train}$ = $\Xi_{train}\setminus \mu_{N}$;\\
~17:~~  $\varepsilon_{N}=\max\limits_{\mu \in \Xi_{train}}\vartriangle_{N}(\mu)$;\\
 \textbf{end~while}
 \label{greedy-alg}
\end{algorithm}

With the model reduction framework shown in Section \ref{lg-framework} and the above greedy sampling strategy, we
outline the local-global model reduction method in  algorithm \ref{local-global}.

\begin{algorithm}[htb]
\caption{Local-global model reduction method for stochastic optimal control problems.}
\textbf{Input}: Training set $\Xi_{train} \subset \Gamma$, Testing set $\Xi_{test} \subset \Gamma$\\
\textbf{Offline Stage:}\\
~1:~~ get spaces $X^{N}_{lg}(\Omega), Y^{N}_{lg}(\Omega), Z^{N}_{lg}(\Omega)$ by Algorithm \ref{greedy-alg};\\
~2:~~  calculate the local reduced matrix $R$ and the global reduced matrix $Z$;\\
 ~3:~~ \textbf{if} affine assumptions (\ref{affine}) holds, then \\
  $~~~~~$  compute the $\mu$-independent matrices $K^{q}_{h}$, $M_{1,h}$, $M_{2,h}$, $M_{3,h}$ and $\mu$-independent load vectors\\
  $~~~~~$  $\widehat{U}_{p}$ in (\ref{matrix-def});\\
  $~~~~~$ \textbf{else} \\
  $~~~~~$ use EIM to $a(\cdot,\cdot;\mu)$ and $(f,\cdot;\mu)$;\\
  ~4:~~ Go back to the step 4;\\
 $~~~~~$ \textbf{end if}  \\
\textbf{Online Stage:} For the $k_{th}$ test parameter $\mu_{k}$ in $\Xi_{test}$\\
~5:~~ update the $\mu$-dependent coefficients $Q^{q}_{a}(\mu_{k})$, $\widehat{u}_{p}(\mu_{k})$ in (\ref{matrix-def});\\
~6:~~ assemble the full matrix $\Lambda_{h}(\mu_{k})$ and right term $[0;\overline{\widehat{U}};d]^{T}$;\\
~7:~~ compute the reduced matrix $\Lambda_{N}(\mu_{k})$ and the reduced right hand $[0;\overline{\widehat{U}}_{lg};d_{lg}]^{T}$;\\
~8:~~ solve the system (\ref{system-lg}) for the reduced solutions $u_{lg}(\mu_{k})$, $F_{lg}(\mu_{k})$, $\lambda_{lg}(\mu_{k})$;\\
~9:~~ get the local-global RB solutions $\vec{F}_{lg}(\mu_{k})$, $\vec{u}_{lg}(\mu_{k})$ , $\vec{\lambda}_{lg}(\mu_{k})$ defined on fine grid;
\label{local-global}
\end{algorithm}

In the paper, we have focused on   the global existence and uniqueness of the optimal solution and the local-global model reduction framework for the distributed optimal control problem.
The existence and uniqueness of global optimal solution for boundary optimal control problem can be proved in a similar way, and the local-global model reduction method can also be used in the boundary optimal control problem.
We present a numerical example for the boundary optimal control problem in Section \ref{Ex3}.

\section{Numerical experiments}
\label{sect6}
A variety of numerical examples are presented to demonstrate the efficiency of the local-global model reduction method for stochastic optimal control problems. In this section, we focus on stochastic optimal control problems constrained by elliptic PDEs. In the following, we are going to show the relative errors and computation performance  for the control, state, and adjoint variables. In Section \ref{Ex1}, we consider the stochastic optimal control problem, where  the diffusion coefficient and target function are both random fields. The effect  of the regularization parameter $\beta$ will be discussed. In Section \ref{Ex2}, we consider the control problem which is defined on a random domain. The last example in Section \ref{Ex3}, we compute a Neumann boundary control problem with the local-global model reduction method. In this case, the diffusion coefficient and target function are random fields with high dimensional parameters. To demonstrate the efficiency of our presented model reduction method, we will list the detailed CPU time in the last two examples.

\subsection{Stochastic optimal control problem defined on deterministic domain}
\label{Ex1}
In the first example, we consider the target function $\hat{u}(x,\mu)$ and coefficient $\kappa(x,\mu)$ are both related to the parameter sample $\mu \in \Gamma$ and numerically explore the approximation of optimal solution using the  local-global model reduction method.

The computational domain is a two-dimensional unit square $\Omega = [0,1]^2$. We consider the optimal control problem
\begin{equation}
\label{model-ddom}
\begin{cases}
\begin{split}
 &\min\limits_{u,f} J=\frac{1}{2}\|u(x,\mu)-\hat{u}(x,\mu)\|
 ^2_{\mathscr{L}^2(\Omega)}+\beta\|f(x,\mu)\|^2_{\mathscr{L}^2(\Omega)}\\
&s.t.~~-div (\kappa(x,\mu)\nabla u)=f(x,\mu) ~\text{in}~\Omega\\
&\quad \quad \quad u|_{\partial \Omega}=g(x),
\end{split}
\end{cases}
\end{equation}
where  the diffusion coefficient $\kappa(x,\mu)$  and the desired state function $\hat{u}(x,\mu)$  are
\begin{equation*}
\begin{cases}
\begin{split}
\kappa(x,\mu)&= (\mu^2+(\mu+0.5)^2)\kappa_{1}(x)+(1+\exp(\mu)\cos(\mu/3))^2\kappa_{2}(x),\\
\hat{u}(x,\mu)&= x_{1}x_{2}(x_{1}+1)(x_{2}-1)\mu+x^{2}_{1}x_{2}(x_{1}-1)(x_{2}+1)\cos(\mu)\\
&\quad +x_{1}x^{3}_{2}(x_{1}-1)(x_{2}-1)\mu^2+\exp(x_{1}/3)x^{2}_{2}\sin(\mu).
\end{split}
\end{cases}
\end{equation*}
Here $x=(x_{1},x_{2})\in \Omega$ and the random variable $\mu \sim Beta(\theta_{1},\theta_{2})$ obey beta distribution with
two shape parameters $\theta_{1},\theta_{2} \in \mathds{N}_{+}$. In this case, $(\theta_{1},\theta_{2})$ take the value (1,1). $\kappa_{1}(x)$ and $\kappa_{2}(x)$ are independent of $\mu$.
The $\kappa_{1}(x)$ and $\kappa_{2}(x)$ are  high-contrast functions and their  maps  are depicted in Fig.~\ref{osci-coeff}.

\begin{figure}[!h]
\centering
\subfigure{\includegraphics[width=3.1in, height=2.1in,angle=0]{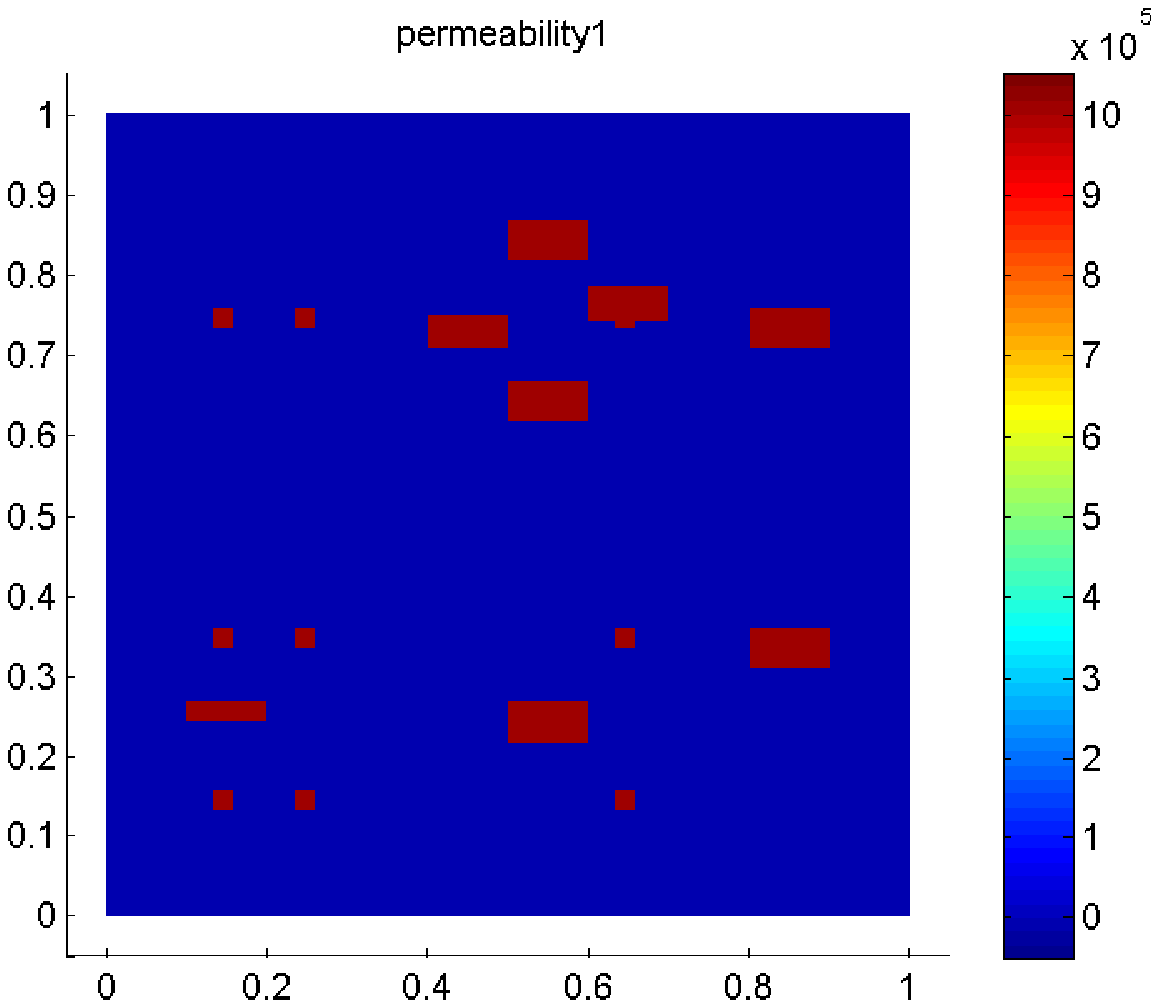}}
\subfigure{\includegraphics[width=3.1in, height=2.1in,angle=0]{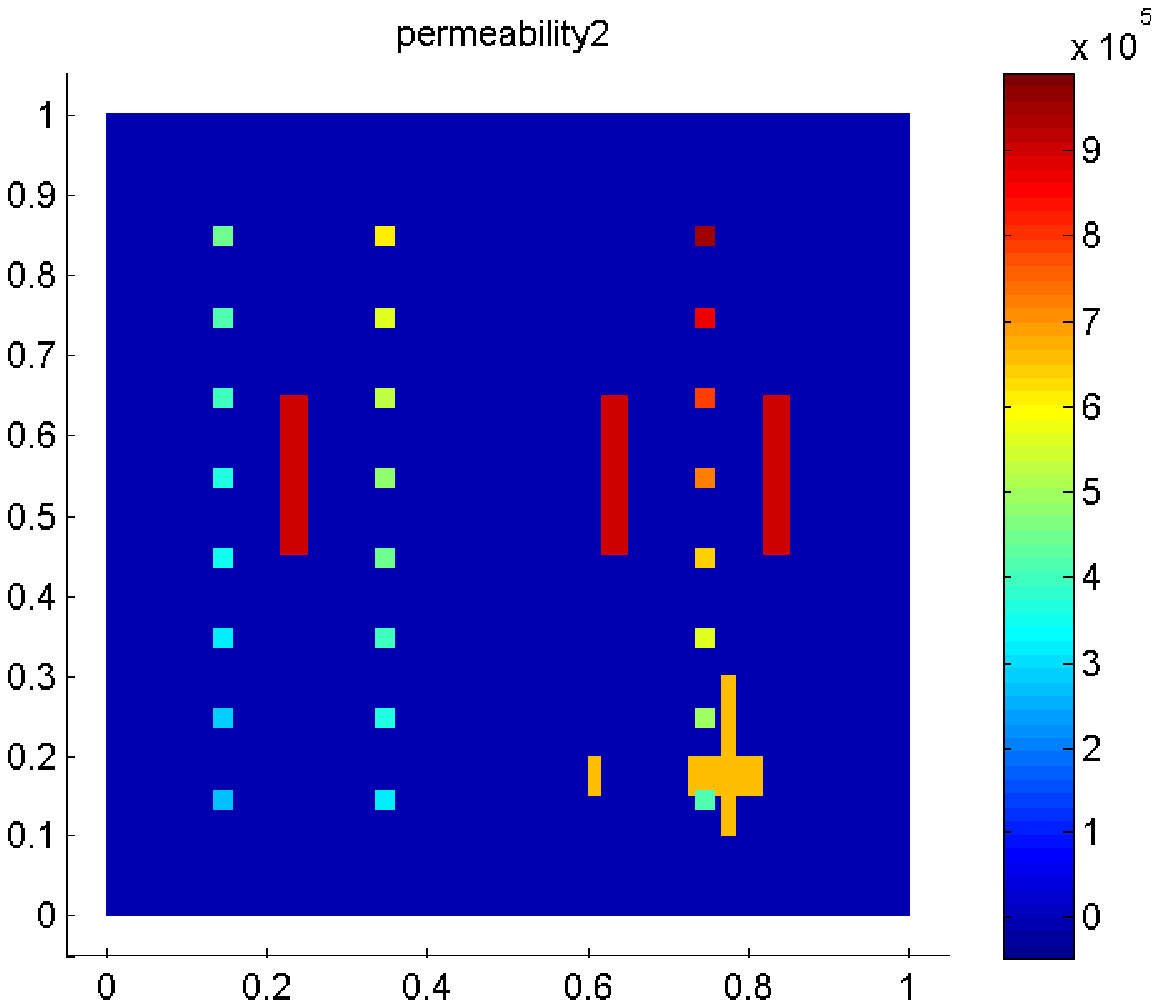}}
\caption{\em{High-contrast coefficients $\kappa_{1}$ (left) and $\kappa_{2}$ (right).} }
\label{osci-coeff}
\end{figure}

In this example, we use $120\times 120$ uniform fine grid to compute the reference optimal solutions $(u_{ref}, f_{ref}, \lambda_{ref})$. The local-global model reduction solutions $(u_{lg}, f_{lg}, \lambda_{lg})$ are computed on $10\times 10$ coarse mesh. We define the  relative errors  for the state variable $u$, the control variable $f$ and the adjoint variable by
\begin{equation*}
\begin{cases}
\begin{split}
  &e^{2}_{u}=\frac{1}{N}\sum\limits^{N}_{i=1}\frac{\|u_{ref}(x,\mu_{i})-u_{lg}(x,\mu_{i})\|
  _{L^{2}(\Omega)}}{\|u_{ref}(x,\mu_{i})\|_{L^{2}(\Omega)}},\\
  &e^{2}_{f}=\frac{1}{N}\sum\limits^{N}_{i=1}\frac{\|f_{ref}(x,\mu_{i})-f_{lg}(x,\mu_{i})\|_{L^{2}(\Omega)}}
  {\|f_{ref}(x,\mu_{i})\|_{L^{2}(\Omega)}},\\
  &e^{2}_{\lambda}=\frac{1}{N}\sum\limits^{N}_{i=1}\frac{\|\lambda_{ref}(x,\mu_{i})-\lambda_{lg}(x,\mu_{i})\|
  _{L^{2}(\Omega)}}{\|\lambda_{ref}(x,\mu_{i})\|_{L^{2}(\Omega)}}.\\
\end{split}
\end{cases}
\end{equation*}

The contour plot of the target solution $\hat{u}$ is given in Fig.~\ref{target-fun}, and contour plots of the control $f$ and the state $u$ for the three values of $\beta$ are given in Fig.~\ref{diff-beta}. For the optimal control problem governed by PDE, the regularization parameter plays an important role. For small $\beta$, the control variable is not heavily penalized, and so the state may be closed to the desired state. However, given a large $\beta$, it is hard for the state variable to be near to the desired state in the relative $L^{2}$-norm because  the input of control contributes more heavily into the cost functional. In  Table \ref{beta-data}, we demonstrate the relative $L^{2}$ errors about the state variable $u$ and the control variable $f$ for different $\beta$. Moreover, we compute the minimal values of cost functional. From the figures and the table, we can find that it is necessary to find a suitable regularization parameter for the optimal solution.
\begin{figure}[!h]
\centering
\includegraphics[width=3.1in, height=2.1in,angle=0]{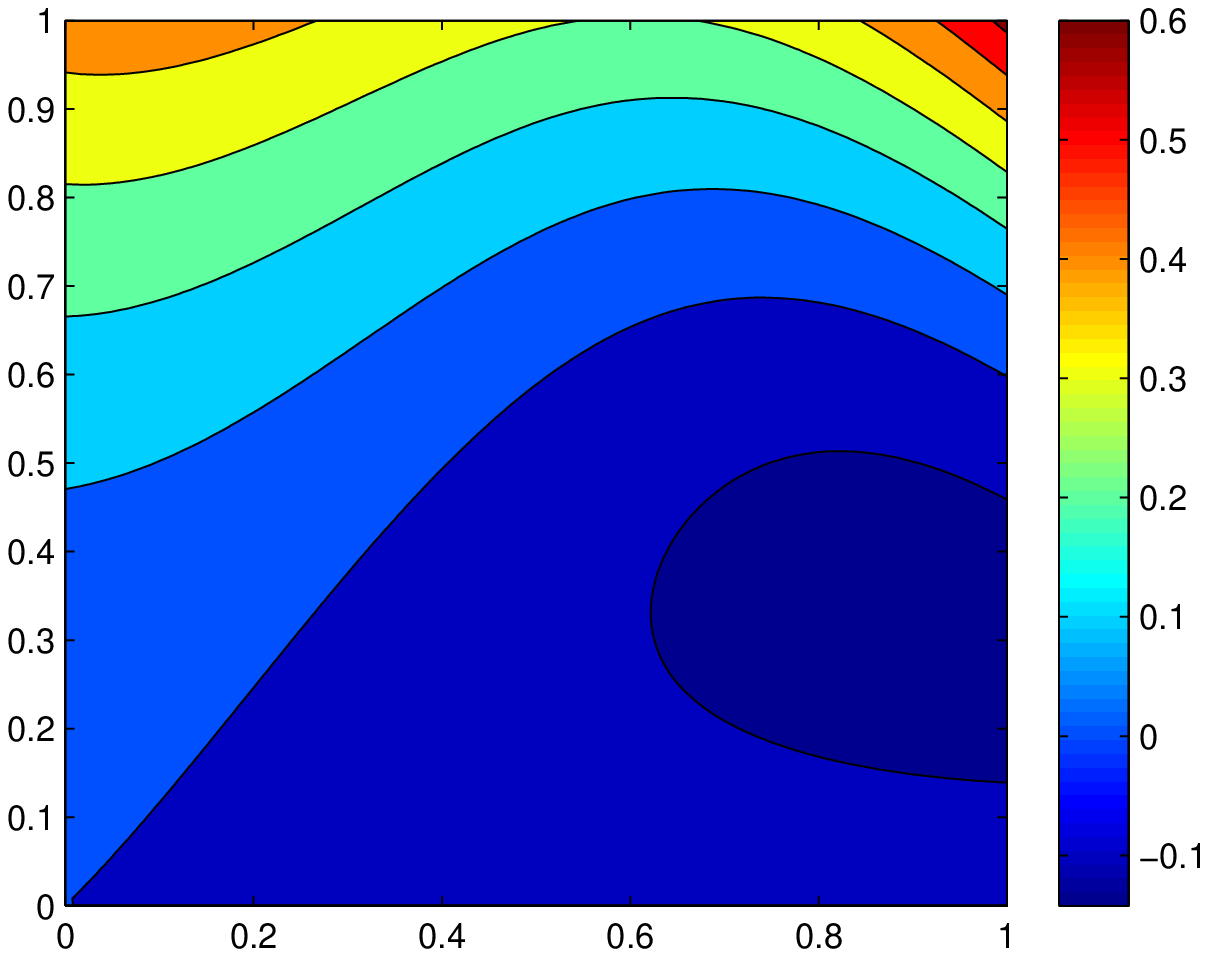}
\caption{\em{Contour plot of $\hat{u}(x,\mu)$ at the $\overline{\mu}$.}}
\label{target-fun}
\end{figure}

\begin{figure}[!h]
\centering
\subfigure[u, $\beta=1\times 10^{-2}$]{\includegraphics[width=2in, height=2in,angle=0.3]{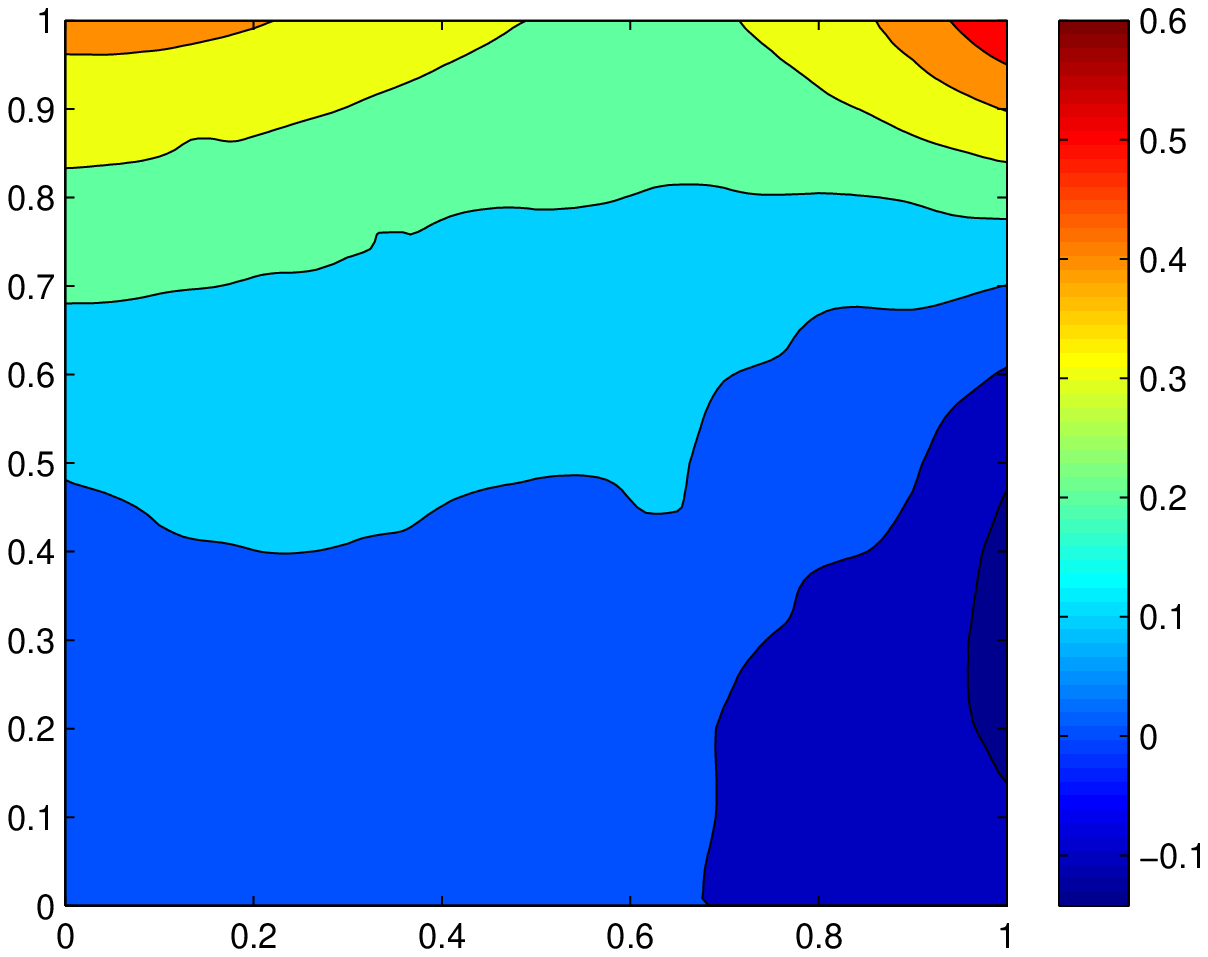}}
\subfigure[u, $\beta=2\times10^{-4}$]{\includegraphics[width=2in, height=2in,angle=0.3]{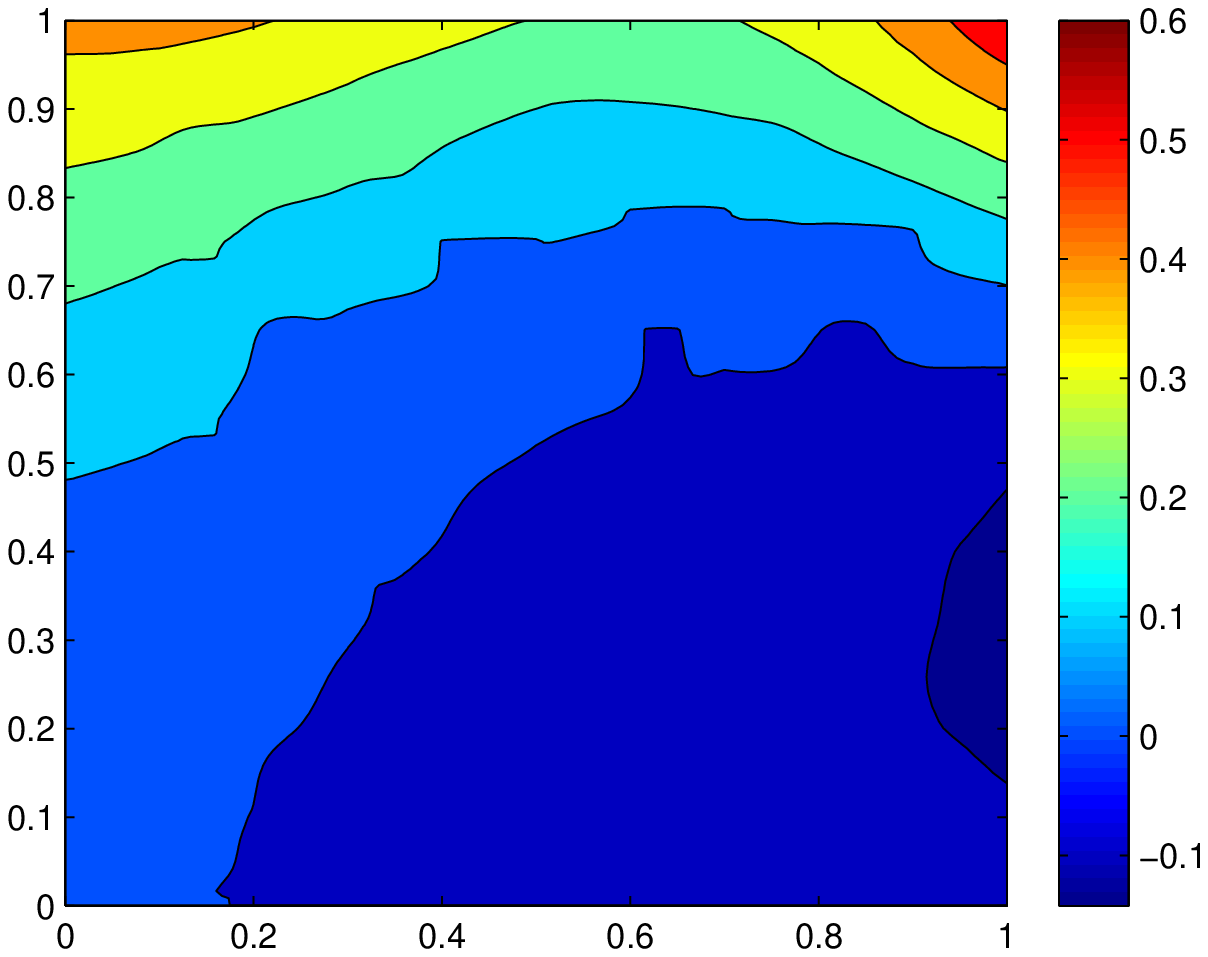}}
\subfigure[u, $\beta=0.5\times10^{-5}$]{\includegraphics[width=2in, height=2in,angle=0.3]{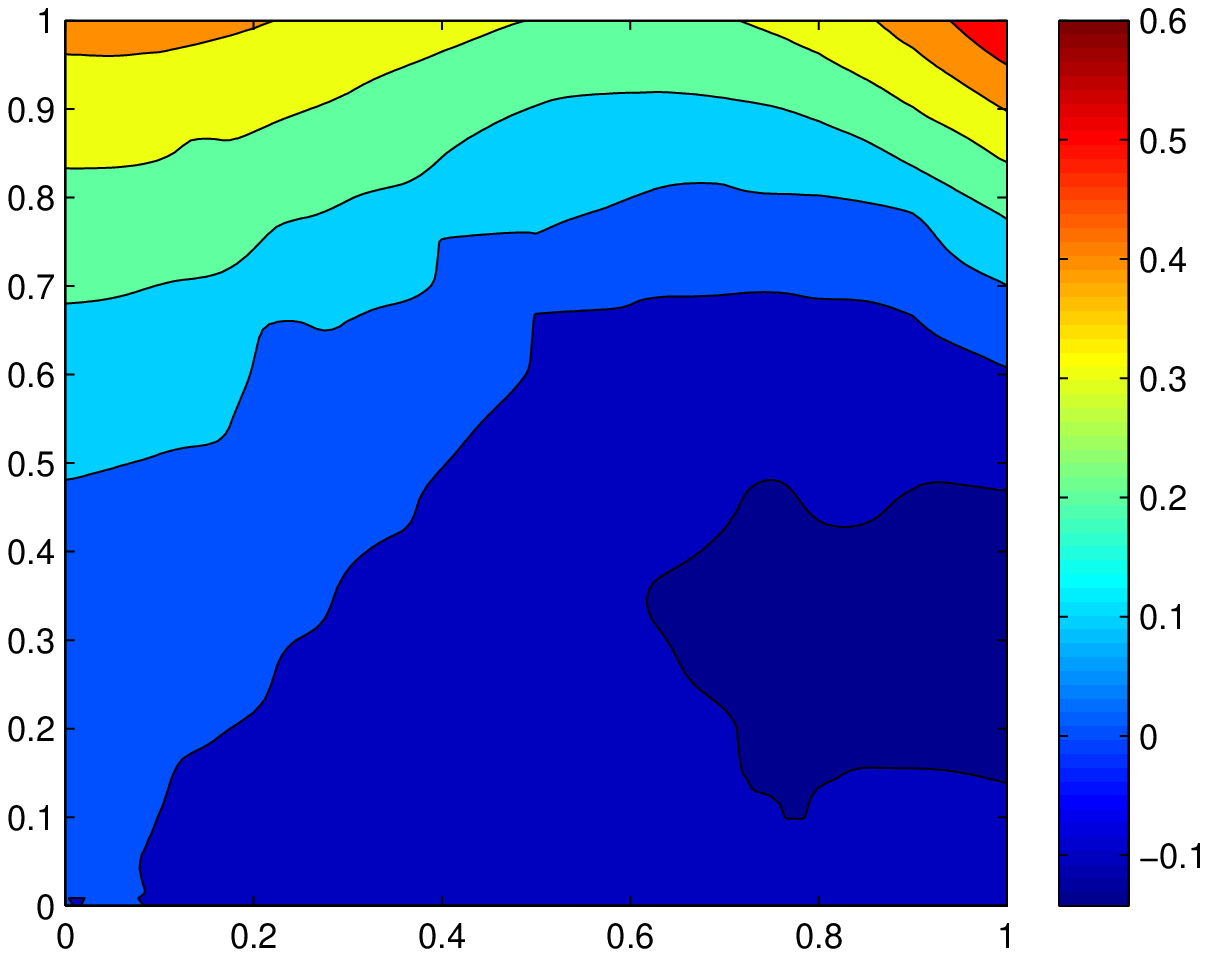}}
\subfigure[f, $\beta=1\times 10^{-2}$]{\includegraphics[width=2in, height=2in,angle=0.3]{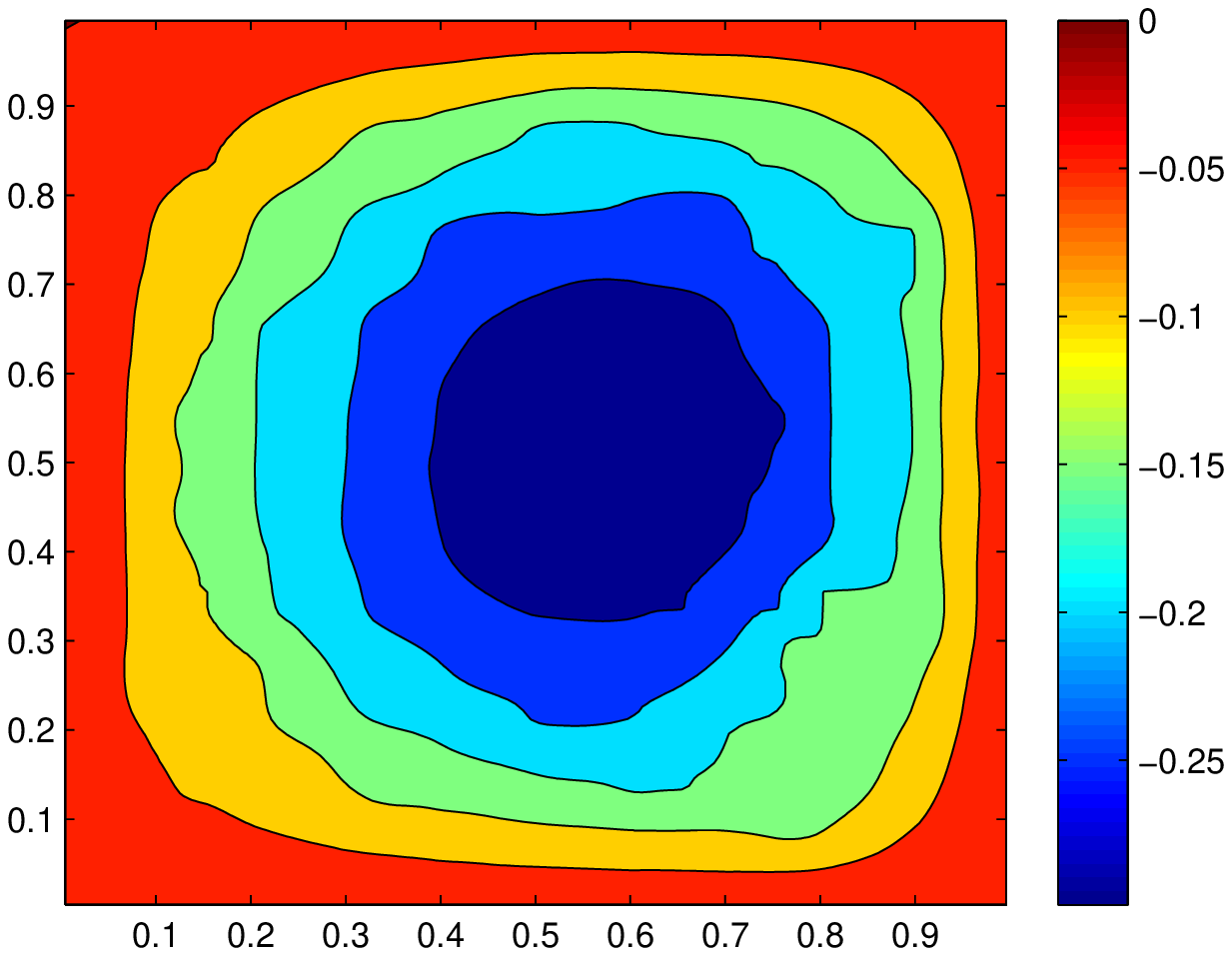}}
\subfigure[f, $\beta=2\times10^{-4}$]{\includegraphics[width=2in, height=2in,angle=0.3]{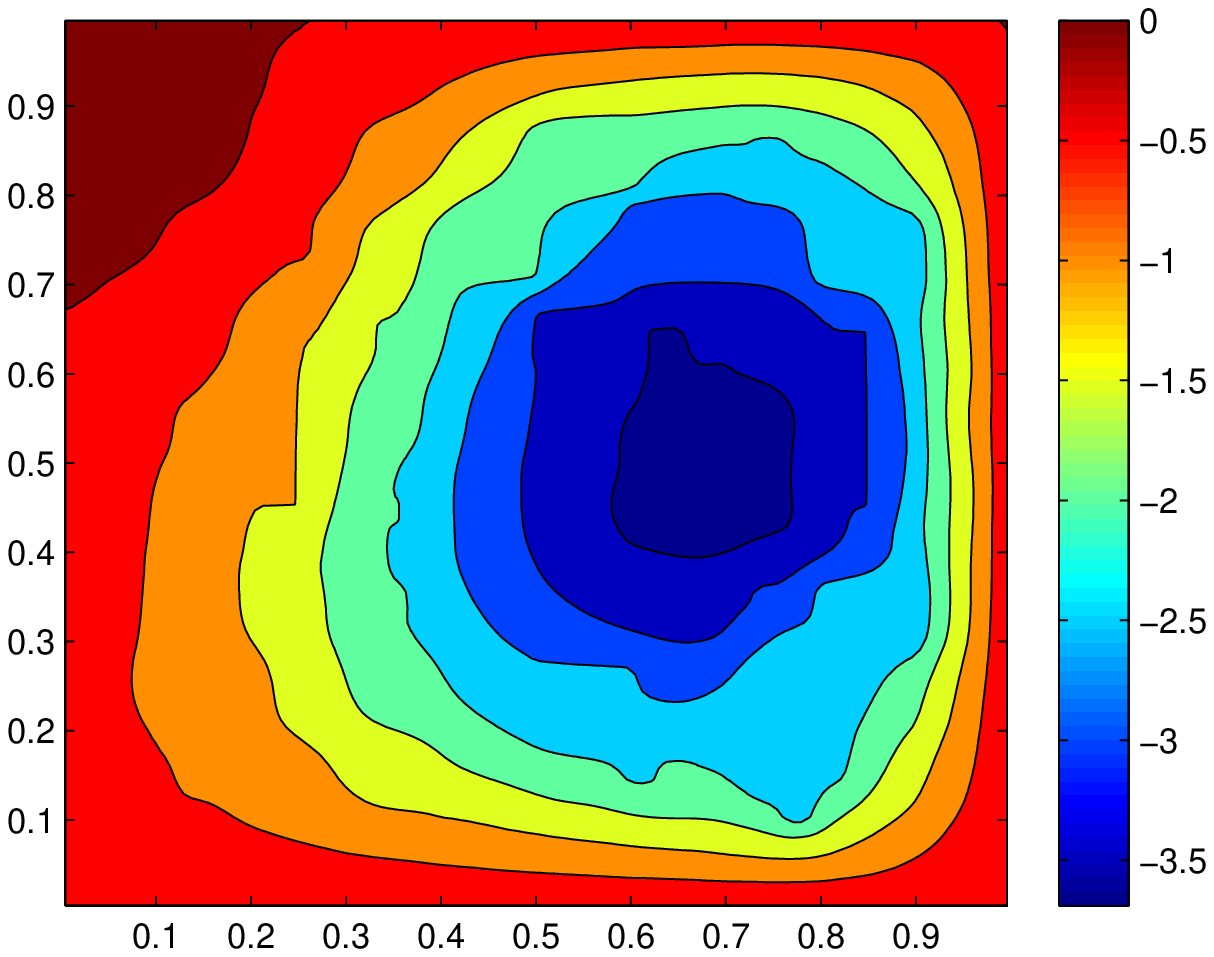}}
\subfigure[f, $\beta=0.5\times10^{-5}$]{\includegraphics[width=2in, height=2in,angle=0.3]{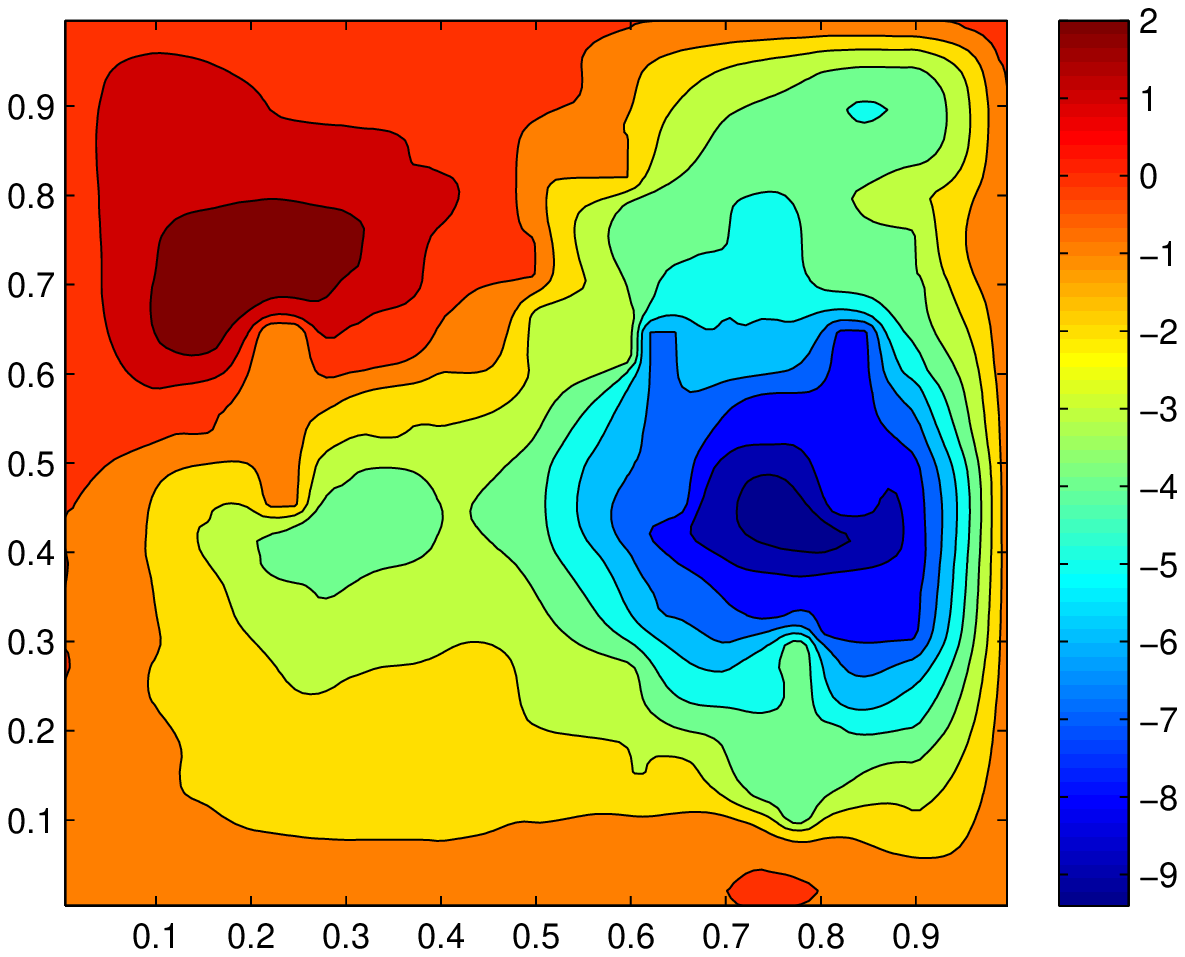}}
\caption{\em{Contour plots of the state $u$,  the control $f$ for $\beta = 10^{-2}$,  $2 \times10^{-4}$,  $0.5\times10^{-5}$.}}
\label{diff-beta}
\end{figure}

\begin{table}[!htbp]\center
\renewcommand\arraystretch{1.2}
\scriptsize
    \begin{tabular}{c| c c c c}
    \hline
    $\beta$  & $1\times 10^{-2}$ &$2\times10^{-4}$ &$0.5\times10^{-5}$ \\
   \hline
    $e^{2}_{u}$     & 1.057E-02 & 1.033E-02       & 9.547E-03 \\
    $e^{2}_{f}$     & 1.779E-02 & 2.627E-02       & 5.586E-02 \\
    $J_{min}$     & 2.048E-02 & 3.974E-06       & 3.753E-05 \\
    \hline
    \end{tabular}
    \caption{\em{The relative $L^{2}$ errors with different
    regularization parameter $\beta$ for the state variable
    $u$, the control variable $f$ and the cost functional $J$.}}
    \label{beta-data}
\end{table}
Next we fix the regularization parameter $\beta$, the number of local basis functions $L$ for each coarse element and choose five global basis functions. To discuss the effect of coarse mesh size, we consider some different coarse mesh sizes in the example,  $H=\{1/5, 1/6, 1/8, 1/10, 1/12\}$. The relative $L^{2}$ errors and the corresponding
minimal values of cost functional are listed in Table \ref{H-data}. From the table, we can see that the approximation for the state variable $u$ and the control variable $f$ are improved as the coarse gird is refined. The minimal values of the cost functional $J$ get smaller as the coarse mesh becomes finer.

\begin{table}[!htbp]\center
\renewcommand\arraystretch{1.2}
\scriptsize
    \begin{tabular}{c|c c c c c}
    \hline
    $Coarse~mesh~size$ & $H=1/5$     &$H=1/6$      &$H=1/8$      &$H=1/10$     &$H=1/12$\\
   \hline
    $e^{2}_{u}$     & 5.229E-01 & 3.829E-01 & 2.425E-01 & 1.057E-02 & 7.743E-03 \\
    $e^{2}_{f}$     & 6.920E-01 & 7.117E-01 & 4.569E-01 & 1.779E-02 & 1.445E-02 \\
    $J_{min}$     & 4.003E-03 & 2.039E-03 & 9.756E-04 & 4.111E-04 & 4.103E-04 \\
    \hline
    \end{tabular}
    \caption{\em{The relative $L^{2}$ errors with different
    coarse mesh size $H$ for the state variable
    $u$, the control variable $f$ and the minimal value $J_{min}$.}}
    \label{H-data}
\end{table}

\subsection{Stochastic optimal control problem defined on random domain}
\label{Ex2}
In this subsection,  we consider the stochastic optimal control problem described by (\ref{model-rdom1}) defined in  a random domain $\Omega(\mu)=\{(x_{1},x_{2})| 0\leq x_{1}\leq 1, s(x_{1},\mu)\leq x_{2}\leq 1, \mu \in \Gamma\}$,
i.e.,
\begin{equation}
\label{model-rdom1}
\begin{cases}
\begin{split}
 &\min\limits_{u,f} J=\frac{1}{2}\|u(x,\mu)-\hat{u}(x)\|
 ^2_{\mathscr{L}^2(\Omega(\mu))}+\beta\|f(x,\mu)\|^2_{\mathscr{L}^2(\Omega(\mu))},\\
&s.t.~~-div (\kappa(x)\nabla u)=f(x) ~\text{in}~\Omega(\mu),\quad u|_{\partial \Omega}=g(x).
\end{split}
\end{cases}
\end{equation}
Here the coefficient $\kappa(x)$ and target function $\hat{u}(x)$ are defined by
\[\kappa(x)=|x_{1}x_{2}|+1, \quad
\hat{u}(x)=x_{1}x_{2}(x_{1}-1)(x_{2}-\frac{x_{1}}{2}-1)+1.
\]

To be specific, we treat the rough bottom boundary as a random field $s(x_{1},\mu)$ with zero mean
$\mathbb{E}(s(x_{1},\mu))=0$ and an exponential two-point covariance function
\[ C_{s}(x_{1},z_{1})=\mathds{E}(s(x_{1},\mu),s(z_{1},\mu))=\exp(-|x_{1}-z_{1}|).
\]
With the finite-term Karhunen-Lo$\grave{e}$ve expansion (K-L expansion), $s(x_{1},\mu)$ can be approximated by
\[
s(x_{1},\mu)\approx \sigma \sum\limits^{N}_{n=1}\sqrt{\lambda_{n}}\phi_{n}(x)X_{n}(\omega),
\]
where $\{(\lambda_{n},\phi_{n})\}^{N}_{n=1}$ are solutions of the eigenvalue problem,
\[ \int_{\Gamma}C(x_{1},x_{2})\phi_{n}(x_{1})dx_{1}=\lambda_{n}\phi_{n}(x_{2}),\quad \forall x \in \Omega.\]
We set $\{X_{n}(\omega)\}\sim U(-1,1)$ to be independent uniform random variables and use the parameter
$0< \sigma<1$ to control the maximum deviation of the rough surface.

In Fig.~\ref{random-domain}, we employ the K-L expansion to represent the random boundary and show some realizations of the boundary.
\begin{figure}[!h]
\centering
\subfigure[]{\includegraphics[width=2in, height=2in,angle=0.3]{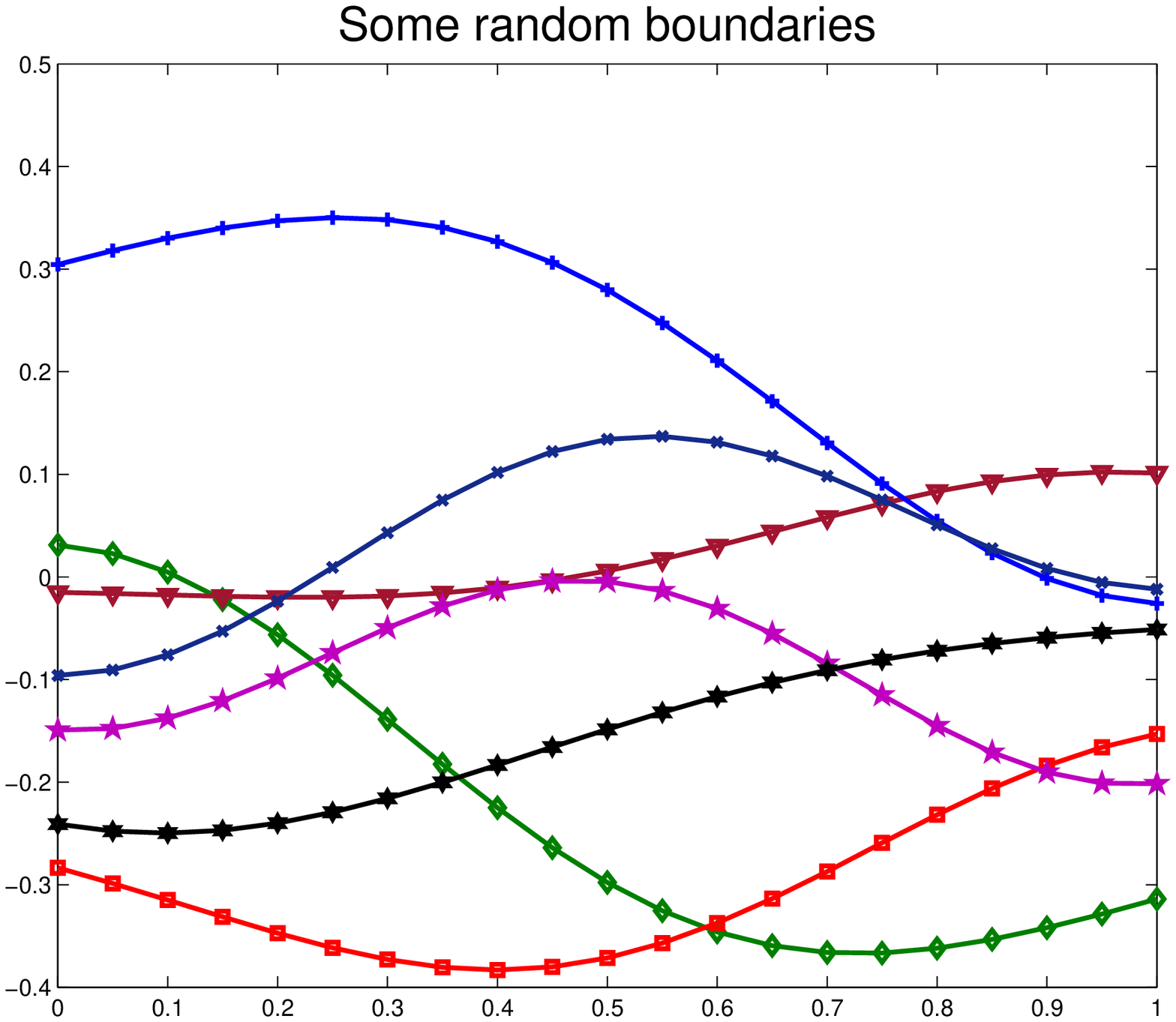}}
\subfigure[]{\includegraphics[width=2in, height=2in,angle=0.3]{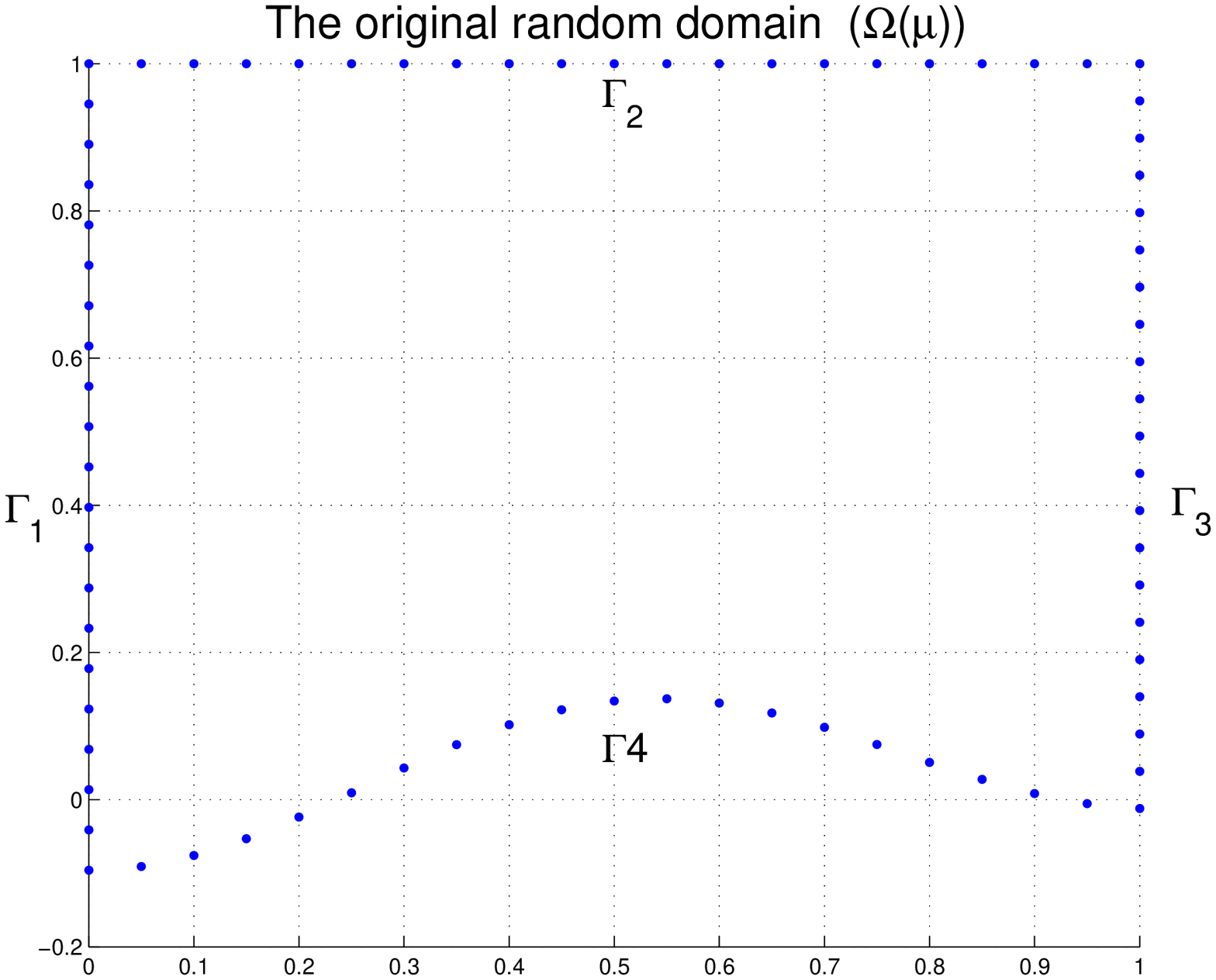}}
\subfigure[]{\includegraphics[width=2in, height=2in,angle=0.3]{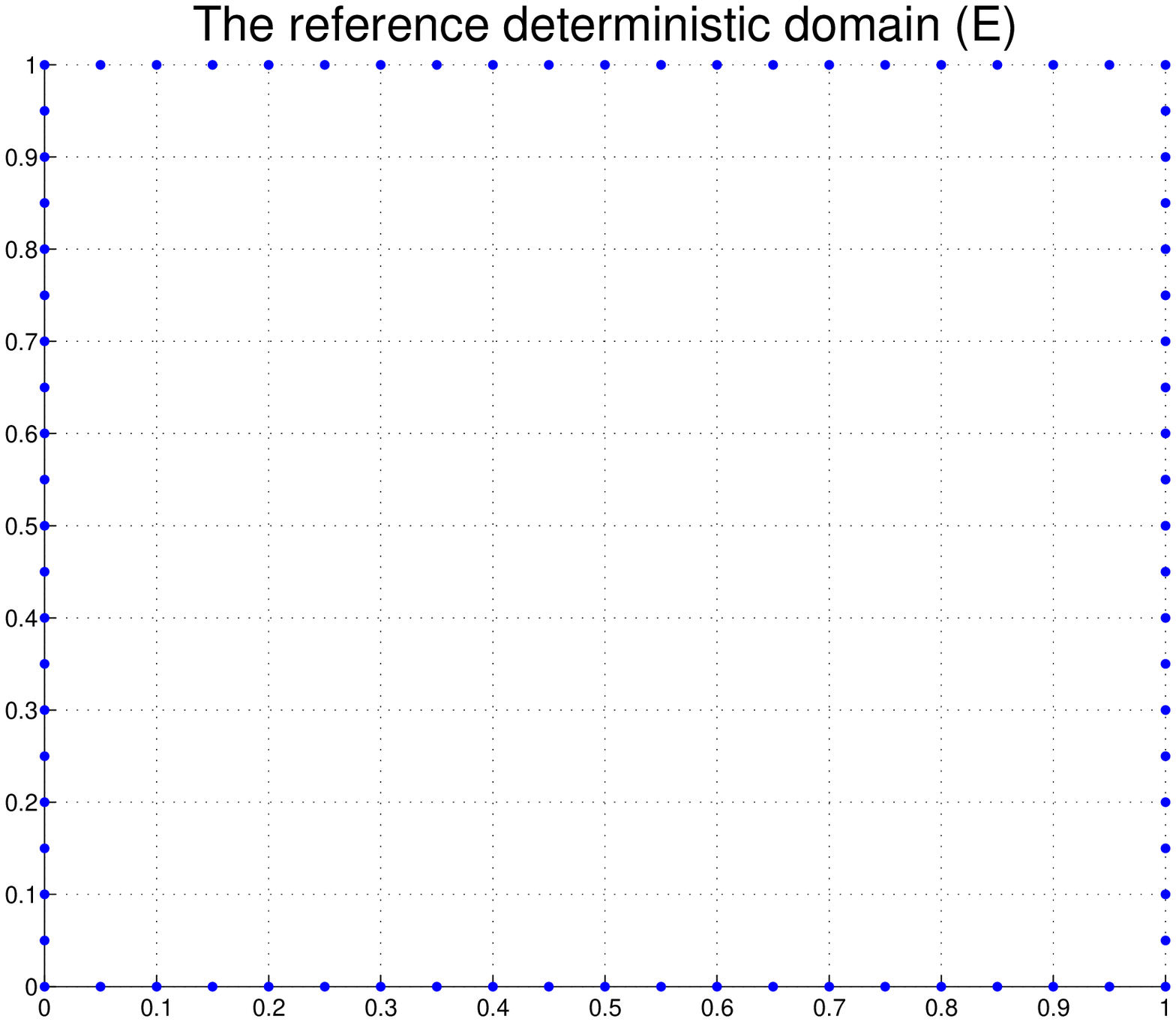}}
\caption{\em{(a) 7  realizations of the bottom boundary $s(x_{1},\mu)$ generated by five-term K-L expansion;  (b) a boundary realization  in the physical domain $(x_{1},x_{2})$; (c) the mapped domain $(\xi_{1},\xi_{2})$}.}
\label{random-domain}
\end{figure}
To treat the random domain, we need to formulate a stochastic map \cite{xt06}. The stochastic mapping of $\Omega(\mu)$ onto $E$ is constructed via the solutions of the Laplace equations,
\[\frac{\partial^{2}x_{1}}{\partial \xi^{2}_{1}}+\frac{\partial^{2}x_{1}}{\partial \xi^{2}_{2}}=0,\quad
\frac{\partial^{2}x_{2}}{\partial \xi^{2}_{1}}+\frac{\partial^{2}x_{2}}{\partial \xi^{2}_{2}}=0 \quad in~E,\]
subject to the boundary conditions
\begin{equation*}
\begin{split}
&x_{1}(0,\xi_{2})=x_{1}|_{\Gamma_{1}},\quad x_{1}(1,\xi_{2})=x_{1}|_{\Gamma_{3}}\\
&x_{1}(\xi_{1},0)=x_{1}|_{\Gamma_{4}},\quad x_{1}(\xi_{1},1)=x_{1}|_{\Gamma_{2}}\\
\end{split}
\end{equation*}
and
\begin{equation*}
\begin{split}
&x_{2}(0,\xi_{2})=x_{2}|_{\Gamma_{1}},\quad x_{2}(1,\xi_{2})=x_{2}|_{\Gamma_{3}}\\
&x_{2}(\xi_{1},0)=s(x_{1},\mu),\quad x_{2}(\xi_{1},1)=x_{2}|_{\Gamma_{2}},\\
\end{split}
\end{equation*}
where $x_{i}|_{\Gamma_{j}}$ denotes the $x_{i}$ coordinate along the boundary segment $\Gamma_{j}$. One can choose different distributions of boundary coordinates in $x$ as boundary conditions to achieve better computational results.  With various methods to construct the stochastic map, the map is not bijective in general. Thus, we will describe
all the numerical results on the mapped domain $E$.

We make $100\times 100$ fine grid to compute the reference solution and the number of degrees freedom $N_{f}=10201$ for fine scale FE method. The local model reduction computation (GMsFEM) is performed on the coarse grid $5\times 5$ with $180$ basis functions. For offline procedure, we  select $100$  optimal samples for snapshot functions, i.e.,  $n_{train}=100$. Similar to the definition of $L^{2}$ error, we define the energy error by
\[ e^{H}_{u}=\frac{1}{N}\sum\limits^{N}_{i=1}\frac{\|u_{ref}(x,\mu_{i})-u_{lg}(x,\mu_{i})\|_{a(E)}}
{\|u_{ref}(x,\mu_{i})\|_{a(E)}}
\]
The energy error $e^{H}_{\lambda}$ for the adjoint variable $\lambda$ can be defined similarly.

To evaluate the approximation for the local-global model reduction method, we randomly choose $1000$  parameter samples and compute the average relative $L^{2}$ errors and energy  errors. For a fixed regularization parameter ($\beta=10^{-4}$) and five local-global basis functions, the  mean and the variance of state and control variables are all shown in Fig.~\ref{ex-var}, where  the first row represents for reference solutions of optimal control problem (\ref{model-rdom1}). With local-global model reduction method, the mean and variance for solutions are plotted in second row. From the figure, the moments  of  optimal control $f$ and the moments  of  state $u$ using local-global reduced method have a good agreement with the reference results.

\begin{figure}[!h]
\centering
\subfigure[ref-mean-state]{\includegraphics[width=1.5in, height=1.7in,angle=0]{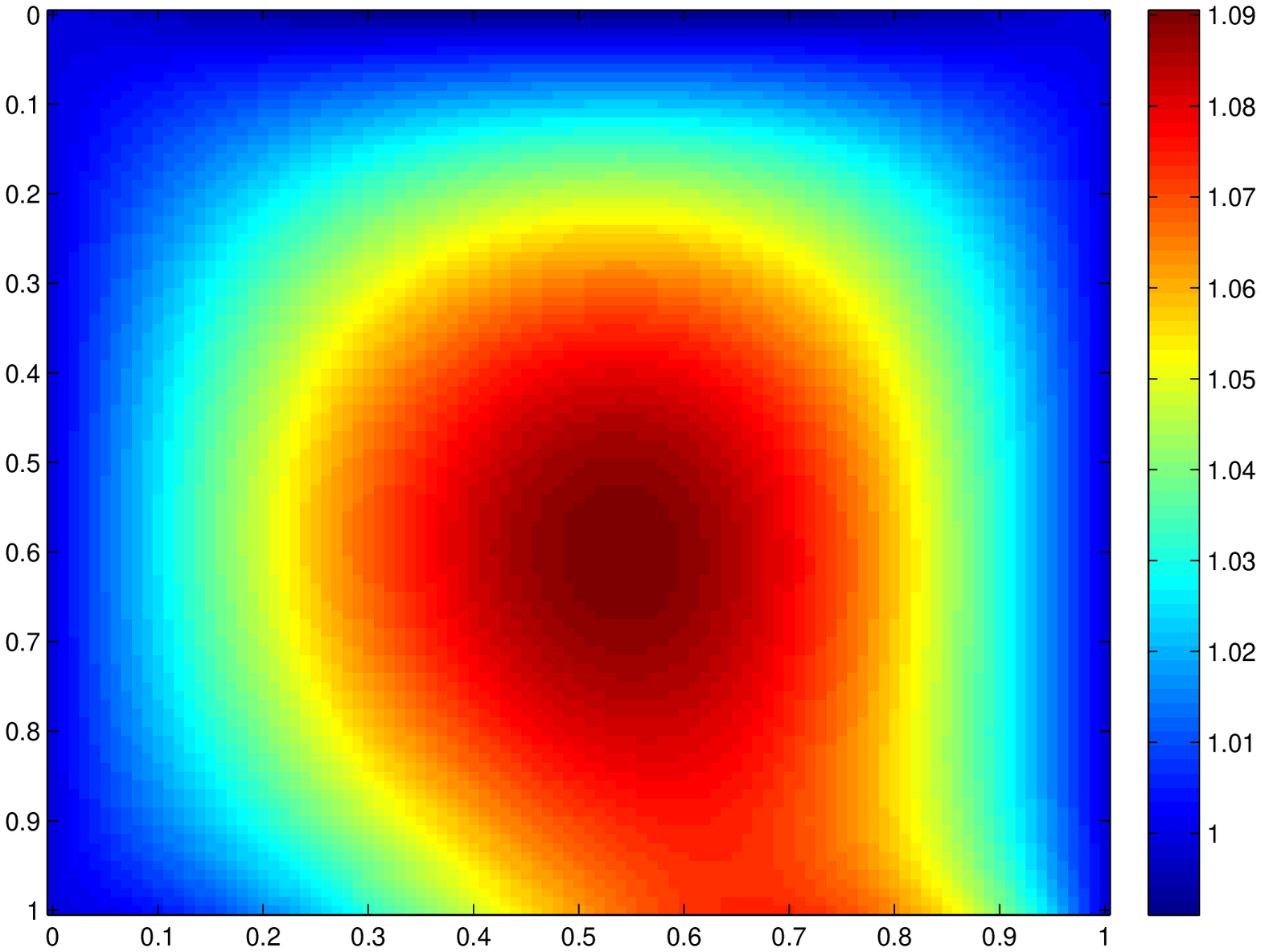}}
\subfigure[ref-variance-state]{\includegraphics[width=1.5in, height=1.7in,angle=0]{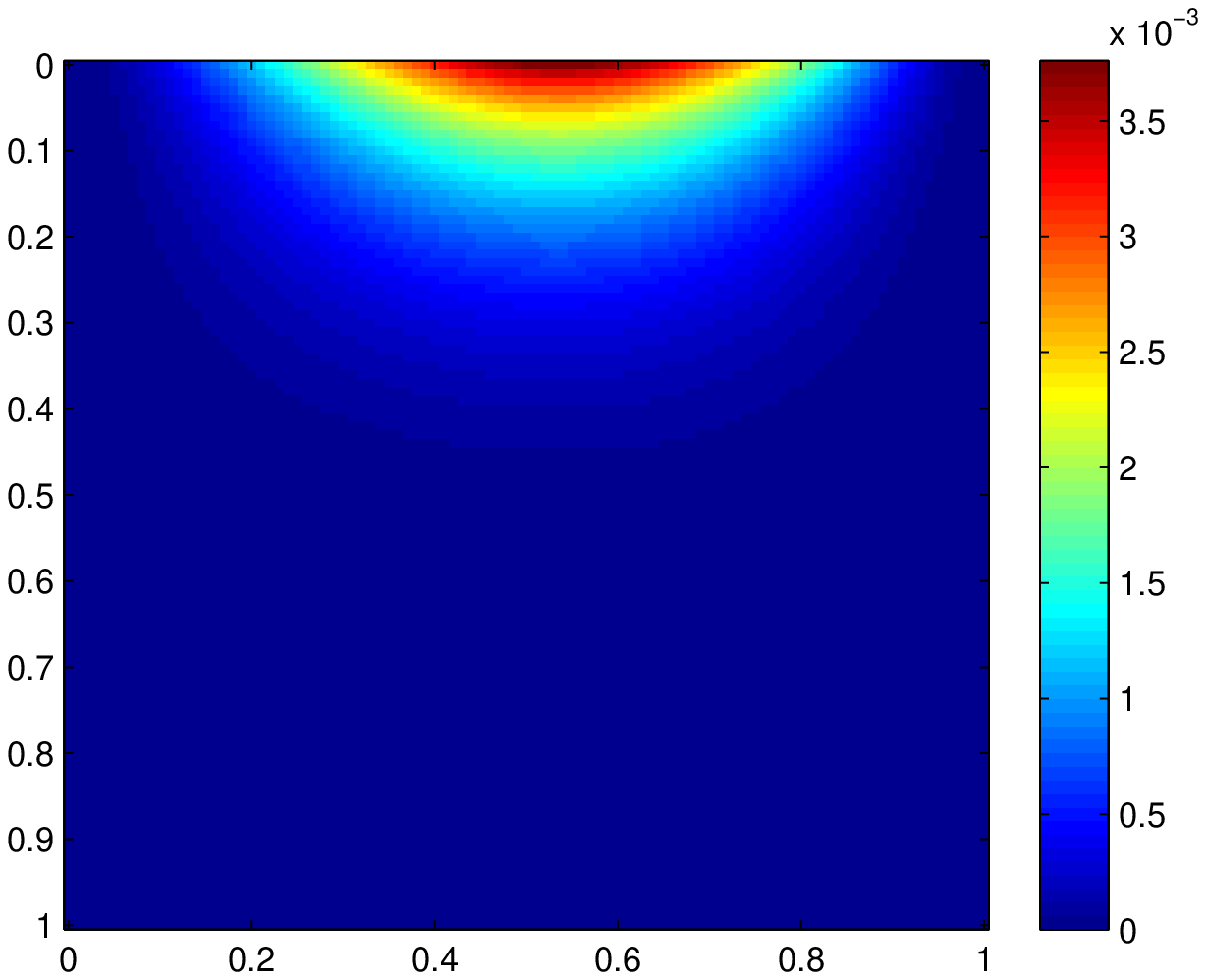}}
\subfigure[ref-mean-control]{\includegraphics[width=1.5in, height=1.7in,angle=0]{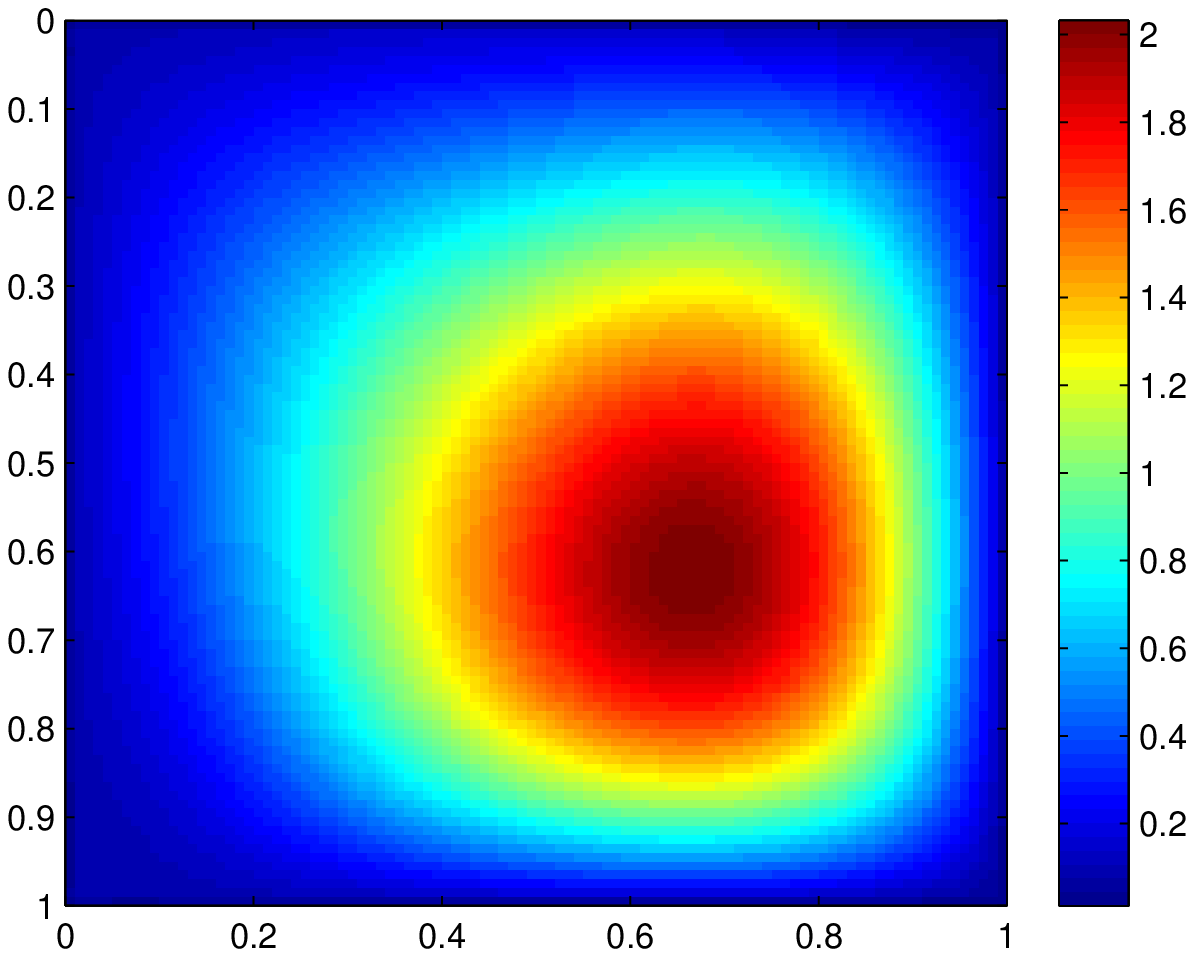}}
\subfigure[ref-variance-control]{\includegraphics[width=1.5in, height=1.7in,angle=0]{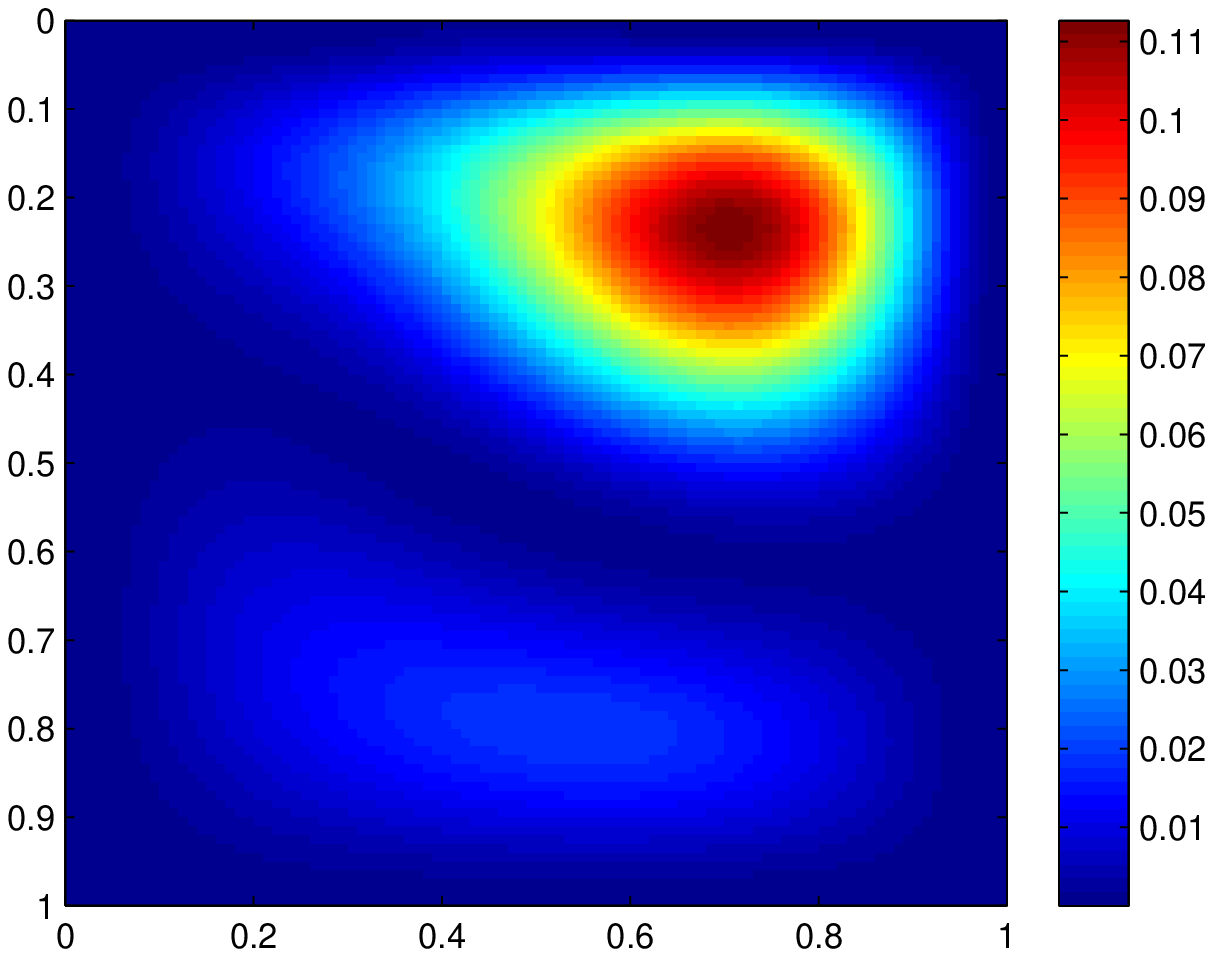}}
\subfigure[mean of state]{\includegraphics[width=1.5in, height=1.7in,angle=0]{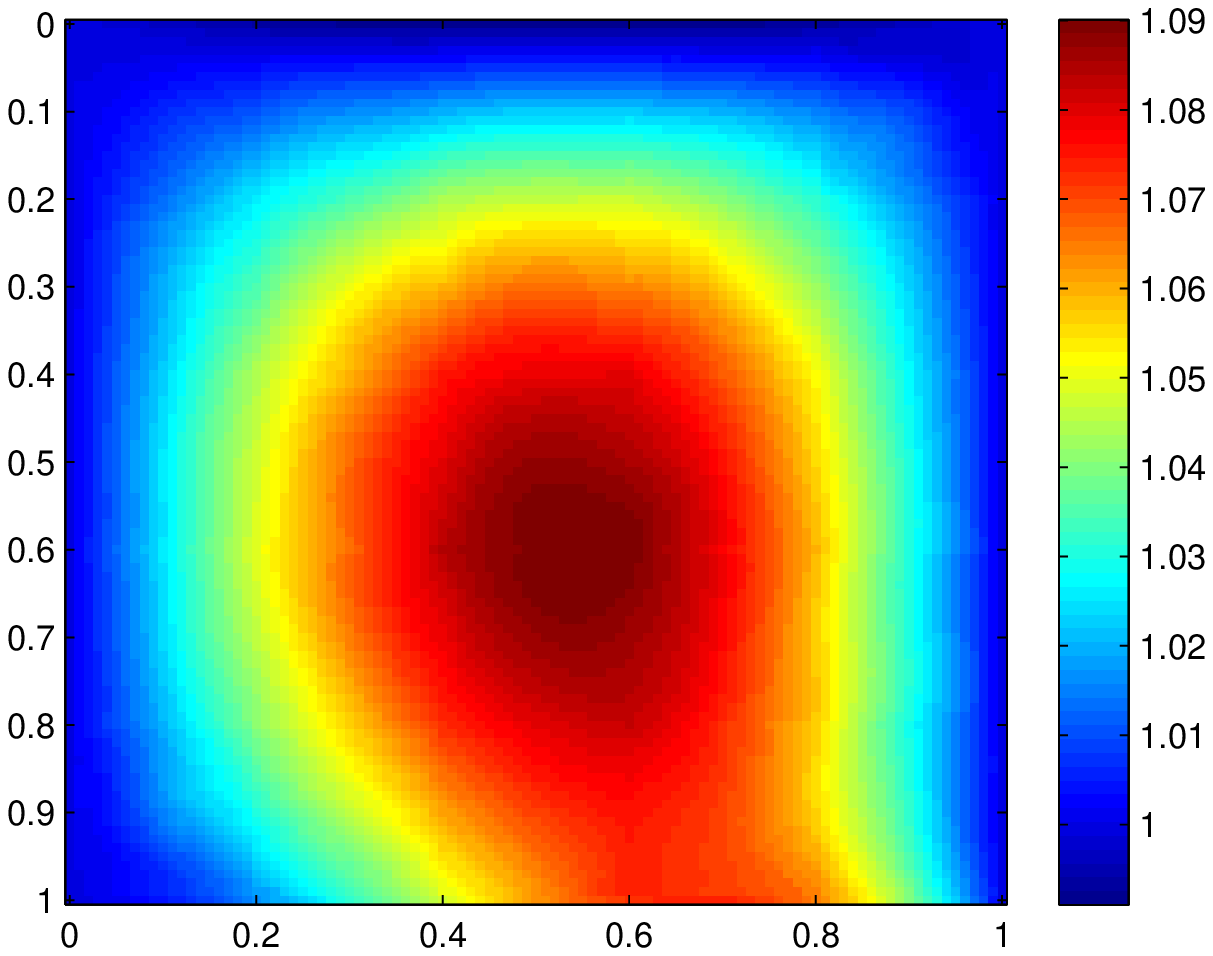}}
\subfigure[variance of state]{\includegraphics[width=1.5in, height=1.7in,angle=0]{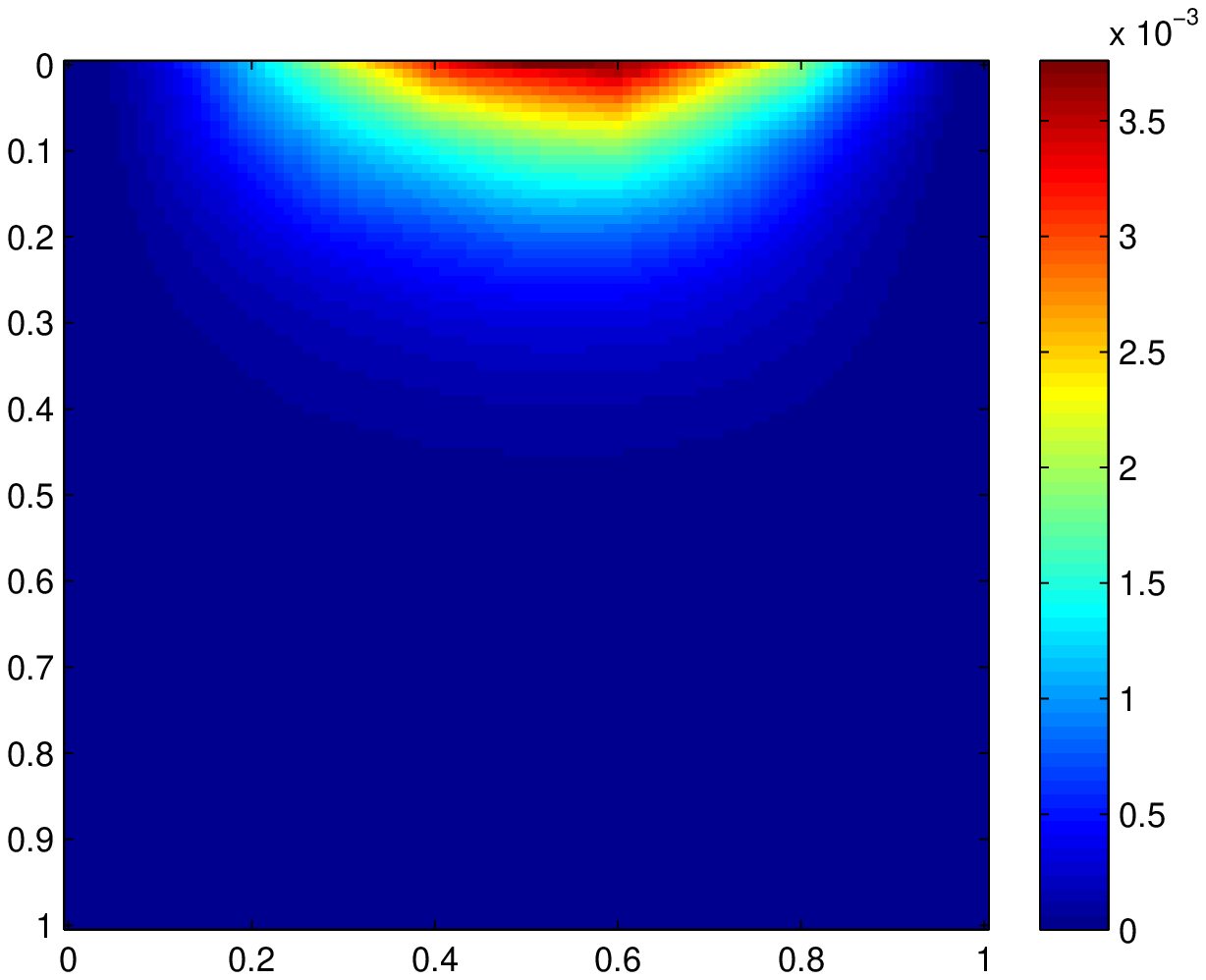}}
\subfigure[mean of control]{\includegraphics[width=1.5in, height=1.7in,angle=0]{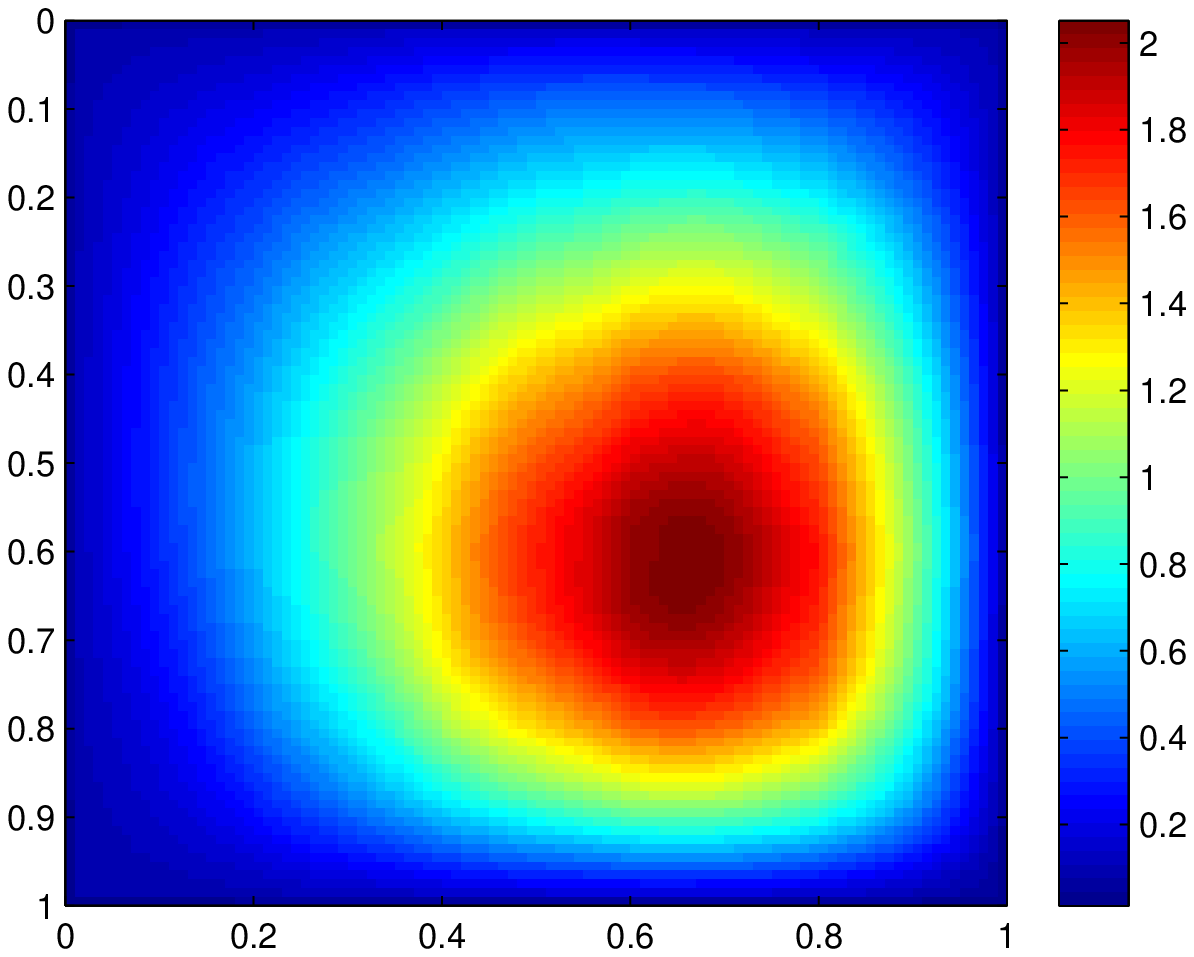}}
\subfigure[variance of control]{\includegraphics[width=1.5in, height=1.7in,angle=0]{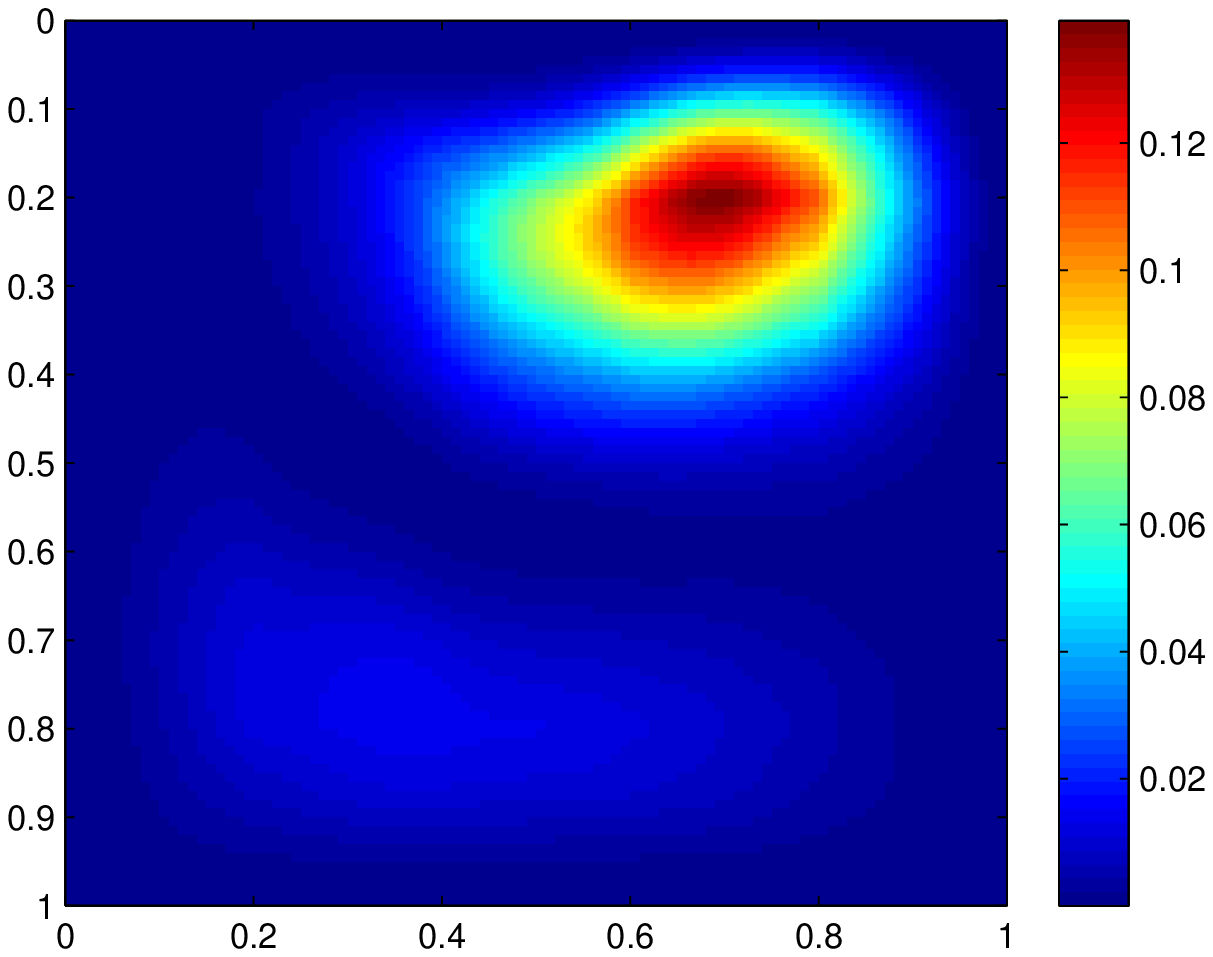}}
\caption{\em{Mean and variance of reference solutions of (\ref{model-rdom1}) (1st row).
Mean and variance of local-global reduced solutions (2nd row).}}
\label{ex-var}
\end{figure}
In Table \ref{Table-greedy-L} and Table \ref{Table-greedy-H}, we list the number of optimal parameter samples for global model reduction method versus the relative error in the sense of $L^{2}$-norm and energy norm, respectively, for the state, control and adjoint variables.
\begin{table}[!htbp]\center
\renewcommand\arraystretch{1.2}
\scriptsize
  \caption{\em{Average relative $L^{2}$ errors with different number of optimal parameter samples.}}
  \label{Table-greedy-L}
    \begin{tabular}{c| c c c c c c c}
    \hline
    $N_{max}$ & $2$      & $3$       & $4$       & $5$       &$6$        &$7$ &$8$\\
   \hline
    $e^{2}_{u}$            & 4.757E-03 & 4.264E-03 & 2.901E-03 & 1.373E-03 & 7.852E-04 & 7.610E-04 & 7.060E-04 \\
    $e^{2}_{f}$            & 1.807E-01 & 1.816E-01 & 1.553E-01 & 3.908E-02 & 2.764E-02 & 2.698E-02 & 2.485E-02 \\
    $e^{2}_{\lambda}$      & 1.808E-01 & 1.743E-01 & 1.663E-01 & 3.927E-02 & 2.704E-02 & 2.604E-02 & 2.422E-02 \\
    \hline
    \end{tabular}
\end{table}%
\begin{table}[!htbp]\center
\renewcommand\arraystretch{1.2}
\scriptsize
  \caption{\em{Average relative $H^{1}$ errors with different number of optimal parameter samples.}}
  \label{Table-greedy-H}
    \begin{tabular}{c| c c c c c c c}
    \hline
    $N_{max}$ & $2$      & $3$       & $4$       & $5$       &$6$        &$7$        &$8$\\
   \hline
    $e^{H}_{u}$        &5.992E-01 & 5.831E-01 & 3.908E-01 & 1.841E-01 & 1.533E-01 & 1.528E-01 & 1.510E-01 \\
    $e^{H}_{\lambda}$  &2.758E-01 & 2.606E-01 & 2.626E-01 & 1.270E-01 & 1.154E-01 & 1.147E-01 & 1.124E-01 \\
    \hline
    \end{tabular}
\end{table}%
We see that, for a fixed number of local basis functions (L=5), the accuracy  will improve as the number of
optimal parameter samples (the number of global basis functions $N_{max}$) increases gradually.
On the other hand, for the same optimal parameter samples ($N_{max}$=5), Table \ref{Table-basis-L} shows that
more  local basis functions render a better approximation.

\begin{table}[!htbp]\center
\scriptsize
  \caption{\em{Average relative $L^{2}$ errors for local-global model reduction method vs. different number of local basis functions.}}
  \label{Table-basis-L}
    \begin{tabular}{c| c c c c c c c c c}
    \hline
    $L$ & $2$      & $3$       & $4$       & $5$       &$6$        &$7$         \\
    \hline
    $u_{lg}$        & 1.863E-03 & 1.720E-03 & 1.442E-03 & 1.373E-03 & 1.352E-03 & 1.304E-03 \\
    $f_{lg}$        & 4.958E-02 & 4.397E-02 & 4.303E-02 & 3.908E-02 & 3.880E-02 & 3.832E-02 \\
    $\lambda_{lg}$  & 4.829E-02 & 4.349E-02 & 4.303E-02 & 3.927E-02 & 3.893E-02 & 3.851E-02 \\
    \hline
    \end{tabular}
\end{table}%

\begin{table}[!htbp]\center
\renewcommand\arraystretch{1.2}
\scriptsize
\caption{\em{Computational details for the local-global reduced model for the stochastic optimal control problem defined on the random domain.}}
\label{CPU-time-2}
    \begin{tabular}{l l| l l}
    \hline
    $\mathbf{Computation ~ setting}$                 &$ $       &$\mathbf{local-global~reduced~model}$  & $$\\
    \hline
    Number of FE dofs $\mathcal{N}$              &10201     & Number of RB dofs                & 25\\
    Number of parameters                         & 5        & Dofs reduction                   & 1216:1\\
    Error tolerance greedy $\epsilon^{*}_{tol}$  &$10^{-5}$ & Offline greedy time              & 1.704E+03 s \\
    Number of local basis functions                 & 5        & Offline time for snapshot spaces & 9.364E+02 s \\
    Number of test parameters                    &1000      & Online average time for optimal solutions   & 2.490E-02 s\\
    \hline
    \end{tabular}
\end{table}

In Table \ref{CPU-time-2}, we list the CPU time for the optimal control problem defined on the random domain with the high-fidelity model (FEM in fine grid) and the local-global reduced model. Before implementing the global reduced method, we need to compute the nested snapshots. As a sharp comparison, the local model reduction method only needs about 15 minutes to get the snapshot spaces, while the FE method in fine grid  requires more than  65 hours for the snapshots.  At the online stage, it  takes 2.121s to get the optimal solutions for per  parameter sample using the  FE method. However, the online average time is only $2.490\text{E}-02s$ per sample.
This shows that the local-global model reduction method  can significantly improve the computational efficiency for the stochastic optimal control problem defined on random domain.

\subsection{Stochastic optimal Neumann boundary control problem}
\label{Ex3}
Compared with   the  distributed control problems applied on the entire domain,
 boundary control problems have  the control applied only on the boundary.
Such boundary control problems are perhaps  more physically realistic because for real-world applications,
it may  be possible to only  control the physical property along  the boundary
of the domain. In this section, we consider  the following Neumann boundary control  problems using  local-global model reduction method, i.e.,
\begin{equation}
\label{model-bd}
\begin{cases}
\begin{split}
 &\min\limits_{u,g} J=\frac{1}{2}\|u(x,\mu)-\hat{u}(x,\mu)\|
 ^2_{\mathscr{L}^2(\Omega)}+\frac{\beta}{2}\|g(x)\|^2_{\mathscr{L}^2(\Omega)},\\
&s.t.~~-div (\kappa(x,\mu)\nabla u)=f(x) ~~\text{in}~\Omega\\
&\quad \quad \kappa(x,\mu)\frac{\partial u}{\partial n}=g(x) ~~\text{on}~\partial \Omega.
\end{split}
\end{cases}
\end{equation}
In the simulation, we set
\[
\kappa(x,\mu)= \exp(-\frac{(x_{1}-\mu_{1})^2}{4}-\frac{(x_{2}-\mu_{2})^2}{4}),\quad \hat{u}(x,\mu)=(x_{1}-\mu_{1})^2+(x_{2}-\mu_{2})^2.
 \]
The source term $f(x, \mu)$ is defined by
 \[
 f(x)=\frac{1}{2}\sin(\pi x_{1})\cos(2\pi x_{2})+x_{1}x_{2}+(\frac{x_{1}}{6}+\sin(\pi x_{2})+1)^2.
  \]
  Here the physical domain is still the unit square domain and $x=(x_{1},x_{2})$. The parameter $\mu=(\mu_{1},\mu_{2})$ and $\mu_{i} \sim Beta(1,1)$ ($i=1, 2$). Both functions $\kappa(x,\mu)$ and $\hat{u}(x,\mu)$ are not affine with respect to the parameter sample $\mu$, we  employ EIM to get affine approximations for $\kappa(x,\mu)$ and $\hat{u}(x,\mu)$.

For the simulation, we choose a uniform $100 \times 100$ fine grid to compute the reference solution for the Neumann boundary control problem. In the procedure of local model reduction, we set the coarse mesh size as $H=1/5$ and select $L=5$ multiscale basis functions at each coarse block. To construct the global RB spaces, we will select five optimal parameter samples by greedy algorithm. We set the number of training samples $n_{train}=100$ and take 2500 test samples to compute average errors and moments.

The expectation and standard deviation of the state variable $u$ are depicted  in
Fig.~\ref{ex-standard}, which shows that  the state approximation  using local-global reduced model matches the reference solution very well
for  both the mean and the standard deviation.
\begin{figure}[!h]
\centering
\subfigure[]{\includegraphics[width=1.55in, height=1.75in,angle=0]{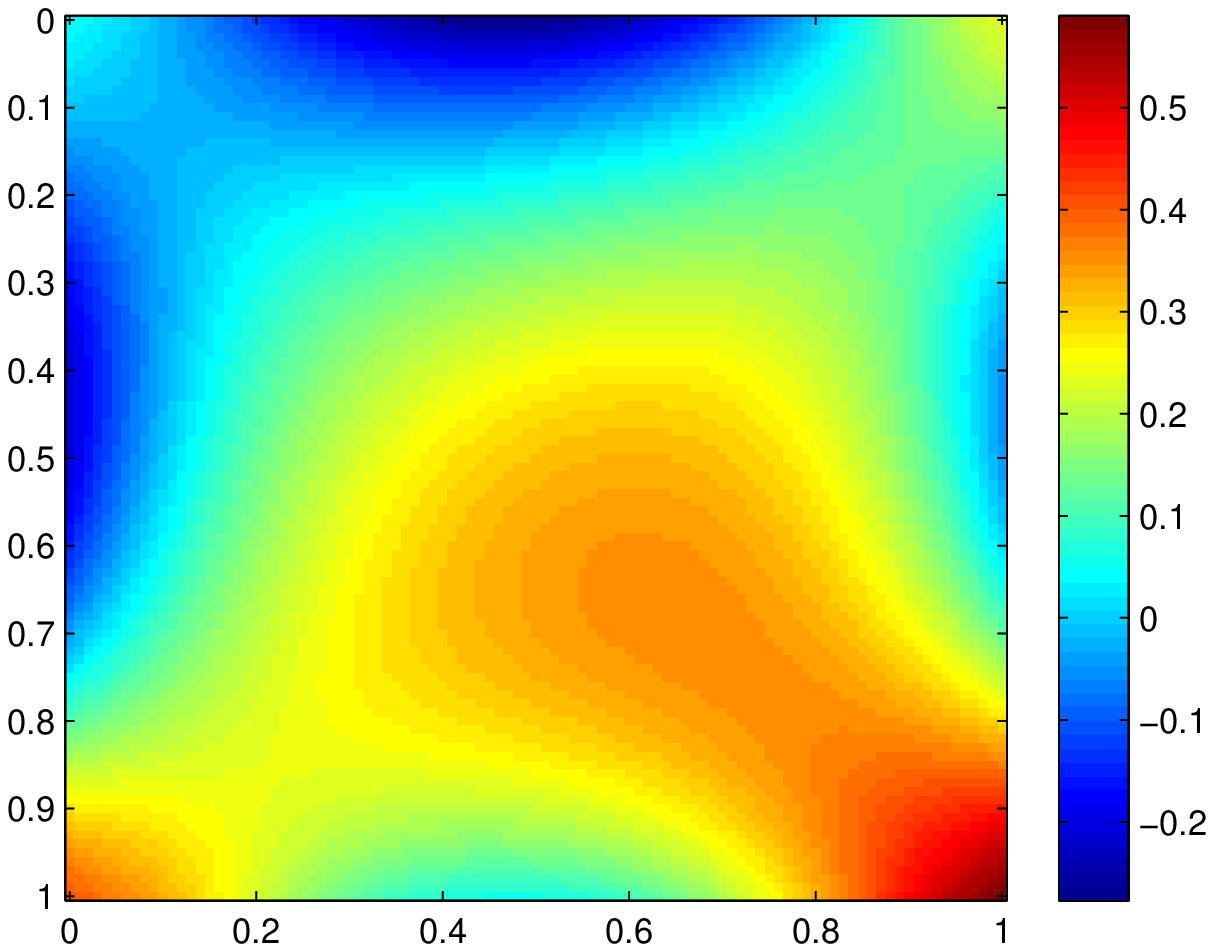}}
\subfigure[]{\includegraphics[width=1.55in, height=1.75in,angle=0]{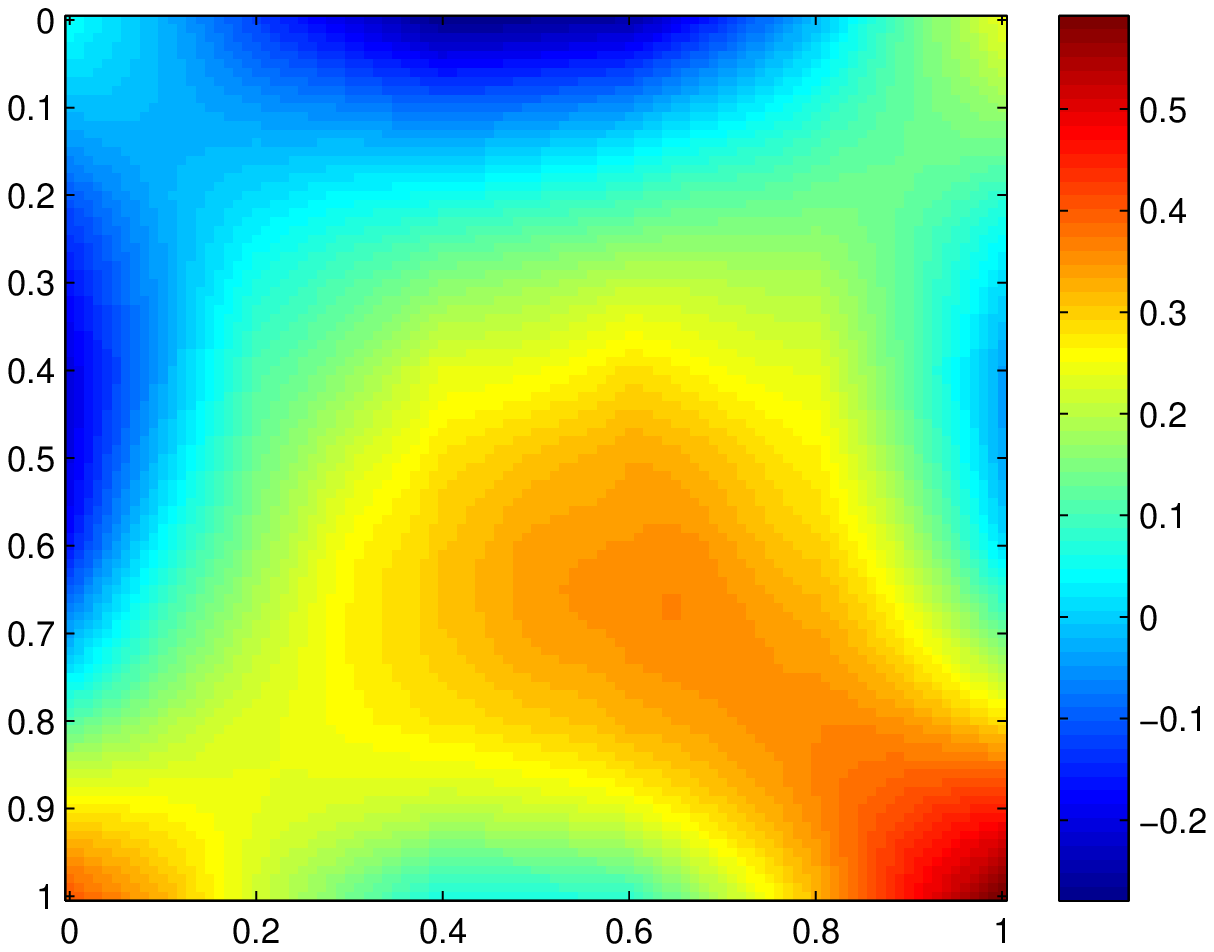}}
\subfigure[]{\includegraphics[width=1.55in, height=1.75in,angle=0]{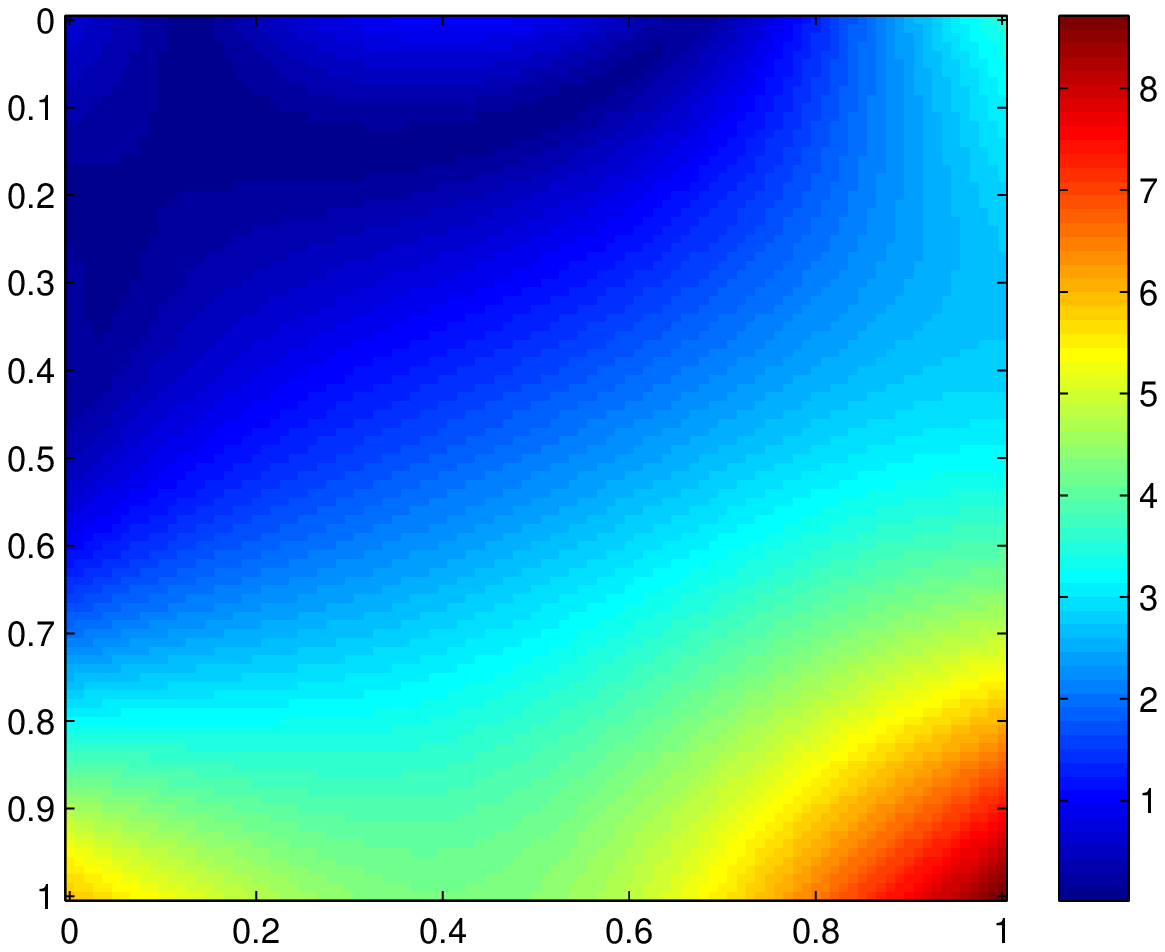}}
\subfigure[]{\includegraphics[width=1.55in, height=1.75in,angle=0]{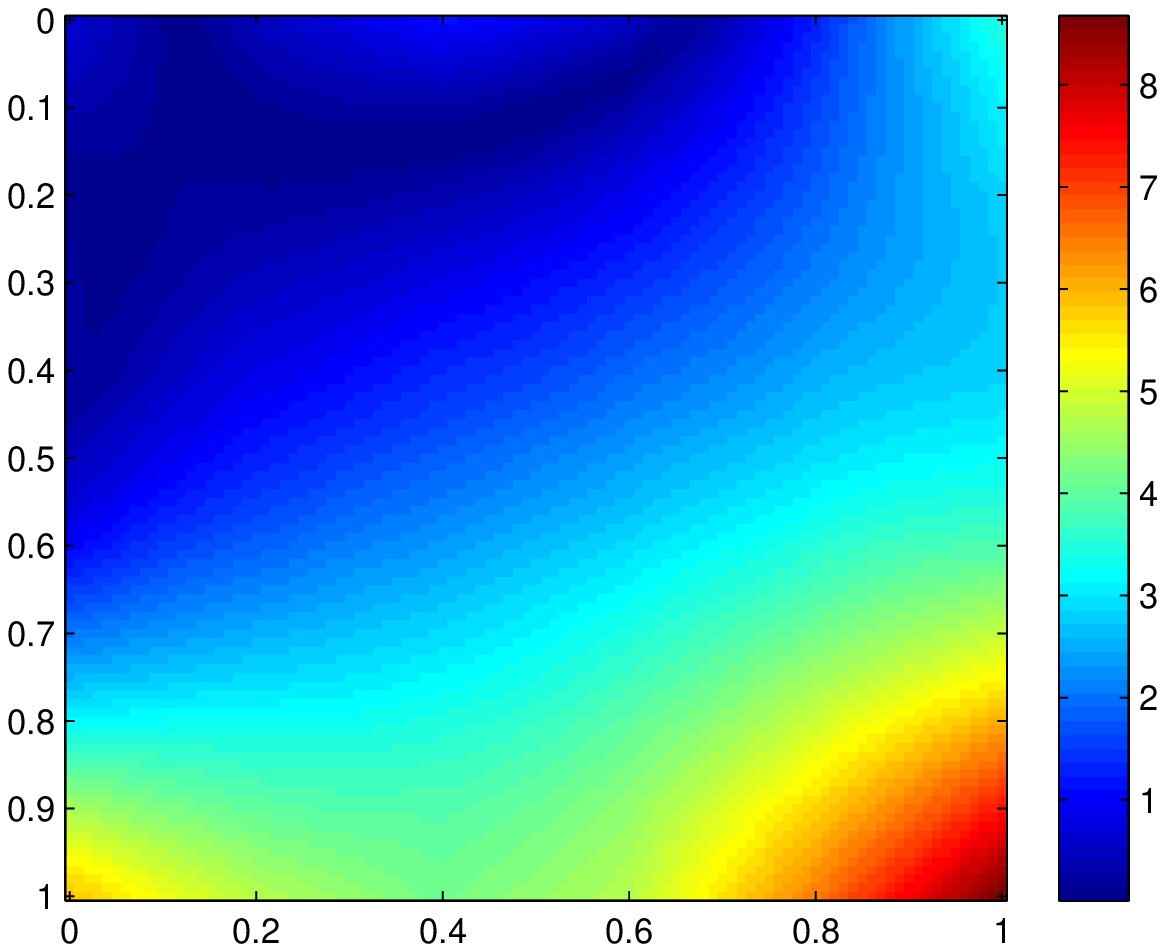}}
\caption{\em{(a) expectation of reference state; (b) expectation of state using local-global model reduction method; (c) standard deviations of state obtained by FE method;  (d) standard deviations of state using local-global model reduction method.}}
\label{ex-standard}
\end{figure}
In Fig.~\ref{bd-control}, we plot  the expectation $p_{lg}$ of the boundary control function $g$  using the  local-global reduced method.  We denote the expectation  and standard deviation of reference control by $p_{ref}$ and $\sigma_{ref}$, respectively. We see that the expectation of control variable $g$ using the local-global model reduction method  lies in the  region between the line $p_{ref}+\sigma_{ref}$ and the line $p_{ref}-\sigma_{ref}$.
This shows the credibility of the approximation based on  the model reduction.

\begin{figure}[!h]
\centering
\subfigure[Left boundary]{\includegraphics[width=1.58in, height=2.1in,angle=0]{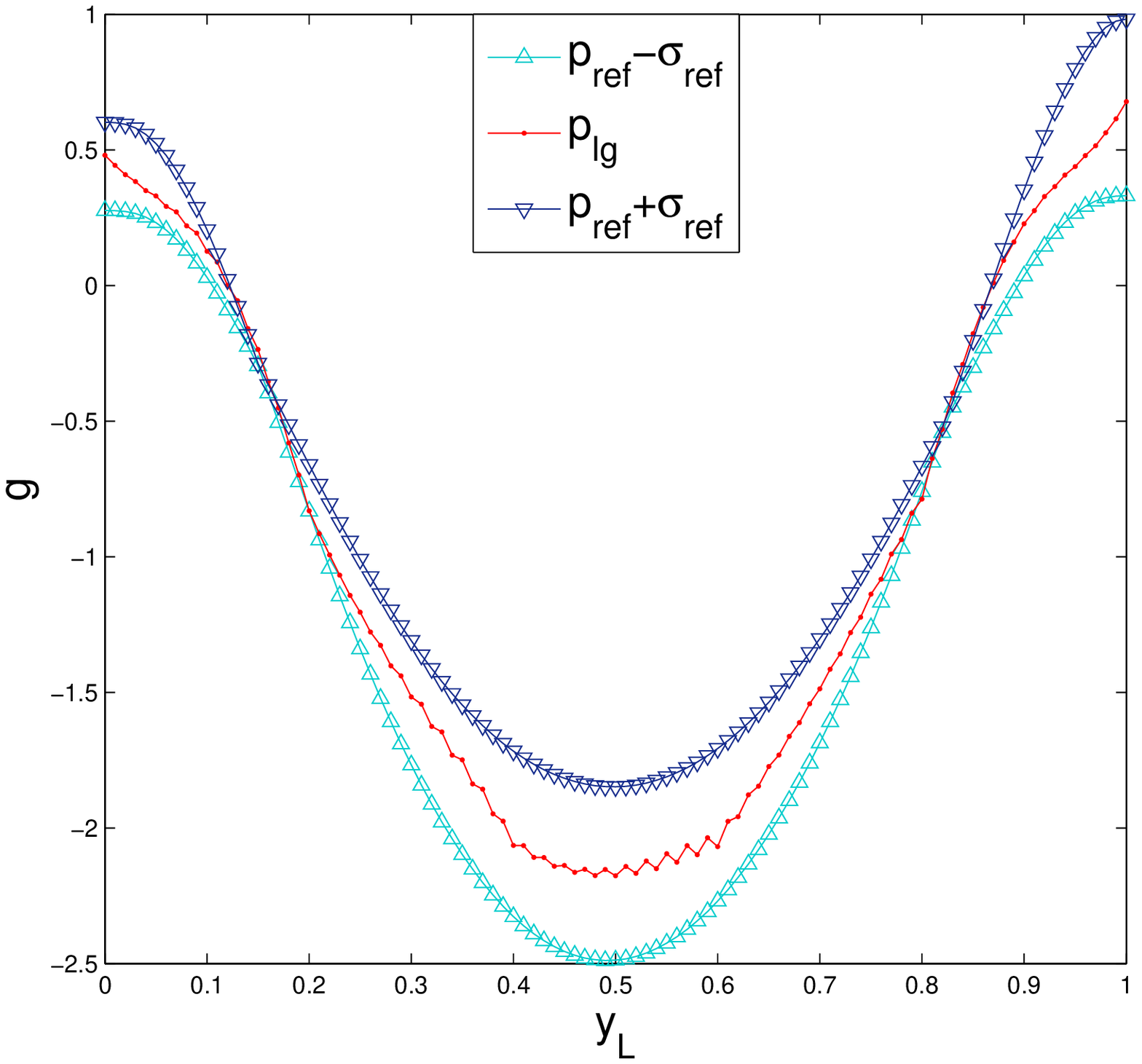}}
\subfigure[Bottom boundary]{\includegraphics[width=1.58in, height=2.1in,angle=0]{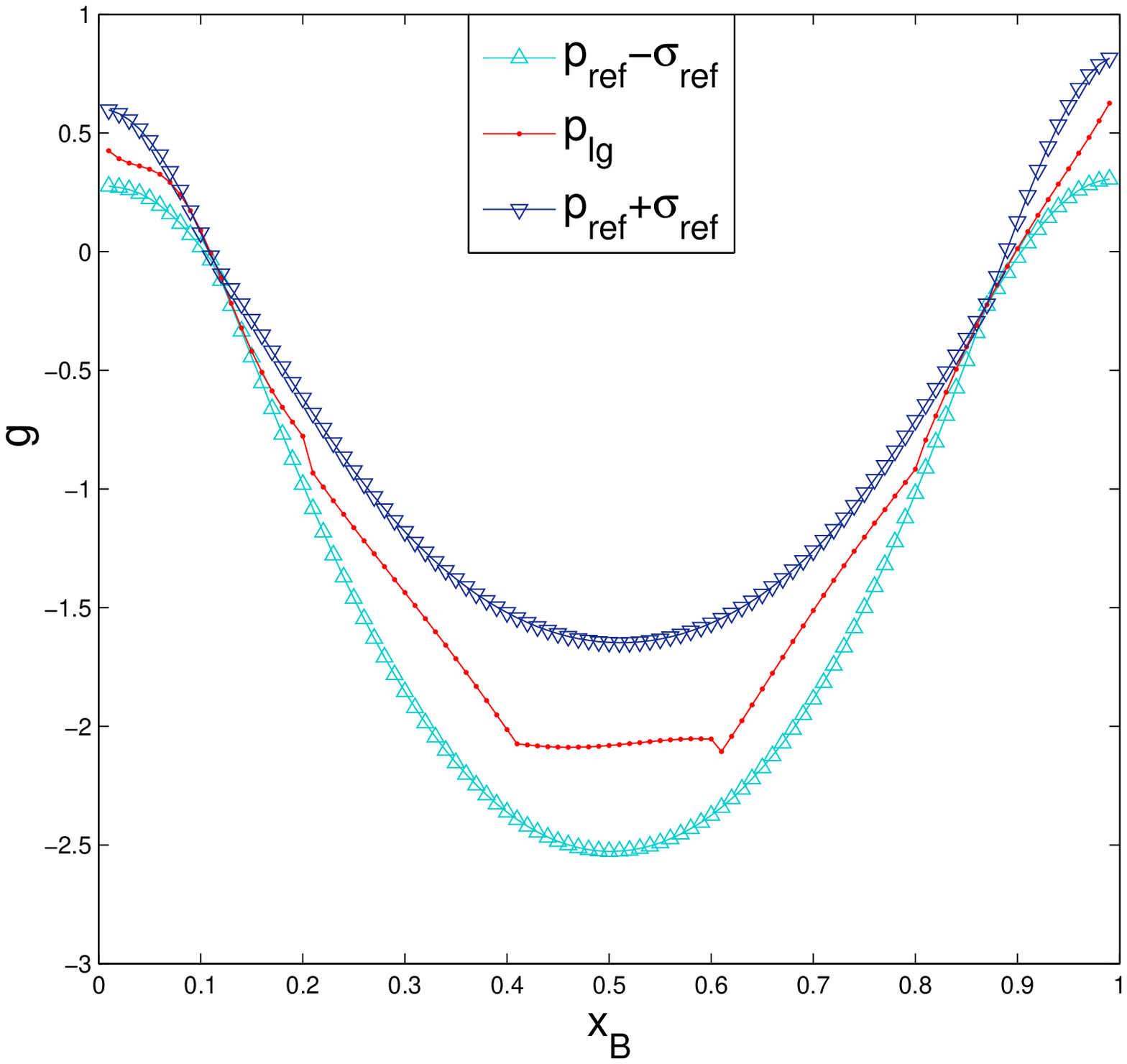}}
\subfigure[Upper boundary]{\includegraphics[width=1.58in, height=2.1in,angle=0]{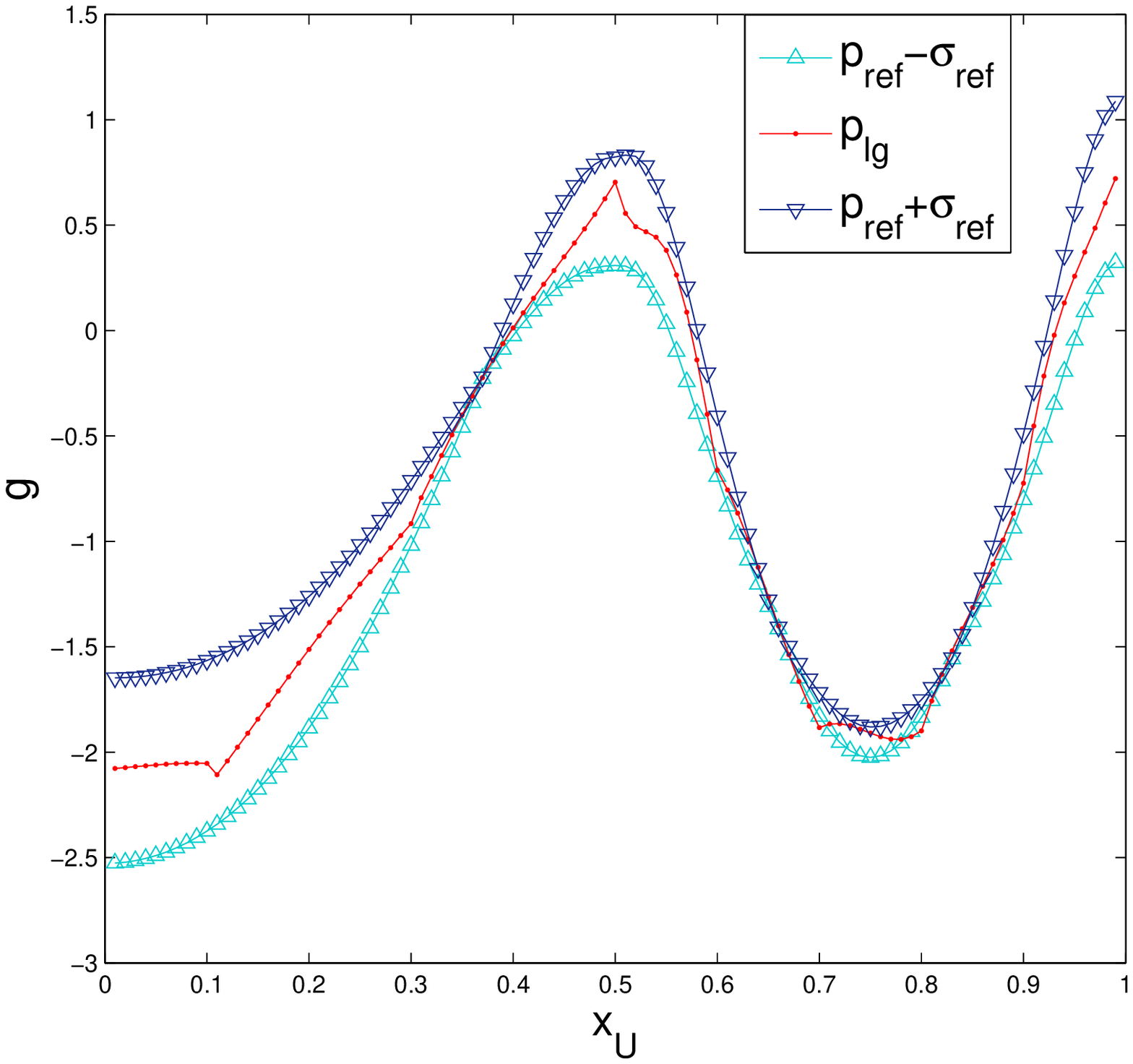}}
\subfigure[Right boundary]{\includegraphics[width=1.58in, height=2.1in,angle=0]{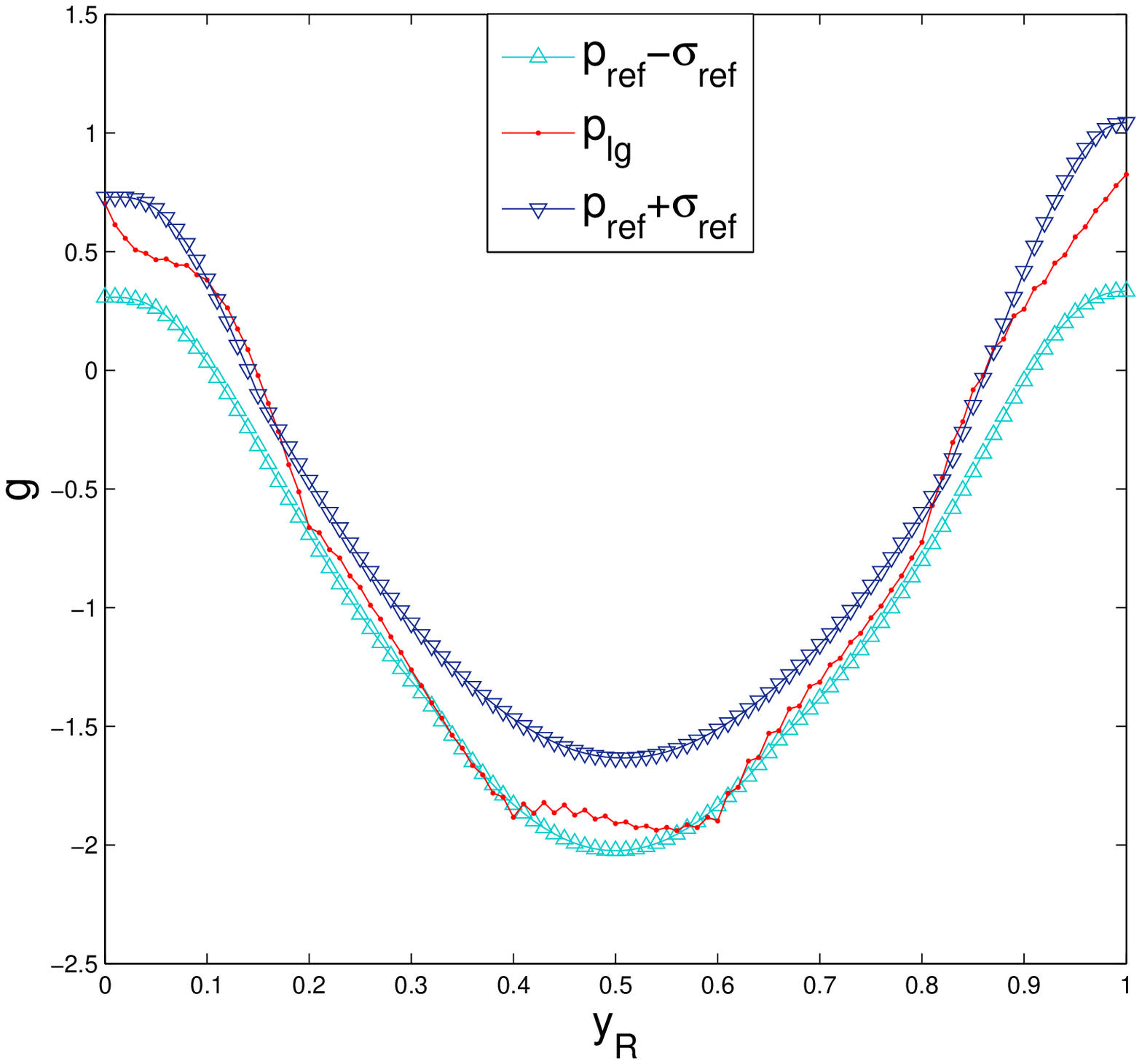}}
\caption{\em{Expectation $p$ and standard $\sigma$ of the Neumann boundary condition $g$ on the four boundaries of the domain $\Omega$.}}
\label{bd-control}
\end{figure}

We fix the regularization parameter $\beta$ with $\beta=10^{-4}$ and coarse mesh size with $H=1/5$. For different number of local basis functions at each coarse block and different number of global basis functions,
 we compute the difference between the state function and target function, and the minimum value of cost functional.  The results are listed in  Table \ref{first-cost}.
  From the table, we can find that the approximation of the cost functional is not very sensitive to the model fidelity for the boundary control problem.

\begin{table}[!htbp]\center
\renewcommand\arraystretch{1.4}
\scriptsize
\setlength{\belowcaptionskip}{5pt}
  \caption{\em{Numerical  results with different number of global basis functions $N_{max}$ and different number of local basis functions $L$.}}
   \label{first-cost}
    \begin{tabular}{c c c c c c}
    \hline
    \multirow{2}{2cm}{$\sharp~of~global~basis~functions$}&\multirow{2}{1cm}{}&\multirow{2}{1cm}{}{$L=3$}
    &\multirow{2}{1cm}{}&\multirow{2}{1cm}{}{$L=6$}&\multirow{2}{1cm}{}\\
    \cline{3-6}
    &$(N_{max})$& $\|u_{lg}(x,\mu)-\hat{u}(x,\mu)\|^{2}_{\mathscr{L}^{2}}$   &$J_{min}$ & $\|u_{lg}(x,\mu)-\hat{u}(x,\mu)\|^{2}_{\mathscr{L}^{2}}$   &$J_{min}$\\
   \hline
    2     && 1.099863E-01 & 5.531052E-02 & 1.098623E-01 & 5.524519E-02 \\
    4     && 1.098805E-01 & 5.525738E-02 & 1.097684E-01 & 5.519797E-02 \\
   \hline
    \end{tabular}
\end{table}%

To compare the computation efficiency,   we describe the computation setting and list the CPU time in Table \ref{CPU-time-3} when we use  the local-global model reduction method and FE method (time in the brackets).
By the table, we can find that the local-global model reduction method significantly improves the efficiency compared with standard FE method in fine grid.

\begin{table}[!htbp]\center
\renewcommand\arraystretch{1.2}
\scriptsize
\caption{\em{Comparison of the CPU time for Neumann boundary control problem with FE method and local-global model reduction method.}}
\label{CPU-time-3}
    \begin{tabular}{l l | l l}
    \hline
    $\mathbf{Computation~setting}$        &$ $    & $\mathbf{Local-global~reduced~model}$          &$$\\
    \hline
    Number of FE dofs $\mathcal{N}$     &10201  & Linear system size reduction              & 1100:1\\
    Number of optimal parameter samples & 4     & Offline greedy time                       & 2.910E+03 s (3.337E+03 s)\\
    Number of local basis functions     & 5     & Offline time for snapshot spaces          & 3.997E+02 s (5.397E+01 h)\\
    Number of test parameter samples    &2500   & Online average time for optimal solutions & 1.774E-01 s (1.131 s)\\
     \hline
    \end{tabular}
\end{table}

\section{Conclusion}
In this paper, we have presented a local-global model reduction method  for stochastic optimal control problems constrained by  PDEs. The possible  uncertainty  we considered in the paper  arises from the PDE coefficient, the target function,  the physical domain and the boundary condition.  We used reduced basis method and GMsFEM to develop the local-global model reduction.
We recast the optimal control problem into
   a stochastic saddle point formulation and proved the global existence and uniqueness for the stochastic optimal control solution.
   The local-global model reduction  is very suitable for many-query situations. This can significantly enhance the computation efficiency to solve the   stochastic optimal control problems.
   A few numerical examples have been carefully implemented for different stochastic optimal control problems: distributed control on deterministic domain and random domain, boundary control.
   The numerical  results showed the efficacy of the proposed model reduction method   and its  promising  application in  stochastic optimal control problems governed by complex models.

\section*{Acknowledgments}
We acknowledge the support of Chinese NSF 11471107.


\end{document}